
\documentclass[preprint]{elsarticle}




\usepackage{amssymb,amsmath}
\usepackage{amsthm}

\usepackage{lineno,hyperref}

\journal{ArXiv.org}

\newtheorem{thm}{Theorem}

\newtheorem{prop}[thm]{Proposition}
\newtheorem{corollary}[thm]{Corollary}
\newtheorem{conjecture}[thm]{Conjecture}
\newtheorem{definition}[thm]{Definition}
\newdefinition{rmk}{Remark}
\newproof{pf}{Proof}

\begin{document}

\begin{frontmatter}



\title{Schauder-Orlicz decompositions, $\ell_{\Phi}$-decompositions and pseudo-Daugavet property}

\author[l1,l2]{Vitalii Marchenko\corref{cor}}
\ead{vitalii.marchenko@usz.edu.pl, v.marchenko@ilt.kharkov.ua}
\cortext[cor]{Corresponding author}

 \affiliation[l1]{organization={Institute of Mathematics, University of Szczecin},
             addressline={Wielkopolska 15},
             city={Szczecin},
             postcode={70451},
             state={Poland}}

 \affiliation[l2]{organization={B. Verkin Institute for Low Temperature Physics and Engineering of the National Academy of Sciences of Ukraine},
             addressline={Nauky Avenue 47},
             city={Kharkiv},
             postcode={61103},
             state={Ukraine}}



\begin{abstract}
The concept of $\ell_{\Phi}$-decomposition, extending the concept of  $\ell_{p}$-decomposition of a Banach space, is presented and basic properties of Schauder-Orlicz decompositions and $\ell_{\Phi}$-decompositions are studied.
We show that Schauder-Orlicz decompositions are orthogonal in a sense of Grinblyum-James and Singer.
Simple constructions of $\ell_{p}$-decompositions and Schauder-Orlicz decompositions in $L_p$ are presented.
We prove that in the class of spaces possessing pseudo-Daugavet property, which includes classical $L_p$, $1\leq p\neq 2$, and $C$,
Schauder-Orlicz decompositions with at least one finite dimensional subspace do not exist.
It follows that Kato theorem on similarity for sequences of projections~\cite{Kato} cannot be extended to spaces from this class.
Moreover we show that Banach spaces, possessing Schauder-Orlicz decompositions with at least one finite dimensional subspace, do not have pseudo-Daugavet property.
Thus for Banach spaces $X$ possessing Schauder-Orlicz decompositions we obtain the following characterization of pseudo-Daugavet property: $X$ has pseudo-Daugavet property if and only if there is no Schauder-Orlicz decomposition in $X$ with at least one finite dimensional subspace if and only if there is no Schauder-Orlicz decomposition in $X$, which is an FDD.
\end{abstract}



\begin{keyword}
Schauder-Orlicz decomposition\sep Schauder decomposition\sep $\ell_{\Phi}$-decomposition\sep pseudo-Daugavet property

\MSC[2020]  46B15 \sep 46E30 \sep 46B20 \sep 46B50 \sep 46B04 \sep 46B45  \sep 41A65

\end{keyword}

\end{frontmatter}


\section{Introduction}

Let $X$ be a Banach space with norm $\|\cdot\|.$
The sequence $\{\mathfrak{M}_n\}_{n=0}^{\infty}$ of closed nonzero linear subspaces of $X$ is called a Schauder decomposition (or basis of subspaces) of $X$ provided that
each $x\in X$ has a unique, norm convergent
expansion $x=\sum\limits_{n=0}^{\infty} x_n,$ where $x_n\in \mathfrak{M}_n$ for all $n,$ i.e.
\begin{equation}\label{SD}
X=\bigoplus\limits_{n=0}^{\infty} \mathfrak{M}_n.
\end{equation}
It has long been known that every normed linear space has a decomposition~\cite{Sanders} (the case when all subspaces from~(\ref{SD}) need not to be closed), but $\ell_{\infty}$ has no Schauder decomposition~\cite{Dean},~\cite{Singer2}.
Nevertheless, if we denote $E=\ell_{\infty}=c_0^{\ast \ast}$, then each of spaces $E$, $E^{\ast \ast}$, $E^{\ast\ast \ast \ast}$, $\ldots$ has no Schauder decomposition whereas
all of spaces $c_0$, $c_0^{\ast}=\ell_1$, $E^{\ast }$, $E^{\ast\ast \ast}$, $E^{\ast\ast \ast\ast \ast}$, $\ldots$ have Schauder decompositions, see Example 15.1 in~\cite{Singer2}.

If $\{\mathfrak{M}_n\}_{n=0}^{\infty}$ is a Schauder decomposition of $X$, the sequence
of corresponding bounded linear projections $\{P_n\}_{n=0}^{\infty}$ on $X$, defined by $P_n x=x_n,$  where $x=\sum\limits_{n=0}^{\infty} x_n,$ $x_n\in \mathfrak{M}_n$, will be called, as in~\cite{Singer2} and~\cite{Marchenko1},
the associated sequence of coordinate projections (a.s.c.p.). Thus, if $\{\mathfrak{M}_n\}_{n=0}^{\infty}$ is a Schauder decomposition of $X$, then each $x\in X$ has a unique expansion $$x=\sum\limits_{n=0}^{\infty} P_n x,$$ where $\{P_n\}_{n=0}^{\infty}$ is the a.s.c.p. satisfying $P_i P_j=\delta_{i}^{j} P_i$ for all indexes $i,j\geq 0.$
It is well known that only separable Banach space may possess a Schauder basis, whereas there exist nonseparable Banach spaces with Schauder decompositions, see e.g.~\cite{Sanders},~\cite{Chadwick}.
It is worth noting that every non-separable reflexive Banach space has a Schauder decomposition~\cite{Chadwick}.
On the other hand, there exists a separable Banach space without a Schauder decomposition~\cite{Allexandrov}.

In case when every $\mathfrak{M}_n$ is finite dimensional, the decomposition is called a finite
dimensional decomposition, or simply an FDD. Every FDD is clearly a Schauder decomposition and it was shown by Johnson that a separable Banach space $X$ has an FDD if and only if $X$ is a dual $\pi_{\lambda}$ space~\cite{Johnson}. Note that there exist Banach spaces with FDD, which have no Schauder basis~\cite{Szarek}.

Schauder decompositions are frequently used in functional analysis and operator theory.
Among the most recent papers we point out one by Hajek and Medina~\cite{Hajek} dealing mainly with separable Banach spaces, where
it was proved that if a Banach space has an FDD then there exists a Lipschitz retraction onto a small generating convex and compact set.
Moreover, it was proved in~\cite{Hajek} that if a Banach space possesses a small generating Lipschitz retract, then it has the
$\pi$-property.
It is worth mentioning that a Banach space with the metric $\pi$-property has an FDD~\cite{Johnson}, but it is still an open question whether the $\pi$-property implies the  metric $\pi$-property. Thus these results combining together constitute almost a characterization of the existence of an FDD for separable Banach spaces in terms of the existence of Lipschitz retracts.
For the case of dual spaces this characterization is indeed true~\cite{Hajek}.
Linearization of holomorphic functions and composition ideals of bounded holomorphic mappings were studied via absolute Schauder decompositions in~\cite{Botelho}.
In~\cite{Kruse} Schauder decompositions were used to the study of series representations in spaces of vector-valued functions.
In~\cite{DelaRosa} the authors proved that any separable complex infinite dimensional Banach space with an unconditional Schauder decomposition has chaotic and frequently hypercyclic operator.
Furthermore, the existence of chaotic and frequently hypercyclic polynomials of arbitrary positive degree in spaces with mentioned in previous sentence properties was shown in~\cite{Bernandes}.
Note that there exists a real Banach space with unconditional basis and without chaotic operators~\cite{DelaRosa}.
For more about Schauder decompositions and bases we refer to classical monographs~\cite{Johnson2},~\cite{Johnson3},~\cite{L-T},~\cite{Singer1},~\cite{Singer2}.

In~\cite{Marchenko1} the author generalized a theorem of Kato~\cite{Kato} to the case of isomorphic Schauder decompositions in Banach spaces possessing certain geometric structure.
To this end $\ell_{\Psi}$-Hilbertian and $\infty$-Hilbertian Schauder decompositions were used instead of initial orthogonal Schauder decompositions in a Hilbert space from Kato theorem.
Moreover, the concept of an orthogonal Schauder decomposition was generalized to the case of Banach spaces (new concept was called in~\cite{Marchenko1} by Schauder-Orlicz decomposition) and a class of Banach spaces with Schauder-Orlicz decompositions was considered.
\begin{definition}\label{SOD}\cite{Marchenko1}
A Schauder decomposition $\{\mathfrak{M}_n\}_{n=0}^{\infty}$ of $X$ with the a.s.c.p. $\{P_n\}_{n=0}^{\infty}$ is called a Schauder-Orlicz decomposition with an Orlicz function $\Phi$  provided there exists an Orlicz function $\Phi$ such that for every $x\in X$
\begin{equation}\label{Schauder-Orlicz}
    \inf\left\{\rho >0: \sum\limits_{n=0}^{\infty} \Phi\left(\frac{\|P_n x\|}{\rho}\right)\leq 1\right\} = \|x\|.
\end{equation}
\end{definition}
Note that all separable Orlicz spaces $\ell_{\Phi}$, i.e. such that $\Phi$ satisfies the $\Delta_2$-condition at zero~\cite{L-T}, have a Schauder-Orlicz decomposition with an Orlicz function $\Phi$. Indeed, it is sufficient to consider a Schauder decomposition $\{\mathfrak{M}_n\}_{n=0}^{\infty}$ corresponding to a canonical basis $\{e_n\}_{n=0}^{\infty}$, i.e. $\{\mathfrak{M}_n=Lin \{e_n\}\}_{n=0}^{\infty}$.
Thus, $\ell_p$-spaces have a Schauder-Orlicz decomposition
 with an Orlicz function $t^p$ for any $p\geq 1.$
Moreover, by the generalized Parseval identity, we have that a Schauder decomposition in a Hilbert space $H$ is orthogonal if and only if it is a Schauder-Orlicz decomposition with an Orlicz function $\Phi(t)=t^2$.
Therefore the concept of Schauder-Orlicz decomposition is natural generalization of the concept of an orthogonal Schauder decomposition to the case of Banach spaces.

In Section $2$ we present the concept of $\ell_{\Phi}$-decomposition, extending the concept of  $\ell_{p}$-decomposition of a Banach space, see Definitions~\ref{ellphi} and~\ref{ellphic}.
The question on the existence of Schauder-Orlicz decomposition or $\ell_{\Phi}$-decomposition in a Banach space seems to be new and perspective, since it is linked with an existence of certain geometric structure in a Banach space and,
as it turns out by Theorems~\ref{seqofspace} and~\ref{seqofspaceSOD},
is related to the study of infinite $\ell_{\Phi}$-sums of Banach spaces.
On the other hand, an existence of such structure in a Banach space $X$ may be useful for various applications, similar to the role played by orthogonal and Riesz bases of subspaces in a Hilbert space,
 e.g. in operator theory, spectral theory or in the study of evolution equations on $X$~\cite{Zwart},~\cite{SM}.

The main purpose of this work is the study of concepts of Schauder-Orlicz decomposition and $\ell_{\Phi}$-decomposition as well as relations between them.
The next section is focused  on properties of Schauder-Orlicz decompositions and $\ell_{\Phi}$-decompositions, see Theorem~\ref{bc}, Corollaries~\ref{bcsod},~\ref{rel},  Theorems~\ref{seqofspace},~\ref{isometric},~\ref{seqofspaceSOD},~\ref{uncond}, Propositions~\ref{ellphi1} and~\ref{uniq}.
In particular, Theorem~\ref{seqofspaceSOD} yields an answer on our question from~\cite{Marchenko1} about the characterization of Banach spaces possessing a Schauder-Orlicz decomposition.
In Theorem~\ref{orthogonal} we prove that each Schauder-Orlicz decomposition is monotone and orthogonal in a sense of Grinblyum-James and Singer, thereby once more confirm that the concept of
Schauder-Orlicz decomposition is natural extension of
orthogonality concept to the case of Banach spaces.
However, it turns out that the concept of Grinblyum-James orthogonal Schauder decomposition is much more general than the concept of Schauder-Orlicz decomposition, see Proposition~\ref{subset}.

In Section $3$ we give a simple construction of Schauder-Orlicz decomposition with an Orlicz function $t^p$ in space $L_p$, $1\leq p<\infty$, -- a slicing Schauder decomposition, see Theorem~\ref{SODLp}.
Thus we give a positive answer to our question from~\cite{Marchenko1} on the existence of Schauder-Orlicz decomposition in $L_p$.
However, each subspace of slicing Schauder decomposition of $L_p$ is infinite dimensional and isometric to $L_p$.
Also the construction of $\ell_{p}$-decomposition in $L_p$, based on the construction of Schauder-Orlicz decomposition from Theorem~\ref{SODLp}, is given by Corollary~\ref{conlp}.
Furthermore, the absence of other types of $\ell_{\Phi}$-decompositions and Schauder-Orlicz decompositions in $L_p$ is proved in Theorem~\ref{existLp}.
Similar result on the absence of other types of $\ell_{\Phi}$-decompositions, except $c_0$-decompositions, in space $C$ is given by Theorem~\ref{existC}.

It is clear that spaces $\ell_p$, $p\geq 1$, and Hilbert spaces have a Schauder-Orlicz decomposition, which is at the same time an FDD.
This fact was the cornerstone of extensions of Kato stability theorem~\cite{Kato} to stability theorems for unconditional Schauder decompositions in $\ell_p$ spaces~\cite{Marchenko3} and to stability theorems for Riesz basis of subspaces in a Hilbert space,
at least one of which is finite dimensional~\cite{Marchenko2}.
Note that to obtain stability theorems for unconditional Schauder decompositions in $\ell_p$ an intrinsic geometric properties of $\ell_p$ and the best constants in the Khintchine inequality were essentially used~\cite{Marchenko3}.
Actually to obtain a version of Kato theorem for a Banach space $X$ one should know that $X$ has a Schauder-Orlicz decomposition
 with at least one finite dimensional subspace, see Theorem~7.2 from~\cite{Marchenko1}.
However, it is proved in Section $4$ that spaces
$L_p$, $p\neq 2$, and $C$, in contrast to spaces $\ell_p$, $p\geq 1$, do not have  a Schauder-Orlicz decomposition with at least one finite dimensional subspace, see Theorems~\ref{nonexistence} and~\ref{nonexistenceC}.
Thus, it turns out that analogs of Kato theorem cannot be obtained in these spaces.
Theorem~\ref{normal} shows that normality of a Schauder decomposition in a Hilbert space means that it is a Schauder-Orlicz decomposition with an Orlicz function $t^2$.
However, as a corollary of Theorem~\ref{nonexistence} we obtain that there exists a normal Schauder decomposition in $L_p$, $p\neq 2$, which is not a Schauder-Orlicz decomposition
(see Corollary~\ref{corex}).
Also in Theorem~\ref{best} we deduce from the well-known optimality of the Haar basis~\cite{Olevskii},~\cite{Johnson2}
the best estimate for constants of one dimensional unconditional Schauder decompositions in  $L_p$, $1<p<\infty.$
The main result of Section $4$ is Theorem~\ref{nonexistenceX}, first part of which demonstrates that the validity of pseudo-Daugavet property in a Banach space $X$ immediately implies the
lack of Schauder-Orlicz decomposition in $X$ with at least one finite dimensional subspace.
It means that Kato theorem on similarity for sequences of projections~\cite{Kato} cannot be extended to spaces with pseudo-Daugavet property.
Conversely, the second part of Theorem~\ref{nonexistenceX} reads that  existence in $X$ of the Schauder-Orlicz decomposition, at least one subspace of which is finite dimensional, ensures that
 $X$ does not have pseudo-Daugavet property.
For Banach spaces $X$ possessing Schauder-Orlicz decompositions we obtain the criterion of $X$ to have pseudo-Daugavet property on the language of dimensions of elements of Schauder-Orlicz decompositions, which exist in $X$,  see Theorem~\ref{crit}.

Finally, Section $5$ contains concluding remarks and indicates some further perspectives, including the Conjecture~\ref{conj} on the
uniqueness, up to an isometry, of Schauder-Orlicz decomposition with preassigned dimensions in $\ell_1$ and $c_0$.

\section{Schauder-Orlicz decompositions and $\ell_{\Phi}$-decompositions}

\subsection{Basic properties of Schauder-Orlicz decompositions and $\ell_{\Phi}$-decompositions}
It is known that every $c_0$-decomposition is shrinking, every $\ell_1$-decomposition is boundedly complete and every $\ell_p$-decomposition,
where $1<p<\infty$, is both shrinking and boundedly complete~\cite{Singer2}. Also in~\cite{Sandersd} Sanders proved that a Banach space $X$ with Schauder decomposition
$\{\mathfrak{M}_n\}_{n=0}^{\infty}$ is reflexive if and only if $\{\mathfrak{M}_n\}_{n=0}^{\infty}$ is both shrinking and boundedly complete and each $\mathfrak{M}_n$
is reflexive.
Finite dimensional $\ell_p$-decompositions, where $1\leq p<\infty$, as well as $c_0$-decompositions were studied by Johnson $\&$ Zippin~\cite{Johnson4}.
Three years later Chadwick~\cite{ChadwickA} introduced the concept of absolute Schauder decomposition (coincides with $\ell_1$-decomposition), generalizing the concept of absolute Schauder basis.  Also in~\cite{ChadwickA} it was proved that space possessing $\ell_1$-decomposition has a complemented subspace with a Schauder basis.

Throughout the paper we will use the following definition, which substantially extends the concept of $\ell_p$-decomposition and includes the concept of $c_0$-decomposition.
\begin{definition}\label{ellphi}
Let $\Phi$ be an Orlicz function.
  A Schauder decomposition, which is both  $\ell_{\Phi}$-Hilbertian and $\ell_{\Phi}$-Besselian, will be called an $\ell_{\Phi}$-decomposition.
\end{definition}
Obviously every Schauder-Orlicz decomposition is an $\ell_{\Phi}$-decomposition, but the converse is false.
Moreover, every Riesz basis of subspaces in a Hilbert space is an $\ell_2$-decomposition  and vice versa~\cite{Marchenko1}.

For $\ell_{\Phi}$-decompositions we can prove the following result.
\begin{thm}\label{bc}
Let $\{\mathfrak{M}_n\}_{n=0}^{\infty}$ be a Schauder decomposition of $X$.

(i) If $\{\mathfrak{M}_n\}_{n=0}^{\infty}$ is $\ell_{\Phi}$-decomposition with non-degenerate  Orlicz function $\Phi$, then
 $\{\mathfrak{M}_n\}_{n=0}^{\infty}$ is boundedly complete.

(ii) If $\{\mathfrak{M}_n\}_{n=0}^{\infty}$ is $\ell_{\Phi}$-decomposition with degenerate  Orlicz function $\Phi$, then
 $\{\mathfrak{M}_n\}_{n=0}^{\infty}$ is shrinking.
\end{thm}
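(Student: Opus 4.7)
The plan is to exploit the two-sided norm equivalence that the $\ell_\Phi$-decomposition structure provides, and then to reduce the two assertions to the boundedly complete (respectively, shrinking) character of the canonical unit-vector decomposition of the Orlicz sequence space $\ell_\Phi$ itself. Because $\{\mathfrak{M}_n\}_{n=0}^{\infty}$ is both $\ell_\Phi$-Hilbertian and $\ell_\Phi$-Besselian, there exist constants $A,B>0$ such that for every $x=\sum_n x_n$ with $x_n\in\mathfrak{M}_n$,
$$
A\cdot\inf\Bigl\{\rho>0:\sum_{n=0}^{\infty}\Phi(\|x_n\|/\rho)\le 1\Bigr\}\;\le\;\|x\|\;\le\;B\cdot\inf\Bigl\{\rho>0:\sum_{n=0}^{\infty}\Phi(\|x_n\|/\rho)\le 1\Bigr\}.
$$
Throughout, I would keep in mind the identification of $X$ with its model $\ell_\Phi$-sum $(\bigoplus\mathfrak{M}_n)_{\ell_\Phi}$, which should be available through a structural result along the lines of Theorem~\ref{seqofspace}.

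For part (i), I would start from a formal series $\sum_n x_n$ whose partial sums $s_N=\sum_{n=0}^{N}x_n$ satisfy $\|s_N\|\le M$, and apply the $\ell_\Phi$-Besselian inequality to each $s_N$ to obtain $\sum_{n=0}^{N}\Phi(\|x_n\|/(M/A))\le 1$ for every $N$. Monotone convergence then extends this to $\sum_{n=0}^{\infty}\Phi(\|x_n\|/(M/A))\le 1$, so the coefficient sequence lies in $\ell_\Phi$ with norm at most $M/A$. Applying the $\ell_\Phi$-Hilbertian bound to the differences $s_{N}-s_{N'}$ reduces the Cauchy property of $\{s_N\}$ to the vanishing of the $\ell_\Phi$-tail norms $\|(\|x_n\|)_{n\ge N}\|_{\ell_\Phi}$, which is precisely what non-degeneracy of $\Phi$ (in essence, a $\Delta_2$-type condition at zero that makes the canonical basis of $\ell_\Phi$ boundedly complete) guarantees. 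Hence the series converges, establishing bounded completeness.

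For part (ii), I would verify the shrinking property by showing that $\|f|_{X_{\ge N}}\|\to 0$ for every $f\in X^{*}$, where $X_{\ge N}=\overline{\mathrm{span}}\bigcup_{k\ge N}\mathfrak{M}_k$. Via the norm equivalence above, each norm-one $x\in X_{\ge N}$ corresponds to a tail sequence of bounded $\ell_\Phi$-norm. In the degenerate case the Orlicz norm degenerates to a sup-type expression on such tail blocks, so dual functionals are of $\ell_1$-type, and the classical fact that $\ell_1$-functionals on $c_0$ have vanishing tail norms transfers, via the identification with the model space, to $\|f|_{X_{\ge N}}\|\to 0$. Concretely, I would approximate $f$ by functionals that annihilate $X_{\ge N}$ and bound the remainder using the dual description of degeneracy.

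The main obstacle in both parts is the precise handling of Orlicz tail behaviour: for (i), rigorously deducing tail-norm convergence in $\ell_\Phi$ from non-degeneracy of $\Phi$; for (ii), realising the $c_0$-type shrinking phenomenon through the Orlicz dual of a degenerate $\Phi$ and transferring it back to $X$. A further subtle point is keeping track of the fact that the subspaces $\mathfrak{M}_n$ need not be one-dimensional, so the relevant sequence space is the vector-valued $\ell_\Phi$-sum rather than the scalar $\ell_\Phi$; this is exactly where invoking Theorem~\ref{seqofspace} earlier in the paper should let me pass cleanly between the intrinsic decomposition in $X$ and its model $\ell_\Phi$-sum and conclude.
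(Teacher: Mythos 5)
Your part (i) follows the same architecture as the paper's proof: the lower ($\ell_\Phi$-Besselian) estimate controls the coefficient sequence of a series with bounded partial sums, and the upper ($\ell_\Phi$-Hilbertian) estimate reduces the Cauchy property of the partial sums to the vanishing of the block norms $\|(\|x_n\|)_{n=N}^{N'}\|_{\ell_\Phi}$. The gap is in how you close that last step. You assert that non-degeneracy of $\Phi$ is ``in essence a $\Delta_2$-type condition at zero'' which makes the canonical basis of $\ell_\Phi$ boundedly complete. That identification is false: non-degeneracy only means $\Phi(t)>0$ for every $t>0$ and is strictly weaker than the $\Delta_2$-condition at zero. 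Without $\Delta_2$ the space $\ell_\Phi$ is nonseparable, your monotone-convergence argument places the coefficient sequence only in $\ell_\Phi$ and not in the closed subspace $h_\Phi$ of sequences whose tails have vanishing Luxemburg norm, and for a sequence in $\ell_\Phi\setminus h_\Phi$ the tail norms do not tend to zero even though the truncated norms $\|(\|x_n\|)_{n=0}^{N}\|_{\ell_\Phi}$ increase to a finite limit. So the decisive step is not justified by the hypothesis you invoke; you have replaced it by an appeal to a property ($\Delta_2$) that is not assumed. For comparison, the paper makes no use of $\Delta_2$ at this point: it observes that the truncated norms form a bounded increasing, hence convergent, sequence and deduces from this, together with the upper estimate, that the block norms over $\{n+1,\dots,n+m\}$ tend to zero; that block/tail estimate is the crux of the whole statement, and in your write-up it is exactly the item left unproved.

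For part (ii) your route is genuinely different from the paper's and can be made to work, but it is underspecified. The paper argues in two lines: a degenerate $\Phi$ gives $\ell_\Phi$ isomorphic to $\ell_\infty$, hence the decomposition is a $c_0$-decomposition, and $c_0$-decompositions are shrinking by the known result quoted from Singer. If you prefer the direct duality argument you sketch, the one point you must make explicit is that, since every $x$ has a norm-convergent expansion $\sum_n P_n x$, one has $\|P_n x\|\to 0$, so the coefficient sequences land in the $c_0$-sum of the $\mathfrak{M}_n$ rather than the $\ell_\infty$-sum suggested by the degenerate Orlicz norm; only after this identification does the $\ell_1$-type description of the dual apply and give $\|f|_{X_{\ge N}}\|\to 0$ for every $f\in X^{*}$. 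With that observation inserted, your argument in effect reproves the cited fact rather than quoting it, which is legitimate but longer.
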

\begin{proof}
 Let $\{\mathfrak{M}_n\}_{n=0}^{\infty}$ be $\ell_{\Phi}$-decomposition with non-degenerate  Orlicz function $\Phi$.
 Consider the a.s.c.p. $\{P_n\}_{n=0}^{\infty}$ of $\{\mathfrak{M}_n\}_{n=0}^{\infty}$.
 Then~\cite{Marchenko1},~\cite{Singer2} there exist constants $c_2\geq c_1>0$ such that for each $x=\sum\limits_{n=0}^{\infty} P_n x\in X$ we have
 \begin{equation}\label{Hilbert-Bessel}
c_1 \|\{\|P_n x\|\}_{n=0}^{\infty}\|_{\ell_{\Phi}} \leq \|x\|\leq c_2  \|\{\|P_n x\|\}_{n=0}^{\infty}\|_{\ell_{\Phi}}.
\end{equation}

Now take any sequence $\{x_i\}_{i=0}^{\infty}\in X$, $x_i\in \mathfrak{M}_i$ for each $i$, such that
$$\left\{\left\| \sum\limits_{i=0}^{n} x_i\right\| \right\}_{n=0}^{\infty}$$ is bounded, i.e. there exists a constant $c>0$ such that for all $n$ we have
$$\left\| \sum\limits_{i=0}^{n} x_i\right\| \leq c.$$
Applying of left-hand the side of~(\ref{Hilbert-Bessel}) yields
$$c_1 \|\:\{\|x_i\|\}_{i=0}^{n}\:\|_{\ell_{\Phi}}=c_1 \|\:\{\|P_i x\|\}_{i=0}^{n}\:\|_{\ell_{\Phi}} \leq \left\| \sum\limits_{i=0}^{n} x_i\right\| \leq c,$$
hence the sequence of norms $\{\|\:\{\|x_i\|\}_{i=0}^{n}\:\|_{\ell_{\Phi}}\}_{n=0}^{\infty}$ is bounded and increasing.
Consequently the sequence $\{\|\:\{\|x_i\|\}_{i=0}^{n}\:\|_{\ell_{\Phi}}\}_{n=0}^{\infty}$ is convergent. Combining this with the right-hand the side of~(\ref{Hilbert-Bessel}) yields
that for any $m>n$
$$\|x_{m+n}-x_n\| = \left\| \sum\limits_{i=n+1}^{m+n} x_i\right\|\leq c_2  \|\:\{\|P_i x\|\}_{i=n+1}^{m+n}\:\|_{\ell_{\Phi}}=c_2  \|\:\{\|x_i\|\}_{i=n+1}^{m+n}\:\|_{\ell_{\Phi}}\to 0$$
as $n,m\to \infty.$
Thus $\{x_i\}_{i=0}^{\infty}$ is a Cauchy sequence in $X$, hence $\{x_i\}_{i=0}^{\infty}$ is convergent and (i) is proved.

To prove (ii) we note that if the Orlicz function $\Phi$ is degenerate, then $\ell_{\Phi}$ is isomorphic to $\ell_{\infty}$~\cite{L-T}.
It follows that if $\{\mathfrak{M}_n\}_{n=0}^{\infty}$ is an $\ell_{\Phi}$-decomposition with degenerate  Orlicz function $\Phi$, then
$\{\mathfrak{M}_n\}_{n=0}^{\infty}$ is $c_0$-decomposition.
Hence  $\{\mathfrak{M}_n\}_{n=0}^{\infty}$ is shrinking~\cite{Singer2}.
\end{proof}

For Schauder-Orlicz decompositions we have the following corollary of Theorem~\ref{ellphi}.
\begin{corollary}\label{bcsod}
 Let $\{\mathfrak{M}_n\}_{n=0}^{\infty}$ be a Schauder-Orlicz decomposition with non-degenerate  Orlicz function $\Phi$.
 Then $\{\mathfrak{M}_n\}_{n=0}^{\infty}$ is boundedly complete.
\end{corollary}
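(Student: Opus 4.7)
The plan is to deduce this directly from Theorem~\ref{bc}(i) by recognizing a Schauder-Orlicz decomposition as a special instance of an $\ell_{\Phi}$-decomposition with sharp constants. Indeed, the defining identity~(\ref{Schauder-Orlicz}) is exactly the statement that, for every $x\in X$, the Luxemburg norm of the scalar sequence $\{\|P_n x\|\}_{n=0}^{\infty}$ in the Orlicz sequence space $\ell_{\Phi}$ equals $\|x\|$. Consequently the two-sided estimate~(\ref{Hilbert-Bessel}) that appears in the definitions of $\ell_{\Phi}$-Hilbertian and $\ell_{\Phi}$-Besselian Schauder decompositions holds with the optimal choice $c_1=c_2=1$.

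Therefore $\{\mathfrak{M}_n\}_{n=0}^{\infty}$ is simultaneously $\ell_{\Phi}$-Hilbertian and $\ell_{\Phi}$-Besselian, and hence is an $\ell_{\Phi}$-decomposition in the sense of Definition~\ref{ellphi}. This is precisely the assertion already noted just after Definition~\ref{ellphi} that every Schauder-Orlicz decomposition is an $\ell_{\Phi}$-decomposition.

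Since $\Phi$ is assumed non-degenerate, part (i) of Theorem~\ref{bc} applies verbatim, and we conclude that $\{\mathfrak{M}_n\}_{n=0}^{\infty}$ is boundedly complete. There is no genuine obstacle here: once the identification of Schauder-Orlicz decompositions with $\ell_{\Phi}$-decompositions (with unit constants) is made explicit, the corollary is an immediate specialization of Theorem~\ref{bc}(i). The only point worth spelling out, for the reader's convenience, is the equivalence between~(\ref{Schauder-Orlicz}) and the Luxemburg-norm characterization of $\|\cdot\|_{\ell_{\Phi}}$, which is standard in Orlicz space theory.
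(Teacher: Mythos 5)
Your proposal is correct and follows exactly the route the paper intends: identify the Schauder-Orlicz decomposition as an $\ell_{\Phi}$-decomposition with constants $c_1=c_2=1$ (the identity~(\ref{Schauder-Orlicz}) being the Luxemburg-norm equality) and then invoke Theorem~\ref{bc}(i). Nothing further is needed.
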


By result of Sanders~\cite{Sandersd} we have that  every Schauder-Orlicz decomposition in reflexive space $X$ is shrinking.
The construction of non-shrinking Schauder-Orlicz decomposition of $L_1(0,1)$ is given by Theorem~\ref{SODLp}.

Further we have the following natural generalization of Corollary~15.6 of~\cite{Singer2}.
\begin{thm}\label{seqofspace}
A Banach space $X$ has an $\ell_{\Phi}$-decomposition
  if and only if there exists a sequence of Banach spaces $\{X_n\}_{n=0}^{\infty}$ such that
  $$X\cong \left(X_0,X_1,X_2,\ldots \right)_{\ell_{\Phi}},$$
  where $\left(X_0,X_1,X_2,\ldots \right)_{\ell_{\Phi}}$ stands for a Banach space of all sequences $\mathcal{X}=\{x_n\}_{n=0}^{\infty}$, such that $x_n\in X_n$ and
$$|||\mathcal{X}||| =\inf\left\{\rho >0: \sum\limits_{n=0}^{\infty} \Phi\left(\frac{\|x_n\|}{\rho}\right)\leq 1\right\}<\infty.$$
\end{thm}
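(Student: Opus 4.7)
The plan is to realize the isomorphism explicitly via the associated coordinate projections, so that the $\ell_\Phi$-Hilbertian/Besselian inequality in~\eqref{Hilbert-Bessel} is transparently the same as the defining norm on the $\ell_\Phi$-sum.

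For the forward implication, assume $\{\mathfrak{M}_n\}_{n=0}^\infty$ is an $\ell_\Phi$-decomposition of $X$ with a.s.c.p.\ $\{P_n\}$, and set $X_n:=\mathfrak{M}_n$. I would define
$$T:X\to (X_0,X_1,X_2,\ldots)_{\ell_\Phi},\qquad T(x):=\{P_n x\}_{n=0}^\infty.$$
The $\ell_\Phi$-Besselian half of~\eqref{Hilbert-Bessel} immediately shows that $T(x)$ actually lies in the $\ell_\Phi$-sum, with $|||T(x)|||\leq c_1^{-1}\|x\|$; the $\ell_\Phi$-Hilbertian half shows $T$ is bounded below, hence injective. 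The real content is surjectivity. Given any $\mathcal{X}=\{x_n\}$ with $|||\mathcal{X}|||<\infty$, the Besselian inequality applied to finite sections yields
$$\Bigl\|\sum_{i=0}^N x_i\Bigr\|\leq c_2\bigl\|\{\|x_i\|\}_{i=0}^N\bigr\|_{\ell_\Phi}\leq c_2\,|||\mathcal{X}|||$$
for every $N$, so the partial sums of $\sum x_i$ are norm-bounded in $X$. In the non-degenerate case, Theorem~\ref{bc}(i) says $\{\mathfrak{M}_n\}$ is boundedly complete, so the series converges to some $x\in X$; continuity of $P_n$ gives $P_n x=x_n$, i.e.\ $T(x)=\mathcal{X}$.

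For the reverse implication, let $Y:=(X_0,X_1,X_2,\ldots)_{\ell_\Phi}$ and suppose $S:Y\to X$ is an isomorphism. The canonical block subspaces
$$Y_n:=\{(0,\ldots,0,x_n,0,\ldots):x_n\in X_n\}\subset Y$$
form a Schauder decomposition of $Y$ whose coordinate projections $Q_n$ are the obvious ones, and by construction of the $\ell_\Phi$-sum one has $|||y|||=\|\{\|Q_n y\|\}\|_{\ell_\Phi}$ identically, so $\{Y_n\}$ is an $\ell_\Phi$-decomposition of $Y$ with optimal constants $c_1=c_2=1$. Setting $\mathfrak{M}_n:=S(Y_n)$ and $P_n:=SQ_nS^{-1}$ transports this structure to $X$: the equality $|||y|||=\|\{\|Q_ny\|\}\|_{\ell_\Phi}$ becomes the two-sided estimate
$$\|S^{-1}\|^{-1}\bigl\|\{\|P_nx\|\}\bigr\|_{\ell_\Phi}\leq\|x\|\leq\|S\|\bigl\|\{\|P_nx\|\}\bigr\|_{\ell_\Phi}$$
on $X$, which is exactly the $\ell_\Phi$-Hilbertian/Besselian property, so $\{\mathfrak{M}_n\}$ is an $\ell_\Phi$-decomposition.

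The one genuinely delicate point, which I expect to be the main obstacle, is the degenerate case, where $\ell_\Phi\cong\ell_\infty$ but a Schauder decomposition automatically forces $P_nx\to 0$. Here the argument for surjectivity via bounded completeness is not available, and one must reconcile the definition of $(X_0,X_1,\ldots)_{\ell_\Phi}$ with what the image of $T$ actually picks out; by Theorem~\ref{bc}(ii), $\{\mathfrak{M}_n\}$ is a $c_0$-decomposition, so the image of $T$ lands in the natural $c_0$-subspace of the $\ell_\Phi$-sum, and the theorem must be read modulo this identification (so that the $\ell_\Phi$-sum on the right is interpreted as the corresponding $c_0$-sum when $\Phi$ is degenerate). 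Once this convention is fixed, the surjectivity of $T$ in the degenerate case reduces to the standard fact that every $c_0$-summable sequence $\{x_n\}$ with $x_n\in X_n$ gives a norm-convergent series $\sum x_n$ in $X$, and the rest of the proof goes through verbatim.
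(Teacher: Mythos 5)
Your construction is essentially the one the paper intends: the paper's own proof is only a pointer to Singer's Corollary~15.6 together with the estimate~(\ref{Hilbert-Bessel}), and that argument is exactly your coordinate-projection map $Tx=\{P_nx\}$ in one direction and the transport of the canonical block decomposition of the sum in the other. The forward direction as you write it is fine: boundedness and the lower bound come from~(\ref{Hilbert-Bessel}), and surjectivity from bounded partial sums plus Theorem~\ref{bc}(i). (Two small remarks: the inequality you apply to the partial sums $\sum_{i=0}^N x_i$ is the right-hand, ``Hilbertian'' half of~(\ref{Hilbert-Bessel}), not the Besselian one you name; and surjectivity can be had even more directly from the convergence form of the $\ell_\Phi$-Hilbertian property -- $\{\|x_n\|\}\in\ell_\Phi$ already forces $\sum_n x_n$ to converge -- without invoking bounded completeness at all.)

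The genuine weak point is in the reverse direction, and your localization of the difficulty to degenerate $\Phi$ is too narrow. You assert that the blocks $Y_n$ form a Schauder decomposition of $Y=(X_0,X_1,\ldots)_{\ell_\Phi}$ ``by construction''. That requires the tails $\|\{\|x_i\|\}_{i>N}\|_{\ell_\Phi}\to 0$ for every element of the sum, which is automatic only when $\Phi$ satisfies the $\Delta_2$-condition at zero. If $\Phi$ is non-degenerate but fails $\Delta_2$, the Luxemburg-norm sum contains elements whose coordinate series does not converge (already with one-dimensional $X_n$: the unit vectors are a basis of $h_\Phi$, not of the non-separable space $\ell_\Phi$), so $\{Y_n\}$ is not a Schauder decomposition and neither is $\{SY_n\}$; the exact obstruction you flagged for degenerate $\Phi$ thus occurs for every $\Phi$ without $\Delta_2$, and the corrected statement should either assume $\Delta_2$ at zero or replace the sum by its subspace of elements with vanishing tails, exactly as you proposed in the degenerate case. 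Be aware that your forward direction quietly inherits the same subtlety through Theorem~\ref{bc}(i), whose proof in the paper also rests on tail norms in $\ell_\Phi$ tending to zero; phrasing surjectivity via the Hilbertian (convergence) property of the decomposition avoids that dependence altogether.
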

\begin{proof}
  To prove this result one can use, e.g., arguments from the proof of Corollary~15.6 of~\cite{Singer2} together with two-sided estimate~(\ref{Hilbert-Bessel}).
\end{proof}
Note that for the case of $\ell_{1}$-decompositions, i.e. when $\Phi(t)=t$, Theorem~\ref{seqofspace} was first proved by Chadwick~\cite{ChadwickA}.

It turns out that $\ell_{\Phi}$-decomposition is invariant under isomorphism and Schauder-Orlicz decomposition is invariant under isometry.
\begin{thm}\label{isometric}
Let $\{\mathfrak{M}_n\}_{n=0}^{\infty}$ be a Schauder decomposition in a Banach space $X$.
\begin{enumerate}
  \item Suppose that $\{\mathfrak{M}_n\}_{n=0}^{\infty}$ is a Schauder-Orlicz decomposition with an Orlicz function $\Phi$ and the a.s.c.p. $\{P_n\}_{n=0}^{\infty}$ in $X$
  and $X$ is isometric to a Banach space $Y$ with the operator of isometry $T:X\mapsto Y$.
  Then $\{T \mathfrak{M}_n\}_{n=0}^{\infty}$ is also a Schauder-Orlicz decomposition with an Orlicz function $\Phi$ in $Y$.
  \item Suppose that $\{\mathfrak{M}_n\}_{n=0}^{\infty}$ is an $\ell_{\Phi}$-decomposition with an Orlicz function $\Phi$ and the a.s.c.p. $\{P_n\}_{n=0}^{\infty}$ in $X$
  and $X$ is isomorphic to a Banach space $Y$ with the isomorphism $S:X\mapsto Y$.
  Then $\{S \mathfrak{M}_n\}_{n=0}^{\infty}$ is also an $\ell_{\Phi}$-decomposition with an Orlicz function $\Phi$ in $Y$.
\end{enumerate}
\end{thm}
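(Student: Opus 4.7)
The plan is to show, in both parts, that $\{T\mathfrak{M}_n\}$ (respectively $\{S\mathfrak{M}_n\}$) is a Schauder decomposition of $Y$ whose a.s.c.p.\ is the conjugated family $Q_n = TP_nT^{-1}$ (respectively $Q_n = SP_nS^{-1}$), and then to push the defining analytic condition through the bijection. The first step is common to both: since $T$ and $S$ are linear bijections that are bicontinuous, applying them term by term to the unique expansion $x=\sum_{n} P_n x$ produces a norm convergent expansion $y=\sum_{n}Q_n y$ with summands in $T\mathfrak{M}_n$ or $S\mathfrak{M}_n$, and uniqueness of the expansion in $Y$ follows from uniqueness in $X$ via the inverse map. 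Hence $\{Q_n\}$ is the a.s.c.p.\ of the image decomposition in each case, and $Q_i Q_j = \delta_{ij} Q_i$ is inherited from the analogous relation for $\{P_n\}$.

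For part 1, let $y\in Y$ and set $x=T^{-1}y$. Since $T$ is an isometry, $\|Q_n y\|=\|TP_nT^{-1}y\|=\|P_nx\|$ for every $n$, and $\|y\|=\|x\|$. Substituting these equalities into the Luxemburg infimum defining~(\ref{Schauder-Orlicz}) for $\{\mathfrak{M}_n\}$ in $X$ immediately yields the same identity for $\{T\mathfrak{M}_n\}$ in $Y$ with the same Orlicz function $\Phi$.

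For part 2, the situation is slightly more delicate because $S$ only satisfies the two-sided bound $\|S^{-1}\|^{-1}\|x\|\leq \|Sx\|\leq \|S\|\|x\|$. Applying this both globally to $x=S^{-1}y$ and componentwise to $P_nx=S^{-1}Q_n y$ gives
\[
\|S^{-1}\|^{-1}\|P_nx\|\leq \|Q_n y\|\leq \|S\|\|P_n x\|\qquad(n\geq 0).
\]
Because $\Phi$ is non-decreasing, the Luxemburg norm $\|\cdot\|_{\ell_\Phi}$ is monotone on nonnegative sequences with respect to the componentwise order and is positively homogeneous, so the preceding pointwise bounds upgrade to
\[
\|S^{-1}\|^{-1}\bigl\|\{\|P_n x\|\}\bigr\|_{\ell_\Phi}\leq \bigl\|\{\|Q_n y\|\}\bigr\|_{\ell_\Phi}\leq \|S\|\bigl\|\{\|P_n x\|\}\bigr\|_{\ell_\Phi}.
\]
Combining these with the $\ell_\Phi$-Hilbertian/Besselian estimate~(\ref{Hilbert-Bessel}) for $\{P_n\}$ in $X$ and with the isomorphism bounds for $\|y\|$ in terms of $\|x\|$ yields the analogous two-sided estimate for $\{Q_n y\}$ in $Y$ with constants $\|S\|^{-1}\|S^{-1}\|^{-1}c_1$ and $\|S\|\|S^{-1}\|c_2$, so $\{S\mathfrak{M}_n\}$ is an $\ell_\Phi$-decomposition with the same Orlicz function $\Phi$.

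There is no substantive obstacle: the only point requiring care is the monotonicity and positive homogeneity of the Luxemburg norm on nonnegative sequences, both of which are immediate from the monotonicity of $\Phi$ and the scaling in the defining infimum. Conceptually the argument simply reflects that~(\ref{Schauder-Orlicz}) is an exact equality which demands an isometry to transfer, whereas the two-sided bound~(\ref{Hilbert-Bessel}) can absorb the multiplicative distortion of an isomorphism.
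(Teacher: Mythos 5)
Your argument is correct and follows essentially the same route as the paper's proof: you pass to the conjugated coordinate projections $TP_nT^{-1}$ (resp.\ $SP_nS^{-1}$), verify that the image family is a Schauder decomposition of $Y$, and then transfer the Luxemburg identity~(\ref{Schauder-Orlicz}) through the isometry for part 1, and the two-sided estimate~(\ref{Hilbert-Bessel}) through the isomorphism for part 2. The only difference is that the paper dismisses part 2 with ``the proof goes similar,'' whereas you carry it out explicitly, correctly using monotonicity and positive homogeneity of the Luxemburg norm to obtain the constants $\|S\|^{-1}\|S^{-1}\|^{-1}c_1$ and $\|S\|\|S^{-1}\|c_2$.
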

\begin{proof}
1. For all $x\in X$ we have that $Tx=y\in Y$, $x=T^{-1}y$ and, moreover,
  $$\|Tx\|_{Y}=\|x\|_{X},\qquad \|P_n x\|_{X} = \|T P_n x\|_{Y}, \: n\geq 0.$$
  Since $\{\mathfrak{M}_n\}_{n=0}^{\infty}$ is a Schauder decomposition of $X$ with the a.s.c.p. $\{P_n\}_{n=0}^{\infty}$ we have that every $x\in X$ has a unique expansion
  $$\sum\limits_{n=0}^{\infty} P_n x =x.$$
  Hence
  $$\sum\limits_{n=0}^{\infty} T P_n x =Tx.$$
  Thus every $y=Tx\in Y$ has a unique expansion
  $$\sum\limits_{n=0}^{\infty} T P_n T^{-1} y =TT^{-1} y = y,$$
  which means that $\{T P_nT^{-1} Y\}_{n=0}^{\infty}$  is a Schauder decomposition of $Y$.

  To show that $\{T P_nT^{-1} Y\}_{n=0}^{\infty}$ is a Schauder-Orlicz decomposition with an Orlicz function $\Phi$ in a Banach space $Y$ we note that
  for any $y\in Y$ there exists $x=T^{-1}y\in X$ such that
  \begin{align*}
    \|y\|_{Y}=\|Tx\|_{Y}=  \|x\|_{X}  =  &\inf\left\{\rho >0: \sum\limits_{n=0}^{\infty} \Phi\left(\frac{\|P_n x\|_{X}}{\rho}\right)\leq 1\right\}\\
    =  &\inf\left\{\rho >0: \sum\limits_{n=0}^{\infty} \Phi\left(\frac{\|T P_n x\|_{Y}}{\rho}\right)\leq 1\right\}\\
    = &\inf\left\{\rho >0: \sum\limits_{n=0}^{\infty} \Phi\left(\frac{\|T P_n T^{-1}y\|_{Y}}{\rho}\right)\leq 1\right\}.
  \end{align*}
  Hence~\cite{Marchenko1} $\{T \mathfrak{M}_n\}_{n=0}^{\infty}$ is also a Schauder-Orlicz decomposition with an Orlicz function $\Phi$ and the a.s.c.p. $\{T P_nT^{-1}\}_{n=0}^{\infty}$  in $Y$.

  The proof of the second statement goes similar to the proof of the first.
\end{proof}

\begin{corollary}\label{rel}
  Let $\{\mathfrak{M}_n\}_{n=0}^{\infty}$ be a Schauder-Orlicz decomposition with an Orlicz function $\Phi$
  in a Banach space $X$ and $X$ is isomorphic to a Banach space $Y$ with the isomorphism $S:X\mapsto Y$.
  Then $\{S \mathfrak{M}_n\}_{n=0}^{\infty}$ is an $\ell_{\Phi}$-decomposition in $Y$.
\end{corollary}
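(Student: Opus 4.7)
The plan is to realize Corollary~\ref{rel} as a one-step combination of two facts already established in the excerpt: first, that every Schauder-Orlicz decomposition with Orlicz function $\Phi$ is automatically an $\ell_{\Phi}$-decomposition (the remark placed just below Definition~\ref{ellphi}); and second, that the property of being an $\ell_{\Phi}$-decomposition is preserved under isomorphism, which is precisely the content of part~2 of Theorem~\ref{isometric}.

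Concretely, I would proceed as follows. First, starting from the hypothesis that $\{\mathfrak{M}_n\}_{n=0}^{\infty}$ is a Schauder-Orlicz decomposition with Orlicz function $\Phi$ in $X$, invoke the observation immediately after Definition~\ref{ellphi} to conclude that $\{\mathfrak{M}_n\}_{n=0}^{\infty}$ is in particular an $\ell_{\Phi}$-decomposition in $X$; in other words, the modular characterization of the norm in~(\ref{Schauder-Orlicz}) automatically yields the two-sided bound~(\ref{Hilbert-Bessel}) with constants $c_1=c_2=1$ and with respect to the a.s.c.p.\ of $\{\mathfrak{M}_n\}_{n=0}^{\infty}$. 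Second, apply part~2 of Theorem~\ref{isometric} to the isomorphism $S:X\mapsto Y$: since $\{\mathfrak{M}_n\}_{n=0}^{\infty}$ is an $\ell_{\Phi}$-decomposition of $X$, the image sequence $\{S\mathfrak{M}_n\}_{n=0}^{\infty}$ is an $\ell_{\Phi}$-decomposition of $Y$, and the a.s.c.p.\ is transferred as $\{SP_nS^{-1}\}_{n=0}^{\infty}$.

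No genuine obstacle is expected here, since both ingredients are already at our disposal; the only point worth flagging is the asymmetry between parts~1 and~2 of Theorem~\ref{isometric}, which is the reason why one cannot in general strengthen the conclusion of Corollary~\ref{rel} to say that $\{S\mathfrak{M}_n\}_{n=0}^{\infty}$ is itself a Schauder-Orlicz decomposition in $Y$: the modular identity~(\ref{Schauder-Orlicz}) is preserved only up to the distortion introduced by $\|S\|$ and $\|S^{-1}\|$, which collapses the equality to the two-sided inequality characteristic of $\ell_{\Phi}$-decompositions. This is exactly what accounts for the loss from ``Schauder-Orlicz'' to ``$\ell_{\Phi}$-decomposition'' in the conclusion, and it is consistent with the general fact, noted just after Definition~\ref{ellphi}, that the converse inclusion fails.
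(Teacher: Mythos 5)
Your proposal is correct and is essentially the paper's own route: the paper proves the corollary by noting that "the arguments are the same as in the proof of Theorem~\ref{isometric}", which amounts to exactly your composition of the observation after Definition~\ref{ellphi} (a Schauder-Orlicz decomposition is an $\ell_{\Phi}$-decomposition, indeed with constants $1,1$) with part~2 of Theorem~\ref{isometric} applied to the isomorphism $S$. Your remark about why the conclusion weakens from Schauder-Orlicz to $\ell_{\Phi}$-decomposition under a mere isomorphism is also consistent with the paper.
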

\begin{proof}
  The arguments of the proof are the same as in the proof of Theorem~\ref{isometric}.
\end{proof}

One can prove a partial case of Theorem~\ref{seqofspace}.
\begin{thm}\label{seqofspaceSOD}
A Banach space $X$ has a Schauder-Orlicz decomposition with an Orlicz function $\Phi$
  if and only if there exists a sequence of Banach spaces $\{X_n\}_{n=0}^{\infty}$ such that space $X$ is isometric to
  $$\left(X_0,X_1,X_2,\ldots \right)_{\ell_{\Phi}}.$$
\end{thm}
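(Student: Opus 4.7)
The plan is to run exactly the argument of Theorem~\ref{seqofspace}, but to replace the two-sided estimate~(\ref{Hilbert-Bessel}) by the sharp equality~(\ref{Schauder-Orlicz}) of Definition~\ref{SOD}, so that the isomorphism produced by that proof collapses to an isometry. One direction uses Corollary~\ref{bcsod} (bounded completeness of Schauder-Orlicz decompositions), while the other uses Theorem~\ref{isometric}(1) to transport a canonical Schauder-Orlicz decomposition through the isometry.

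For the necessity direction, assume $\{\mathfrak{M}_n\}_{n=0}^{\infty}$ is a Schauder-Orlicz decomposition of $X$ with Orlicz function $\Phi$ and a.s.c.p.\ $\{P_n\}_{n=0}^{\infty}$. I would set $X_n=\mathfrak{M}_n$ with the inherited norm and define a linear map
$$T:X\to (X_0,X_1,X_2,\ldots)_{\ell_{\Phi}},\qquad Tx=\{P_n x\}_{n=0}^{\infty}.$$
Identity~(\ref{Schauder-Orlicz}) instantly yields $|||Tx|||=\|x\|$, so $T$ is a well-defined linear isometry, and the only real work is surjectivity. Given $\mathcal{Y}=\{y_n\}$ in the target with $y_n\in\mathfrak{M}_n$, I would consider the partial sums $s_N=\sum_{n=0}^N y_n\in X$. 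Since $P_n s_N=y_n$ for $n\le N$, applying~(\ref{Schauder-Orlicz}) to $s_N$ itself gives $\|s_N\|\le|||\mathcal{Y}|||$, so the sequence is bounded; Corollary~\ref{bcsod} then forces $s_N$ to converge in norm to some $x\in X$, and continuity of $P_n$ yields $P_n x=y_n$, hence $Tx=\mathcal{Y}$.

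For the sufficiency direction, given an isometry of $X$ onto $Y=(X_0,X_1,\ldots)_{\ell_{\Phi}}$, by Theorem~\ref{isometric}(1) it suffices to produce a Schauder-Orlicz decomposition of $Y$ itself. The natural candidate consists of the coordinate subspaces $\mathcal{N}_n=\{\{\delta_{nk}x_n\}_{k}:x_n\in X_n\}$ with the coordinate projections $\mathcal{P}_n$. Then $\|\mathcal{P}_n \mathcal{X}\|_Y=\|x_n\|_{X_n}$, and the definition of $|||\cdot|||$ directly delivers~(\ref{Schauder-Orlicz}) provided $\{\mathcal{N}_n\}$ is genuinely a Schauder decomposition of $Y$, i.e.\ the coordinate expansions converge in $Y$.

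The main obstacle is precisely the latter convergence. Checking $|||\mathcal{X}-\sum_{n\le N}\mathcal{P}_n\mathcal{X}|||\to 0$ for every $\mathcal{X}\in Y$ amounts to a tail-estimate in the Orlicz-type sum, and without a $\Delta_2$-type growth control on $\Phi$ at zero the rescaling $\rho_0\mapsto\varepsilon$ in the Luxemburg-style expression $\inf\{\rho:\sum_{n>N}\Phi(\|y_n\|/\rho)\le 1\}$ need not be harmless. In the setting consistent with the rest of the paper (where $\ell_\Phi$ is separable, via $\Delta_2$ at $0$, and $(X_0,X_1,\ldots)_{\ell_\Phi}$ is understood as the closed linear span of its finitely supported elements) this convergence is automatic and the proof closes. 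Outside that setting the statement would have to be read with the same convention on the $\ell_\Phi$-sum, which is the standard device used in the cited Corollary~15.6 of~\cite{Singer2}.
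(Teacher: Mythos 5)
Your proposal is essentially the proof the paper intends: Theorem~\ref{seqofspaceSOD} carries no separate argument beyond the instruction to rerun Theorem~\ref{seqofspace} (i.e.\ the argument of Corollary~15.6 of \cite{Singer2}) with the identity~(\ref{Schauder-Orlicz}) in place of the two-sided estimate~(\ref{Hilbert-Bessel}), and your map $Tx=\{P_nx\}_{n=0}^{\infty}$ together with the coordinate decomposition of the sum transported by Theorem~\ref{isometric}(1) is exactly that, worked out in detail.

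One correction of emphasis: the $\Delta_2$-type caveat you attach only to the sufficiency direction is needed just as much in the necessity direction, so it should not be presented as if Corollary~\ref{bcsod} settles surjectivity unconditionally. If $\Phi$ is non-degenerate but fails $\Delta_2$ at zero (and a fortiori if $\Phi$ is degenerate, where Corollary~\ref{bcsod} does not apply at all), the full sum $\left(X_0,X_1,\ldots\right)_{\ell_{\Phi}}$ of Theorem~\ref{seqofspace} contains, via sequences $\{t_nu_n\}$ with unit vectors $u_n\in X_n$ and $\{t_n\}\in\ell_{\Phi}$, an isometric copy of the nonseparable space $\ell_{\Phi}$; on the other hand $h_{\Phi}$, the closure of the finitely supported sequences (with $\Phi$ normalized so that $\Phi(1)=1$), is separable and carries the canonical coordinate Schauder-Orlicz decomposition, hence is isometric to no such full sum. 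That same canonical decomposition of $h_{\Phi}$ is a Schauder-Orlicz decomposition which is not boundedly complete (truncations of an element of $\ell_{\Phi}\setminus h_{\Phi}$ have bounded partial sums but non-vanishing tails), so leaning on Corollary~\ref{bcsod} as stated does not close the surjectivity step in that regime -- the tail-norm step in the proof of Theorem~\ref{bc}(i) itself uses $\Delta_2$. With the convention you state at the end (either $\Delta_2$ at zero, or the sum read as the closed span of its finitely supported elements) applied to both halves, your argument is correct and coincides with the paper's intended proof.
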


In what follows we are using the following definitions.
\begin{definition}\label{MUSD}\cite{Marchenko1}
  A Schauder decomposition $\{\mathfrak{M}_n\}_{n=0}^{\infty}$ in a Banach space $X$ is called
unconditional with constant $M$ provided that
 $\exists M\geq 1$ such that
$$ \left\|\sum\limits_{i=0}^{n} \delta_i y_i\right\| \leq M \left\|\sum\limits_{i=0}^{n}  y_i\right\|$$
for any $n\geq 0$, $y_i\in \mathfrak{M}_i$ and selectors $\delta_i\in \{0,1\}.$
\end{definition}

\begin{definition}\label{ellphic}
  We will call an $\ell_{\Phi}$-decomposition $\{\mathfrak{M}_n\}_{n=0}^{\infty}$ of a Banach space $X$ with  the a.s.c.p. $\{P_n\}_{n=0}^{\infty}$
  by $\ell_{\Phi}$-decomposition with constants $c_1,$ $c_2$ provided there exist constants $c_2\geq c_1>0$ such that for each $x=\sum\limits_{n=0}^{\infty} P_n x\in X$ we have~(\ref{Hilbert-Bessel}), i.e.
$$c_1 \|\{\|P_n x\|\}_{n=0}^{\infty}\|_{\ell_{\Phi}} \leq \|x\|\leq c_2  \|\{\|P_n x\|\}_{n=0}^{\infty}\|_{\ell_{\Phi}}.$$
\end{definition}
Clearly any Schauder-Orlicz decomposition with an Orlicz function $\Phi$ is an $\ell_{\Phi}$-decomposition with constants $1$, $1$.

\begin{thm}\label{uncond}
  Every $\ell_{\Phi}$-decomposition $\{\mathfrak{M}_n\}_{n=0}^{\infty}$ with constants $c_1,$ $c_2$ in $X$ is unconditional with constant  $\frac{c_2}{c_1}$.
  Every Schauder-Orlicz decomposition is unconditional with constant $1.$
\end{thm}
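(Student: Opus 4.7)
The plan is to reduce the unconditionality inequality to a monotonicity property of the Luxemburg norm by invoking the two-sided estimate~(\ref{Hilbert-Bessel}) from Definition~\ref{ellphic}. Fix $n\geq 0$, vectors $y_i\in\mathfrak{M}_i$ and selectors $\delta_i\in\{0,1\}$, and set
\[
y=\sum_{i=0}^{n} y_i,\qquad z=\sum_{i=0}^{n}\delta_i y_i.
\]
Since $\{\mathfrak{M}_n\}_{n=0}^{\infty}$ is a Schauder decomposition with a.s.c.p. $\{P_n\}$, uniqueness of the expansion forces $P_i y=y_i$ and $P_i z=\delta_i y_i$ for $i\leq n$, and $P_i y=P_i z=0$ for $i>n$. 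Thus the coordinate-norm sequences of $y$ and $z$ are $\{\|y_i\|\}$ and $\{\delta_i\|y_i\|\}$ respectively, both of finite support.

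Next I would invoke the monotonicity of the Luxemburg norm: if $0\leq a_i\leq b_i$ for all $i$, then $\Phi(a_i/\rho)\leq \Phi(b_i/\rho)$ for every $\rho>0$ (since $\Phi$ is non-decreasing on $[0,\infty)$), and consequently the set $\{\rho>0:\sum \Phi(b_i/\rho)\leq 1\}$ is contained in $\{\rho>0:\sum \Phi(a_i/\rho)\leq 1\}$, so that $\|\{a_i\}\|_{\ell_{\Phi}}\leq\|\{b_i\}\|_{\ell_{\Phi}}$. Applied with $a_i=\delta_i\|y_i\|$ and $b_i=\|y_i\|$ this gives $\|\{\delta_i\|y_i\|\}\|_{\ell_{\Phi}}\leq\|\{\|y_i\|\}\|_{\ell_{\Phi}}$.

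Now apply the right-hand side of~(\ref{Hilbert-Bessel}) to $z$ and the left-hand side to $y$:
\[
\|z\|\leq c_2\,\|\{\delta_i\|y_i\|\}\|_{\ell_{\Phi}}\leq c_2\,\|\{\|y_i\|\}\|_{\ell_{\Phi}}\leq \frac{c_2}{c_1}\,\|y\|,
\]
which is precisely the unconditionality estimate from Definition~\ref{MUSD} with constant $M=c_2/c_1$. Since every Schauder-Orlicz decomposition is an $\ell_{\Phi}$-decomposition with constants $c_1=c_2=1$ (as noted after Definition~\ref{ellphic}), applying the first part immediately yields unconditionality with constant $1$, proving the second assertion.

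I do not foresee a genuine obstacle: the argument is essentially bookkeeping once the monotonicity of $\|\cdot\|_{\ell_{\Phi}}$ on non-negative sequences is made explicit. The only point deserving care is the verification of this monotonicity from the definition of the Orlicz function, and the correct reading of $P_i$ applied to finite partial sums, which is handled by the uniqueness part of the definition of a Schauder decomposition.
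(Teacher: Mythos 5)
Your proof is correct and follows essentially the same route as the paper's: apply the right-hand side of~(\ref{Hilbert-Bessel}) to the selected sum, use monotonicity of the Luxemburg norm (which the paper uses implicitly, you make explicit), then the left-hand side to recover $\|y\|$, and obtain the second assertion from $c_1=c_2=1$. The only cosmetic difference is that you work with finite sums as in Definition~\ref{MUSD} while the paper phrases the same computation for a general $x=\sum_n P_n x$; the argument is the same.
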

\begin{proof}
Fix any set of selectors $\delta_i\in \{0,1\}$.
  Since $\{\mathfrak{M}_n\}_{n=0}^{\infty}$ with the a.s.c.p. $\{P_n\}_{n=0}^{\infty}$ is an $\ell_{\Phi}$-decomposition in $X$,  for each $x=\sum\limits_{n=0}^{\infty} x_n=\sum\limits_{n=0}^{\infty} P_n x\in X$ we have
    applying~(\ref{Hilbert-Bessel}) that
    \begin{align*}
       \left\|\sum\limits_{i=0}^{\infty} \delta_i x_i\right\| & = \left\|\sum\limits_{i=0}^{\infty} \delta_i P_i x\right\|\leq c_2  \left\|\left\{\left\|P_j \sum\limits_{i=0}^{\infty} \delta_i P_i x\right\|\right\}_{j=0}^{\infty}\right\|_{\ell_{\Phi}}=c_2  \left\|\left\{\left\|\delta_j P_j  x\right\|\right\}_{j=0}^{\infty}\right\|_{\ell_{\Phi}}\\
     &= c_2  \left\|\left\{\delta_j\left\| P_j  x\right\|\right\}_{j=0}^{\infty}\right\|_{\ell_{\Phi}}\leq  c_2  \left\|\left\{\left\| P_j  x\right\|\right\}_{j=0}^{\infty}\right\|_{\ell_{\Phi}}
     \leq \frac{c_2}{c_1} \|x\|.
    \end{align*}
    It follows that $\{\mathfrak{M}_n\}_{n=0}^{\infty}$ is unconditional with constant $\frac{c_2}{c_1}$.

    To prove that every Schauder-Orlicz decomposition in a Banach space is unconditional with constant $1$ it is enough to note that Schauder-Orlicz decomposition is an $\ell_{\Phi}$-decomposition such that $c_1=c_2=1$ in~(\ref{Hilbert-Bessel}).
\end{proof}

Note that spaces $C[0,1]$ and $L_1(0,1)$ have no unconditional basis, but they still have unconditional Schauder decompositions~\cite{Singer2}.
An example of unconditional Schauder decomposition (even Schauder-Orlicz decomposition) in space $L_1(0,1)$ may be found in Theorem~\ref{SODLp}.
Moreover, $L_1(0,1)$ has no absolutely convergent basis~\cite{Singer1} although
it has an absolute Schauder decomposition (coincides with notion of $\ell_1$-decomposition), see Theorem~\ref{SODLp}.
The reader can find more about
various  properties of unconditional Schauder decompositions in Banach spaces in, e.g.,~\cite{Singer2},~\cite{L-T},~\cite{Marchenko1},~\cite{Marchenko2},~\cite{Marchenko3}.

\subsection{Inclinations of subspaces, Grinblyum-James orthogonality and Schauder-Orlicz decompositions}
Let $\mathfrak{M}$ and $\mathfrak{N}$ be two subspaces of a Banach space $X$.
Recall that an inclination of $\mathfrak{M}$ to $\mathfrak{N}$ is the number~\cite{Gurarii}
$$\left(\widehat{\mathfrak{M},\mathfrak{N}}\right)=\inf\limits_{x\in \mathfrak{M}, y\in \mathfrak{N}, \|x\|=1} \|x-y\|.$$

We mention here some facts about inclinations of subspaces and their importance in Schauder decompositions theory.
It is easy to see that
$$\left(\widehat{\mathfrak{M},\mathfrak{N}}\right)=\frac{1}{\|P\|},$$
where $P$ is a projection from $\mathfrak{M} \oplus \mathfrak{N}$ onto $\mathfrak{M}$, parallel to $\mathfrak{N}$~\cite{GurariiO}.
Since $\|P\|\geq 1$
it follows that for any $\mathfrak{M}$, $\mathfrak{N}$ we have
$$0\leq \left(\widehat{\mathfrak{M},\mathfrak{N}}\right)\leq 1.$$
Note that the notion of inclination is not symmetric in general~\cite{GurariiOR}: there exists $\mathfrak{M}$, $\mathfrak{N}$ such that
$\left(\widehat{\mathfrak{M},\mathfrak{N}}\right)\neq \left(\widehat{\mathfrak{N},\mathfrak{M}}\right)$. Moreover, the following is true~\cite{GurariiOR}:
\begin{enumerate}
  \item If $\left(\widehat{\mathfrak{M},\mathfrak{N}}\right)=\delta$, then
  $\left(\widehat{\mathfrak{N},\mathfrak{M}}\right)\geq \frac{\delta}{1+\delta}$. The latter inequality is exact for any $\delta\in[0,1].$
  \item $\left(\widehat{\mathfrak{M},\mathfrak{N}}\right)=\left(\widehat{\mathfrak{N},\mathfrak{M}}\right)$ for any two subspaces $\mathfrak{M}$, $\mathfrak{N}$
  in a Banach space $X$ with $\dim X>2$ if and only if $X$ is a space with a scalar product.
\end{enumerate}

The direct sum  $\mathfrak{M} \oplus \mathfrak{N}$ of two subspaces in a Banach space is closed if and only if~\cite{GrinblyumG},~\cite{GurariiOR}
$$\left(\widehat{\mathfrak{M},\mathfrak{N}}\right)>0.$$

The following geometric criterion of Schauder decomposition was proved by Grinblyum in~\cite{Grinblyum},~\cite{GrinblyumG}.
\begin{thm}\cite{Grinblyum,GrinblyumG,Gurarii}\label{gg}
Let $\{\mathfrak{M}_n\}_{n=0}^{\infty}$ be
 a complete sequence of subspaces in a Banach space $X$.
 Then $\{\mathfrak{M}_n\}_{n=0}^{\infty}$ is a Schauder decomposition of $X$ if and only if
 there exists a constant $\delta>0$ such that for all $n\geq 0$ and $m\geq 1$ we have
 \begin{equation}\label{infd}
   \left(\widehat{\mathfrak{G}_n,\mathfrak{L}_{n,m}}\right)\geq \delta.
 \end{equation}
 Here
 $$\mathfrak{G}_n=\overline{Lin}\left\{\mathfrak{M}_0,\mathfrak{M}_1,\ldots,\mathfrak{M}_n\right\},$$
 $$\mathfrak{L}_{n,m}=\overline{Lin}\left\{\mathfrak{M}_{n+1},\mathfrak{M}_{n+2},\ldots,\mathfrak{M}_{n+m}\right\}.$$
\end{thm}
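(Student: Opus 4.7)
The plan is to establish both implications using the identity $(\widehat{\mathfrak{M},\mathfrak{N}}) = 1/\|P\|$ recalled above and the uniform boundedness of the partial-sum projections associated with a Schauder decomposition.

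For the direct implication, suppose $\{\mathfrak{M}_n\}_{n=0}^{\infty}$ is a Schauder decomposition with a.s.c.p. $\{P_n\}_{n=0}^{\infty}$, and form the partial sums $S_n = \sum_{k=0}^{n} P_k$. By the standard basis-constant argument (a consequence of the Banach--Steinhaus theorem applied to the pointwise-convergent sequence $S_n x \to x$), $K := \sup_n \|S_n\| < \infty$. For fixed $n \geq 0$ and $m \geq 1$, $S_n$ is the identity on $\mathfrak{G}_n$ and vanishes on $\mathfrak{L}_{n,m}$, hence its restriction to the algebraic sum $\mathfrak{G}_n \oplus \mathfrak{L}_{n,m}$ is precisely the projection onto $\mathfrak{G}_n$ parallel to $\mathfrak{L}_{n,m}$, of norm at most $K$. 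The identity $(\widehat{\mathfrak{M},\mathfrak{N}}) = 1/\|P\|$ then gives $(\widehat{\mathfrak{G}_n,\mathfrak{L}_{n,m}}) \geq 1/K$, so that $\delta := 1/K$ is the required constant.

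For the converse, assume completeness of $\{\mathfrak{M}_n\}_{n=0}^{\infty}$ and the uniform lower bound $\delta > 0$ on the inclinations. First, $(\widehat{\mathfrak{G}_n,\mathfrak{L}_{n,m}}) > 0$ forces every sum $\mathfrak{G}_n \oplus \mathfrak{L}_{n,m}$ to be a closed algebraic direct sum and, in particular, $\mathfrak{G}_{n-1} \cap \mathfrak{M}_n = \{0\}$ for every $n \geq 1$. An induction on the largest index where two decompositions disagree then shows that each element $x$ of the dense linear span $Y := Lin\{\mathfrak{M}_n : n \geq 0\}$ has a \emph{unique} finite expansion $x = y_0 + y_1 + \cdots + y_N$ with $y_k \in \mathfrak{M}_k$. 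Define $S_n : Y \to \mathfrak{G}_n$ by truncation, $S_n x = y_0 + \cdots + y_n$ for $n < N$ and $S_n x = x$ for $n \geq N$. For $n < N$ this $S_n x$ coincides with the value at $x$ of the algebraic projection of $\mathfrak{G}_n \oplus \mathfrak{L}_{n,N-n}$ onto $\mathfrak{G}_n$ parallel to $\mathfrak{L}_{n,N-n}$, whose norm equals $1/(\widehat{\mathfrak{G}_n,\mathfrak{L}_{n,N-n}}) \leq 1/\delta$. Hence $\|S_n x\| \leq (1/\delta)\|x\|$ on $Y$, independently of $N$.

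By density of $Y$, each $S_n$ extends to a bounded linear operator on $X$ of norm at most $1/\delta$; since $S_n x = x$ for all $x \in Y$ and all sufficiently large $n$, the uniform bound forces $S_n x \to x$ for every $x \in X$. Setting $P_n := S_n - S_{n-1}$ (with $S_{-1} := 0$) then yields bounded projections with ranges contained in $\mathfrak{M}_n$ and satisfying $\sum_{n=0}^{\infty} P_n x = x$; uniqueness of the expansion is inherited from the trivial intersections $\mathfrak{G}_{n-1} \cap \mathfrak{M}_n = \{0\}$. The main technical obstacle will lie in the converse, specifically in verifying that the truncation $S_n$, though initially defined via an $N$-dependent representation, actually agrees with the inclination projection onto $\mathfrak{G}_n$ parallel to $\mathfrak{L}_{n,N-n}$ and therefore inherits a \emph{single} uniform bound $\|S_n\| \leq 1/\delta$ that is independent of the chosen representative and of $m$.
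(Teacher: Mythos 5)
The paper itself gives no proof of this theorem --- it is quoted from Grinblyum and Gurarii --- so your argument stands on its own; it is in fact the standard proof of this criterion, and both implications are essentially sound. The direct implication (uniform boundedness of the partial-sum operators $S_n$, which act on $\mathfrak{G}_n\oplus\mathfrak{L}_{n,m}$ as the projection onto $\mathfrak{G}_n$ parallel to $\mathfrak{L}_{n,m}$, combined with the identity $(\widehat{\mathfrak{M},\mathfrak{N}})=1/\|P\|$) is complete. In the converse, the trivial intersections, the uniqueness of \emph{finite} expansions on $Y=Lin\{\mathfrak{M}_n\}$, the bound $\|S_n x\|\leq \|x\|/\delta$ (which is automatically independent of the representative, since the finite expansion is unique, so the ``obstacle'' you flag at the end is not actually an issue), the extension by density, and the convergence $S_n x\to x$ are all correct.

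The one step that does not hold as written is your justification of uniqueness of the \emph{infinite} expansion: you say it is ``inherited from the trivial intersections $\mathfrak{G}_{n-1}\cap\mathfrak{M}_n=\{0\}$.'' Those intersections only exclude two distinct finite representations; for a convergent series $\sum_{k}x_k=0$ with $x_k\in\mathfrak{M}_k$ they give nothing, because the relevant obstruction involves the closed span of infinitely many later subspaces, and in general a complete sequence with such trivial finite intersections can admit nontrivial expansions of zero. The fix is immediate from what you have already constructed: each extended operator $S_n$ is the identity on $\mathfrak{G}_n$ and vanishes on $Lin\{\mathfrak{M}_k:k>n\}$, hence by continuity on $\overline{Lin}\{\mathfrak{M}_k:k>n\}$; therefore for \emph{any} convergent representation $x=\sum_k x_k$ with $x_k\in\mathfrak{M}_k$ one gets $S_n x=\sum_{k\leq n}x_k$, so $x_n=(S_n-S_{n-1})x=P_n x$ is determined by $x$, which is exactly the required uniqueness. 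With that correction (and the remark that $P_nX\subseteq\mathfrak{M}_n$ uses closedness of the $\mathfrak{M}_n$, per the paper's standing convention), your proof is complete.
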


The supremum of all $\delta$, satisfying~(\ref{infd}), was called in~\cite{Grinblyum} the index of the sequence $\{\mathfrak{M}_n\}_{n=0}^{\infty}$.
The index $\gamma\left(\{\mathfrak{M}_n\}_{n=0}^{\infty}\right)$ is uniquely defined the sequence $\{\mathfrak{M}_n\}_{n=0}^{\infty}$ and satisfy~\cite{Grinblyum}
$$0\leq \gamma\left(\{\mathfrak{M}_n\}_{n=0}^{\infty}\right)\leq1.$$
Moreover, if
\begin{equation}\label{orthogonalg}
  \gamma\left(\{\mathfrak{M}_n\}_{n=0}^{\infty}\right)=1,
\end{equation}
then the sequence $\{\mathfrak{M}_n\}_{n=0}^{\infty}$ is called orthogonal~\cite{Gurarii}.

If $\left(\widehat{\mathfrak{M},\mathfrak{N}}\right)=1$ then it is said that $\mathfrak{M}$ is orthogonal to $\mathfrak{N}$~\cite{Gurarii}.
In~\cite{Gurarii} Gurarii proved the following interesting geometric result: There exists Banach spaces of arbitrary finite dimension $n>2$, for each two subspaces $\mathfrak{M}$, $\mathfrak{N}$  of which, where
$\dim \mathfrak{M} >1$, we have
$$\left(\widehat{\mathfrak{M},\mathfrak{N}}\right)<1.$$
It follows that there exists Banach spaces of arbitrary finite dimension $n>2$ without an orthogonal Schauder basis~\cite{Gurarii}, i.e. a basis, linear spans of elements of which satisfy~(\ref{orthogonalg}), see also~\cite{Singer1}, p.~233. This notion of orthogonality of bases in Banach spaces is due to Grinblyum~\cite{Grinblyum} and James~\cite{James}.
\begin{rmk}
Note that for any Schauder basis in Banach space $X$ one can define a new norm, which depends on this basis and equivalent to the original norm of $X$,
with respect to which this basis become orthogonal~\cite{Banach}.
\end{rmk}

\begin{prop}\label{schauder basis}
  There exists an orthogonal Schauder decomposition in $C[a,b]$ which is not Schauder-Orlicz decomposition.
\end{prop}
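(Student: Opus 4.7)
The plan is to realize the desired example via the classical Faber-Schauder basis $\{e_n\}_{n=0}^{\infty}$ of $C[a,b]$ (obtained from its standard realization on $C[0,1]$ by an isometric affine change of variable) and take the associated one-dimensional Schauder decomposition $\mathfrak{M}_n=\mathrm{Lin}\{e_n\}$. The crucial classical property I would invoke is that this basis is monotone with basis constant equal to $1$, i.e., every partial-sum projection $S_n=P_0+\cdots+P_n$ satisfies $\|S_n\|=1$ on $C[a,b]$.

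First, to establish Grinblyum-James orthogonality of $\{\mathfrak{M}_n\}_{n=0}^{\infty}$, I would note that the projection from $\mathfrak{G}_n\oplus\mathfrak{L}_{n,m}$ onto $\mathfrak{G}_n$ parallel to $\mathfrak{L}_{n,m}$ is precisely the restriction of $S_n$, so
\[
\left(\widehat{\mathfrak{G}_n,\mathfrak{L}_{n,m}}\right)=\frac{1}{\|S_n|_{\mathfrak{G}_n\oplus\mathfrak{L}_{n,m}}\|}\geq\frac{1}{\|S_n\|}=1.
\]
Since inclinations never exceed $1$, equality holds for every $n\geq 0$ and $m\geq 1$, hence the Grinblyum-James index satisfies $\gamma\left(\{\mathfrak{M}_n\}_{n=0}^{\infty}\right)=1$ and $\{\mathfrak{M}_n\}_{n=0}^{\infty}$ is orthogonal in the sense of~(\ref{orthogonalg}).

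Next I would show by contradiction that this decomposition cannot be Schauder-Orlicz. Suppose it were, with some Orlicz function $\Phi$. By Theorem~\ref{uncond}, every Schauder-Orlicz decomposition is unconditional with constant $1$. But a one-dimensional Schauder decomposition is unconditional if and only if the underlying Schauder basis is unconditional, and it is classical (and explicitly recalled in the paragraph preceding Subsection~2.2) that $C[a,b]$ admits no unconditional basis. This contradiction finishes the argument.

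The only real obstacle is a point of bookkeeping: one must verify that the projection on $\mathfrak{G}_n\oplus\mathfrak{L}_{n,m}$ really coincides with the restriction of the global partial-sum operator $S_n$, and that monotonicity of the Faber-Schauder basis forces equality (not just a lower bound) in the inclination formula. Both facts are standard, and together they supply the rigorous passage from basis monotonicity to the orthogonality condition $\gamma=1$, giving a clean separation between the Grinblyum-James orthogonality and the Schauder-Orlicz property.
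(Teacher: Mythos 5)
Your proposal is correct and follows essentially the same route as the paper: take the one-dimensional spans of the classical (Faber--)Schauder basis of $C[a,b]$, note that this decomposition is Grinblyum--James orthogonal, and conclude it cannot be Schauder--Orlicz because Theorem~\ref{uncond} would force unconditionality (with constant $1$), contradicting the nonexistence of unconditional bases in $C[a,b]$. The only difference is that you derive the orthogonality explicitly from monotonicity of the basis via the inclination formula $(\widehat{\mathfrak{G}_n,\mathfrak{L}_{n,m}})=1/\|S_n|_{\mathfrak{G}_n\oplus\mathfrak{L}_{n,m}}\|$, whereas the paper simply cites the classical references for the orthogonality of Schauder's basis; your bookkeeping there is sound.
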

\begin{proof}
  There exists an orthogonal basis in space $C[a,b]$~\cite{Singer3}, e.g. a classical basis of Schauder~\cite{Schauder}, see also~\cite{Singer1}.
  Since unconditional bases do not exist in $C[a,b]$, this basis is conditional.
  Denote by $\{\mathfrak{M}_n\}_{n=0}^{\infty}$ its one dimensional linear spans. Then obviously $\{\mathfrak{M}_n\}_{n=0}^{\infty}$ is, on the one hand, orthogonal.
  On the other hand, by virtue of Theorem~\ref{uncond} we have that
  $\{\mathfrak{M}_n\}_{n=0}^{\infty}$ is not a Schauder-Orlicz decomposition.
\end{proof}

For any Schauder-Orlicz decomposition in a Banach space $X$ we have the following result.

\begin{thm}\label{orthogonal}
Let $\{\mathfrak{M}_n\}_{n=0}^{\infty}$ be a Schauder-Orlicz decomposition with an Orlicz function $\Phi$  in $X$. Then the following is true.
\begin{enumerate}
\item $\{\mathfrak{M}_n\}_{n=0}^{\infty}$ is monotone.
  \item For any $n,m$ and all $x_{i_j}\in \mathfrak{M}_{i_j}$ $(j=0,\ldots, n)$,  $x_{l_j}\in \mathfrak{M}_{l_j}$ $(j=0,\ldots, m)$,
  satisfying $\{i_1,\ldots, i_n\} \cap \{l_1,\ldots, l_m\}=\emptyset$, we have
  $$ \left\| \sum\limits_{j=0}^{n} x_{i_j} \right\| \leq \left\| \sum\limits_{j=0}^{n} x_{i_j} + \sum\limits_{j=0}^{m} x_{l_j}\right\|$$
  (i.e.  $\{\mathfrak{M}_n\}_{n=0}^{\infty}$ is Singer-orthogonal, see remark 3 of the last section).
  \item For any $n,m$ such that $n\neq m$ we have $(\widehat{\mathfrak{M}_n,\mathfrak{M}_m})=1.$
  \item For any $n\geq 0$ and $m\geq 1$ we have
  $$\left(\widehat{\mathfrak{G}_n,\mathfrak{L}_{n,m}}\right)=1,$$
  where subspaces $\mathfrak{G}_n$ and $\mathfrak{L}_{n,m}$ are defined in Theorem~\ref{gg}.

  Thus $\gamma\left(\{\mathfrak{M}_n\}_{n=0}^{\infty}\right)=1$, i.e.  $\{\mathfrak{M}_n\}_{n=0}^{\infty}$ is orthogonal.
\end{enumerate}
\end{thm}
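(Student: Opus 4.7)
The plan is to derive all four assertions from a single observation about the Luxemburg-type formula~(\ref{Schauder-Orlicz}). Concretely, I will first prove the following key lemma: given finitely many elements $a_0,\ldots,a_n\in X$ with $a_k\in \mathfrak{M}_{i_k}$, together with additional elements $b_0,\ldots,b_m$ with $b_j\in \mathfrak{M}_{l_j}$ and $\{i_0,\ldots,i_n\}\cap\{l_0,\ldots,l_m\}=\emptyset$, one has
\[
\left\|\sum_{k=0}^{n} a_k\right\| \leq \left\|\sum_{k=0}^{n} a_k+\sum_{j=0}^{m} b_j\right\|.
\]
To see this, note that the a.s.c.p. $\{P_q\}$ applied to $y+z:=\sum_k a_k+\sum_j b_j$ gives $\|P_{i_k}(y+z)\|=\|a_k\|$, $\|P_{l_j}(y+z)\|=\|b_j\|$, and zero on every other index, while for $y=\sum_k a_k$ alone only the $\|a_k\|$ survive. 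Since $\Phi\geq 0$, every $\rho>0$ admissible in the infimum defining $\|y+z\|$ — that is, satisfying $\sum_k\Phi(\|a_k\|/\rho)+\sum_j\Phi(\|b_j\|/\rho)\leq 1$ — is automatically admissible in the infimum defining $\|y\|$. Hence the infimum for $\|y\|$ cannot be larger.

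Given this lemma, the four claims follow directly. Statement (2) is merely the lemma restated, so Singer-orthogonality is immediate. Statement (1) is the special case of the lemma where $\{i_k\}=\{0,\ldots,n\}$ and $\{l_j\}=\{n+1,\ldots,n+m\}$: the partial sums of $\sum P_n x$ are nondecreasing in norm, which is exactly monotonicity. For (3), take any $x\in \mathfrak{M}_n$ with $\|x\|=1$ and any $y\in \mathfrak{M}_m$ with $n\neq m$; applying the lemma to $x$ and $-y\in \mathfrak{M}_m$ yields $1=\|x\|\leq \|x-y\|$, so $(\widehat{\mathfrak{M}_n,\mathfrak{M}_m})\geq 1$, and combined with the universal upper bound $(\widehat{\mathfrak{M}_n,\mathfrak{M}_m})\leq 1$ we obtain equality.

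For (4), I would first note that $\mathfrak{G}_n=\mathfrak{M}_0\oplus\cdots\oplus\mathfrak{M}_n$ and $\mathfrak{L}_{n,m}=\mathfrak{M}_{n+1}\oplus\cdots\oplus\mathfrak{M}_{n+m}$ are already closed (as finite direct sums of closed subspaces with bounded coordinate projections), so no closure-taking is needed and every element of these spans decomposes as a \emph{finite} sum of components from the respective $\mathfrak{M}_i$. Thus for arbitrary $u=\sum_{i=0}^{n}P_i u\in\mathfrak{G}_n$ with $\|u\|=1$ and $v=\sum_{i=n+1}^{n+m}P_i v\in \mathfrak{L}_{n,m}$, the index sets $\{0,\ldots,n\}$ and $\{n+1,\ldots,n+m\}$ are disjoint, so the lemma applied to $u$ and $-v$ yields $1=\|u\|\leq\|u-v\|$. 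Taking the infimum over such $u,v$ gives $(\widehat{\mathfrak{G}_n,\mathfrak{L}_{n,m}})\geq 1$, hence equals $1$, and therefore $\gamma(\{\mathfrak{M}_n\}_{n=0}^{\infty})=1$.

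The main obstacle is essentially bookkeeping: making precise that adjoining new coordinates to an element in a Luxemburg-type norm can only increase the infimum, and verifying that $\mathfrak{G}_n$ and $\mathfrak{L}_{n,m}$ are genuine (closed) finite direct sums so that disjointness of support is well-defined. Once these points are handled, the rest consists of specializations of the lemma with no further analytic input.
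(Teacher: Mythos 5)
Your proposal is correct and follows essentially the same route as the paper: both rest on the observation that adjoining coordinates with disjoint support can only enlarge the Luxemburg-type infimum in~(\ref{Schauder-Orlicz}), and then read off monotonicity, Singer-orthogonality and the inclinations $(\widehat{\mathfrak{M}_n,\mathfrak{M}_m})=(\widehat{\mathfrak{G}_n,\mathfrak{L}_{n,m}})=1$ from the coordinate representation of the norm. Your packaging of this as a single key lemma (with the explicit closedness of $\mathfrak{G}_n$, $\mathfrak{L}_{n,m}$ and the trivial upper bound on the inclination) merely spells out what the paper computes directly and calls ``straightforward'' for statement~2.
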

\begin{proof}
Consider any Schauder-Orlicz decomposition $\{\mathfrak{M}_n\}_{n=0}^{\infty}$  with an Orlicz function $\Phi$ and the a.s.c.p. $\{P_n\}_{n=0}^{\infty}$ in a Banach space $X$.
  Then, by definition of Schauder-Orlicz decomposition for all $n,m$ we have
  \begin{align*}
   \left\|\sum\limits_{j=0}^{n} y_j\right\| &= \inf\left\{\rho >0: \sum\limits_{j=0}^{n} \Phi\left(\frac{\|y_j\|}{\rho}\right)\leq 1\right\} \\
    & \leq \inf\left\{\rho >0: \sum\limits_{j=0}^{n+m} \Phi\left(\frac{\|y_j \|}{\rho}\right)\leq 1\right\}=\left\|\sum\limits_{j=0}^{n+m} y_j\right\|
  \end{align*}
  for any $y_j\in \mathfrak{M}_j,$ $j=1,\ldots,n+m$,
  which means that $\{\mathfrak{M}_n\}_{n=0}^{\infty}$ is monotone.

The proof of the second statement is straightforward.

Denote by  $\{P_n\}_{n=0}^{\infty}$ the a.s.c.p. of $\{\mathfrak{M}_n\}_{n=0}^{\infty}$.
To prove the third statement we note that for any $j,m$ such that $j\neq m$
\begin{align*}
  (\widehat{\mathfrak{M}_j,\mathfrak{M}_m}) & = \inf\limits_{x\in \mathfrak{M}_j, y\in \mathfrak{M}_m, \|x\|=1} \|x-y\|\\
   & =\inf\limits_{x\in \mathfrak{M}_j, y\in \mathfrak{M}_m, \|x\|=1} \|\{\|P_n(x-y)\|\}_{n=0}^{\infty}\|_{\ell_{\Phi}} \\
   & = \inf\limits_{x\in \mathfrak{M}_j, y\in \mathfrak{M}_m, \|x\|=1} \|\{\|P_n(P_j x-P_my)\|\}_{n=0}^{\infty}\|_{\ell_{\Phi}}\\
   & =  \inf\limits_{x\in \mathfrak{M}_j, y\in \mathfrak{M}_m, \|x\|=1} \|\left(0,\ldots,0,\smash[b]{\! \underbrace{\|x\|\,}_\text{$j^{th}$ place}},0,\ldots,0,\smash[b]{\! \underbrace{\|y\|\,}_\text{$m^{th}$ place}},0, \ldots\right)\|_{\ell_{\Phi}}\\
   &=1.
\end{align*}

Furthermore, since $\{\mathfrak{M}_n\}_{n=0}^{\infty}$ is a Schauder-Orlicz decomposition in $X$ with the a.s.c.p. $\{P_n\}_{n=0}^{\infty}$ we have for any $j\geq 0$ and $m\geq 1$  that
\begin{align*}
  & \left(\widehat{\mathfrak{G}_j,\mathfrak{L}_{j,m}}\right)= \inf\limits_{x\in \mathfrak{G}_j, y\in \mathfrak{L}_{j,m}, \|x\|=1} \|x-y\|\\
  &\qquad\:=\inf\limits_{x\in \mathfrak{G}_j, y\in \mathfrak{L}_{j,m}, \|x\|=1} \|\{\|P_n(x-y)\|\}_{n=0}^{\infty}\|_{\ell_{\Phi}} \\
   & =  \inf\limits_{x\in \mathfrak{M}_j, y\in \mathfrak{M}_m, \|x\|=1} \|\left(\|P_0 x\|,\ldots,\|P_jx\|,\|P_{j+1}y\|, \ldots ,\|P_{j+m}y\|,0,0, \ldots\right)\|_{\ell_{\Phi}}\\
   &\qquad\:=1,
\end{align*}
 where subspaces $\mathfrak{G}_n$ and $\mathfrak{L}_{n,m}$ are defined in Theorem~\ref{gg}.
 Hence $\{\mathfrak{M}_n\}_{n=0}^{\infty}$ is orthogonal.
\end{proof}

Combining Theorem~\ref{orthogonal} with Proposition~\ref{schauder basis} we arrive at the following observation.
\begin{prop}\label{subset}
  The set of all Schauder-Orlicz decompositions is strictly contained in the set of all Grinblyum-James orthogonal Shauder decompositions.
\end{prop}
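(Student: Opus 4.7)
The plan is to prove the two halves of "strictly contained" separately: first inclusion, then strictness. Since both halves are essentially already assembled in the preceding results, the proof will be very short, and the main task is to articulate the combination cleanly.

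For the inclusion, I would invoke Theorem~\ref{orthogonal}, part 4, directly. That theorem says that if $\{\mathfrak{M}_n\}_{n=0}^{\infty}$ is any Schauder-Orlicz decomposition with Orlicz function $\Phi$ in a Banach space $X$, then for all $n\geq 0$ and $m\geq 1$ we have $(\widehat{\mathfrak{G}_n,\mathfrak{L}_{n,m}})=1$, and hence $\gamma(\{\mathfrak{M}_n\}_{n=0}^{\infty})=1$. By the definition~(\ref{orthogonalg}) this is exactly Grinblyum-James orthogonality of $\{\mathfrak{M}_n\}_{n=0}^{\infty}$. So every Schauder-Orlicz decomposition belongs to the class of Grinblyum-James orthogonal Schauder decompositions.

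For the strictness, I would exhibit a Grinblyum-James orthogonal Schauder decomposition which fails to be Schauder-Orlicz. This is precisely Proposition~\ref{schauder basis}: the one-dimensional subspaces spanned by elements of the classical Schauder basis of $C[a,b]$ form an orthogonal Schauder decomposition (in the Grinblyum-James sense) of $C[a,b]$, but since $C[a,b]$ contains no unconditional basis and Theorem~\ref{uncond} forces every Schauder-Orlicz decomposition to be unconditional (indeed, with constant $1$), this decomposition cannot be Schauder-Orlicz. Hence the inclusion is strict.

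I do not anticipate any real obstacle, as both ingredients are stated in the immediate context: the only care needed is to make explicit that Proposition~\ref{schauder basis} exhibits the witness of strict inclusion and that Theorem~\ref{orthogonal} supplies the inclusion in the other direction. The write-up can therefore be compressed to two or three sentences referencing these two results, with no further calculation.
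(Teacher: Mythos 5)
Your proposal is correct and matches the paper's own argument exactly: the inclusion is Theorem~\ref{orthogonal}(4) (which gives $\gamma(\{\mathfrak{M}_n\}_{n=0}^{\infty})=1$ for any Schauder-Orlicz decomposition), and strictness is witnessed by Proposition~\ref{schauder basis}, whose conditional Schauder-basis decomposition of $C[a,b]$ is Grinblyum-James orthogonal but, by Theorem~\ref{uncond}, cannot be Schauder-Orlicz. Nothing is missing; this is precisely how the paper combines the two results.
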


\subsection{To the question of uniqueness of $\ell_{\Phi}$-decompositions and Schauder-Orlicz decompositions}

For any $\ell_{\Phi}$-decomposition in a Banach space $X$ we have, using ideas of the proof of Theorem~\ref{orthogonal}, the following result.

\begin{prop}\label{ellphi1}
Let $\{\mathfrak{M}_n\}_{n=0}^{\infty}$ be an $\ell_{\Phi}$-decomposition with constants $c_1,$ $c_2$ in $X$. Then the following is true.
\begin{enumerate}
  \item For any $n,m$ and all $x_{i_j}\in \mathfrak{M}_{i_j}$ $(j=0,\ldots, n)$,  $x_{l_j}\in \mathfrak{M}_{l_j}$ $(j=0,\ldots, m)$,
  satisfying $\{i_1,\ldots, i_n\} \cap \{l_1,\ldots, l_m\}=\emptyset$, we have
  $$ \left\| \sum\limits_{j=0}^{n} x_{i_j} \right\| \leq \frac{c_2}{c_1} \left\| \sum\limits_{j=0}^{n} x_{i_j} + \sum\limits_{j=0}^{m} x_{l_j}\right\|.$$
  \item For all $n,m$ and  for any $y_j\in \mathfrak{M}_j,$ $j=1,\ldots,n+m$, we have that
  $$\left\|\sum\limits_{j=0}^{n} y_j\right\|\leq \frac{c_2}{c_1} \left\|\sum\limits_{j=0}^{n+m} y_j\right\|.$$
\end{enumerate}

\end{prop}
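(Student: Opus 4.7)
The plan is to mimic the argument from Theorem~\ref{orthogonal}, replacing the exact Schauder--Orlicz identity by the two-sided estimate~(\ref{Hilbert-Bessel}), and to deduce both statements from the monotonicity of the Orlicz norm with respect to pointwise domination of nonnegative sequences.

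First I would fix the data in (1). Set $y=\sum_{j=0}^{n} x_{i_j}$ and $z=\sum_{j=0}^{n} x_{i_j}+\sum_{j=0}^{m} x_{l_j}$. Since the index sets are disjoint and $\{P_k\}_{k=0}^{\infty}$ is the a.s.c.p.\ of $\{\mathfrak{M}_n\}_{n=0}^{\infty}$, the projections $P_k y$ vanish outside $\{i_0,\ldots,i_n\}$ and take the values $x_{i_j}$ on that set, while $P_k z$ agrees with $P_k y$ on $\{i_0,\ldots,i_n\}$ and equals $x_{l_j}$ on $\{l_0,\ldots,l_m\}$. Applying the right-hand inequality of~(\ref{Hilbert-Bessel}) to $y$ gives
\[
\|y\|\;\leq\; c_2\,\bigl\|\{\|P_k y\|\}_{k=0}^{\infty}\bigr\|_{\ell_{\Phi}}.
\]
Because $\Phi$ is nondecreasing, the Luxemburg-type norm $\|\cdot\|_{\ell_{\Phi}}$ is monotone on nonnegative sequences, so $\{\|P_k y\|\}_{k}\leq \{\|P_k z\|\}_{k}$ pointwise yields
\[
\bigl\|\{\|P_k y\|\}_{k=0}^{\infty}\bigr\|_{\ell_{\Phi}}\;\leq\; \bigl\|\{\|P_k z\|\}_{k=0}^{\infty}\bigr\|_{\ell_{\Phi}}.
\]
Then the left-hand inequality of~(\ref{Hilbert-Bessel}) applied to $z$ gives $\bigl\|\{\|P_k z\|\}_{k=0}^{\infty}\bigr\|_{\ell_{\Phi}}\leq \frac{1}{c_1}\|z\|$, and the three estimates chain together to the desired bound $\|y\|\leq \frac{c_2}{c_1}\|z\|$.

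For (2) I would simply observe that it is the special case of (1) corresponding to the disjoint blocks of indices $\{0,1,\ldots,n\}$ and $\{n+1,\ldots,n+m\}$, setting $x_{i_j}=y_j$ for $j=0,\ldots,n$ and $x_{l_j}=y_{n+j}$ for $j=1,\ldots,m$. Alternatively, one can repeat the monotonicity argument verbatim on the sequence $\{\|y_j\|\}_{j=0}^{n+m}$, since truncating after index $n$ only drops nonnegative terms of the Orlicz norm.

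There is no real obstacle in either step; the only point that requires a line of justification (but no calculation) is the monotonicity of $\|\cdot\|_{\ell_{\Phi}}$ on nonnegative sequences, which is immediate from the definition of the Luxemburg norm and the monotonicity of the Orlicz function $\Phi$. The structural content of the proposition is therefore that~(\ref{Hilbert-Bessel}) alone forces a quasi-monotonicity of the partial sums with the universal constant $c_2/c_1$, in complete analogy with the fact (proved in Theorem~\ref{orthogonal}) that Schauder--Orlicz decompositions, corresponding to $c_1=c_2=1$, are monotone.
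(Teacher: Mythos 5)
Your proof is correct and follows essentially the route the paper intends: the paper gives no detailed argument, stating only that the result follows ``using ideas of the proof of Theorem~\ref{orthogonal}'', and your chain of the right-hand inequality of~(\ref{Hilbert-Bessel}) applied to the shorter sum, monotonicity of the Luxemburg norm on nonnegative sequences, and the left-hand inequality applied to the longer sum is exactly that adaptation, with (2) correctly reduced to (1) via the disjoint blocks $\{0,\ldots,n\}$ and $\{n+1,\ldots,n+m\}$.
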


Next we discuss the uniqueness of Schauder-Orlicz decompositions and $\ell_{\Phi}$-decompositions in Hilbert spaces.
\begin{prop}\label{uniq}
  (i) Every Schauder-Orlicz decomposition with preassigned dimensions in a Hilbert space is unique, up to an isometry.

  (ii) Every  $\ell_2$-decomposition with preassigned dimensions in
a Hilbert space is unique, up to an isomorphism.
\end{prop}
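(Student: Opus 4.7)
The plan is to leverage the classical uniqueness of orthonormal bases in Hilbert spaces of a prescribed dimension, after reducing each case to it via the structural results established above. For (i) the decisive step is to recognise that a Schauder--Orlicz decomposition inside a Hilbert space is automatically an inner-product orthogonal decomposition (with $\Phi(t)=t^{2}$); for (ii) the decisive step is to identify $H$ isomorphically with a Hilbert direct sum of the pieces of the decomposition and then permute the pieces.

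For (i), let $\{\mathfrak{M}_n\}_{n=0}^{\infty}$ be a Schauder--Orlicz decomposition of a Hilbert space $H$ with Orlicz function $\Phi$ and a.s.c.p.\ $\{P_n\}$. Theorem~\ref{orthogonal} gives $(\widehat{\mathfrak{G}_n,\mathfrak{L}_{n,m}})=1$ for every $n\geq 0$, $m\geq 1$. Since $(\widehat{\mathfrak{M},\mathfrak{N}})=1/\|P\|$, where $P$ is the projection onto $\mathfrak{M}$ parallel to $\mathfrak{N}$, every partial sum projection $\sum_{i=0}^{n}P_i$ has norm one on $H$, and in a Hilbert space a norm-one projection is necessarily the orthogonal projection. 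Hence the $\mathfrak{M}_n$ are pairwise orthogonal in the inner-product sense and Parseval yields $\|x\|^{2}=\sum_n \|P_n x\|^{2}$ for every $x\in H$. Combined with the defining identity~(\ref{Schauder-Orlicz}) and the fact that each $\mathfrak{M}_n$ is nonzero, this forces the Luxemburg $\ell_{\Phi}$-norm to agree with the $\ell_{2}$-norm on every finitely supported nonnegative sequence, whence $\Phi(t)=t^{2}$. Given two such decompositions $\{\mathfrak{M}_n\},\{\mathfrak{N}_n\}$ with $\dim\mathfrak{M}_n=\dim\mathfrak{N}_n$, I then pick orthonormal bases $\{e_{n,k}\}_k$, $\{f_{n,k}\}_k$ of $\mathfrak{M}_n$, $\mathfrak{N}_n$ indexed by a common set; by the orthogonality just established these assemble into orthonormal bases of $H$, and the linear map $U e_{n,k}=f_{n,k}$ extends to a unitary operator on $H$ with $U\mathfrak{M}_n=\mathfrak{N}_n$ for every $n$.

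For (ii), Theorem~\ref{seqofspace} supplies isomorphisms $J_M\colon H\to(\mathfrak{M}_0,\mathfrak{M}_1,\ldots)_{\ell_{2}}$ and $J_N\colon H\to(\mathfrak{N}_0,\mathfrak{N}_1,\ldots)_{\ell_{2}}$. Since each $\mathfrak{M}_n,\mathfrak{N}_n$ is itself a Hilbert space, these $\ell_{2}$-sums coincide isometrically with the Hilbert direct sums $\bigoplus_n\mathfrak{M}_n$ and $\bigoplus_n\mathfrak{N}_n$. The matching of dimensions yields unitaries $U_n\colon\mathfrak{M}_n\to\mathfrak{N}_n$, which assemble into a unitary $U=\bigoplus_n U_n$ between the two Hilbert direct sums; the composition $T:=J_N^{-1}\,U\,J_M$ is an isomorphism of $H$ onto itself with $T\mathfrak{M}_n=\mathfrak{N}_n$ for all $n$, as required.

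I expect the real work to be concentrated in part (i), specifically the two implications that Grinblyum--James inclination one in a Hilbert space upgrades to ordinary Hilbert orthogonality, and that equality of the Luxemburg norms on all finitely supported sequences forces $\Phi(t)=t^{2}$ (the two-point case of the norm identity produces the functional equation $\Phi(t)+\Phi(\sqrt{1-t^{2}})=1$ on $[0,1]$, from which the identification follows). Once this is settled, both assertions reduce to transparent bookkeeping with orthonormal (respectively merely matched) bases of the pieces.
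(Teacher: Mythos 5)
Your argument is correct, but it is genuinely different from the paper's. The paper proves (ii) by quoting earlier results: in a Hilbert space a Schauder decomposition is unconditional iff it is a Riesz basis of subspaces iff it is an $\ell_2$-decomposition, and unconditional decompositions with preassigned dimensions are unique up to isomorphism (both cited from~\cite{Marchenko1},~\cite{Marchenko2}); and (i) by the cited equivalence ``Schauder--Orlicz with $\Phi(t)=t^2$ $\Leftrightarrow$ orthogonal''. You instead give a self-contained reduction: for (i) you use Theorem~\ref{orthogonal} (inclination one, or equivalently monotonicity) to see that the partial-sum projections have norm one on a dense set of finite sums, hence on $H$, so they are self-adjoint, the pieces are pairwise orthogonal in the inner-product sense, and a unitary is built by matching orthonormal bases of the pieces; for (ii) you use the $\ell_2$-sum model from Theorem~\ref{seqofspace} (noting that $J_M$ carries $\mathfrak{M}_n$ onto the $n$-th coordinate block) and conjugate a block-diagonal unitary. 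This buys independence from the external references at the cost of redoing, inside the Hilbert-space setting, facts the paper simply imports; both routes are sound, and your construction of the intertwining operator is the same bookkeeping the cited results encapsulate.

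Two small remarks. First, the identification $\Phi(t)=t^2$ is not needed for either conclusion (your unitary only uses inner-product orthogonality of the pieces), and your parenthetical justification of it is too quick: the two-point functional equation $\Phi(t)+\Phi(\sqrt{1-t^2})=1$ together with convexity does \emph{not} force $\Phi(t)=t^2$ (e.g. $\Phi(t)=t^2+\varepsilon\sin(2\pi t^2)$ satisfies it for small $\varepsilon>0$); one needs the $n$-point relations, which give additivity of $g(u)=\Phi(\sqrt{u})$ on $[0,1]$ and hence $g(u)=u$ --- this is in effect the content of Remark~\ref{exist}. Second, in the step ``every partial sum projection has norm one'' make explicit the passage from the inclinations $\bigl(\widehat{\mathfrak{G}_n,\mathfrak{L}_{n,m}}\bigr)=1$ (which control the projection only on $\mathfrak{G}_n\oplus\mathfrak{L}_{n,m}$) to all of $H$, via density of finite sums; the monotonicity statement of Theorem~\ref{orthogonal} gives this directly.
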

\begin{proof}
Every  Schauder decomposition in a Hilbert space is unconditional if and only if it is a Riesz basis of subspaces if and only if it is an $\ell_2$-decomposition~\cite{Marchenko1}.
Since unconditional Schauder decomposition with preassigned dimensions in a Hilbert space
 is unique, up to an isomorphism~\cite{Marchenko1},~\cite{Marchenko2}, we have (ii).

To prove (i) it remains to note that
  Schauder decomposition in a Hilbert space is orthogonal if and only if it is a Schauder-Orlicz decomposition with Orlicz function $t^2$~\cite{Marchenko1}.
\end{proof}

\begin{rmk}\label{exist}
Note that by virtue of Theorem~\ref{uncond} in a Hilbert space from the set of all $\ell_{\Phi}$-decompositions only $\ell_2$-decompositions exist.
Furthermore, by virtue of Theorem~\ref{uncond} in a Hilbert space only Schauder-Orlicz decompositions with an Orlicz function $t^2$ exist.
\end{rmk}

\begin{rmk}
In~\cite{LindenstraussPelcz} Lindenstrauss and Pelczynski found that all unconditional Schauder bases
in $\ell_1$ (resp. $c_0$) are equivalent to the canonical basis of $\ell_1$ (resp. $c_0$).
Moreover, one year later Lindenstrauss and Zippin established that the only Banach spaces, which have a unique, up to equivalence, normalized unconditional basis are $\ell_2$, $\ell_1$
and $c_0$~\cite{LindenstraussZipp}.

Also in~\cite{LindenstraussPelcz}  the following remarkable result was proved (see Corollary~$8$ in~\cite{LindenstraussPelcz}):
If $X$ is an $\mathcal{L}_1$-space (respectively $\mathcal{L}_{\infty}$-space) and if
$\{\mathfrak{M}_n\}_{n=0}^{\infty}$ is an unconditional Schauder decomposition of $X$, then
$$X\cong \left(\mathfrak{M}_0,\mathfrak{M}_1,\mathfrak{M}_2,\ldots \right)_{\ell_1},$$
or
$$X\cong \left(\mathfrak{M}_0,\mathfrak{M}_1,\mathfrak{M}_2,\ldots \right)_{c_0},$$
respectively.
Particularly it means that all unconditional Schauder decompositions in $\ell_1$ or $L_1(0,1)$ are $\ell_1$-decompositions and all unconditional Schauder decompositions in $c_0$ or
$C[0,1]$ are
$c_0$-decompositions.
\end{rmk}

\section{Existence of Schauder-Orlicz decompositions and $\ell_p$-decompositions in spaces $L_p$, $1\leq p<\infty$}

Since all separable $L_p$, $1\leq p<\infty,$  with purely non-atomic measures are isometric to $L_p(0,1)$ with standard Lebesgue measure~\cite{Beauzamy} and since by Proposition~\ref{isometric}
 Schauder-Orlicz decompositions are invariant under isometry,
we focus ourselves on spaces $L_p(0,1)$, $1\leq p<\infty,$ with standard Lebesgue measure $\mu$.
Construction of Schauder-Orlicz decompositions in other types of spaces $L_p$ may be done similarly to the following.
\begin{thm}\label{SODLp}
  Consider the family of sets $\left\{A_n \right\}_{n=0}^{\infty}$ with the following properties:
\begin{enumerate}
  \item  $A_i\cap A_j=\emptyset$ if  $i\neq j$;
  \item $\bigcup\limits_{n=0}^{\infty} A_n=(0,1)$;
  \item $\mu(A_n)>0$ for all $n$ and $\mu(A_n)\to 0$ as $n\to \infty$.
\end{enumerate}
Further fix any $p\geq 1$ and consider the corresponding projections $P_n$ in $L_p(0,1)$. For every $f\in L_p(0,1)$ we define
$$P_n f(t)= \left \{
\begin{array}{l}
f(t),\quad t \in A_n,\\
0,\quad\text{otherwise}.\\
\end{array}
\right .$$

Then $\left\{P_n L_p(0,1)\right\}_{n=0}^{\infty}$ is a Schauder-Orlicz decomposition with an Orlicz function $t^p$  in $L_p(0,1)$.
\end{thm}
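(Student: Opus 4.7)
The plan is to verify the two things required: first, that $\{P_n L_p(0,1)\}_{n=0}^{\infty}$ is indeed a Schauder decomposition of $L_p(0,1)$, and second, that it satisfies the Schauder-Orlicz identity~(\ref{Schauder-Orlicz}) with $\Phi(t)=t^p$. Both reductions use only the pointwise disjointness of the $A_n$ together with $\bigcup_n A_n=(0,1)$.

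First, I would verify that $\{P_n\}_{n=0}^{\infty}$ is an admissible sequence of coordinate projections. Each $P_n$ is clearly linear with $\|P_n f\|_p \leq \|f\|_p$, and $P_i P_j=\delta_i^j P_i$ holds because $A_i\cap A_j=\emptyset$ for $i\neq j$. For $f\in L_p(0,1)$, the partial sum $S_N f=\sum_{n=0}^{N} P_n f$ equals $f\cdot \chi_{B_N}$ where $B_N=\bigcup_{n=0}^{N} A_n$. Since $\{A_n\}$ partitions $(0,1)$, for a.e.\ $t\in(0,1)$ the function $\chi_{B_N}(t)\to 1$, so $|f-S_N f|^p \to 0$ a.e.\ and is dominated by $|f|^p\in L_1$; dominated convergence gives $S_N f\to f$ in $L_p$. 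For uniqueness, if $f=\sum_{n=0}^{\infty} g_n$ in $L_p$ with $g_n$ supported on $A_n$, then multiplying by $\chi_{A_k}$ and using the disjointness forces $g_k=f\chi_{A_k}=P_k f$.

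Second, I would compute the modular. Using $A_i\cap A_j=\emptyset$ and $\bigcup_n A_n=(0,1)$ together with countable additivity of the Lebesgue integral,
\begin{equation*}
\|f\|_p^p=\int_{(0,1)} |f|^p\,d\mu=\sum_{n=0}^{\infty}\int_{A_n} |f|^p\,d\mu=\sum_{n=0}^{\infty}\|P_n f\|_p^p.
\end{equation*}
For $\Phi(t)=t^p$ the condition $\sum_{n=0}^{\infty}\Phi(\|P_n f\|_p/\rho)\leq 1$ is equivalent to $\rho^p\geq \sum_{n=0}^{\infty}\|P_n f\|_p^p=\|f\|_p^p$, so the infimum on the left-hand side of~(\ref{Schauder-Orlicz}) equals exactly $\|f\|_p$. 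This is the Schauder-Orlicz identity for $\Phi(t)=t^p$.

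There is essentially no serious obstacle in this proof; it is a direct verification. The only care needed is the $L_p$-convergence of $S_N f$, which is handled by dominated convergence (the hypothesis $\mu(A_n)>0$ merely ensures each subspace $P_n L_p(0,1)$ is nontrivial, and $\mu(A_n)\to 0$ is automatic since $\sum_n \mu(A_n)=1$), and the interchange of integral and sum in the modular computation, which is immediate from disjointness.
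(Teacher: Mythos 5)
Your proof is correct and follows essentially the same route as the paper: verify the Schauder decomposition property from disjointness of the $A_n$ (the paper calls the convergence of partial sums ``obvious by construction'', which your dominated-convergence argument makes precise), and then use countable additivity of the Lebesgue integral to get $\sum_{n}\|P_nf\|_p^p=\|f\|_p^p$, which is exactly the Schauder-Orlicz identity for $\Phi(t)=t^p$. Your extra remarks (closedness of the ranges, uniqueness of the expansion, and that $\mu(A_n)\to 0$ is automatic while $\mu(A_n)>0$ only ensures nontriviality) are all accurate details the paper leaves to the reader.
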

\begin{proof}
  Indeed, it is easy to check that each linear subspace $P_n L_p(0,1)$, $n\geq 0,$ is closed.
  Further, the fact that for every $f\in L_p(0,1)$
  $$\sum\limits_{n=0}^{\infty} P_n f =f$$
  is obvious by construction.
  Finally, by the additivity of Lebesgue integral we have that for all  $f\in L_p(0,1)$
  $$\sum\limits_{n=0}^{\infty} \left\|P_n f\right\|^p = \sum\limits_{n=0}^{\infty} \int\limits_{A_n} |f(t)|^p dt = \int\limits_{0}^1 |f(t)|^p dt = \|f\|^p.$$
  This means that $\left\{P_n L_p(0,1)\right\}_{n=0}^{\infty}$ is a Schauder-Orlicz decomposition with an Orlicz function $t^p$  in $L_p(0,1)$.
\end{proof}

Taking into account their nature it is convenient to call Schauder decompositions from Theorem~\ref{SODLp} slicing Schauder decompositions.

\begin{rmk}
Each subspace $P_n L_p(0,1)$, $n\geq 0,$ is isometric to $L_p(0,1)$ and, hence, infinite dimensional.
So we give a simple construction of Schauder-Orlicz decomposition in $L_p$, each subspace of which is infinite dimensional.
\end{rmk}

Applying Theorem~\ref{seqofspaceSOD} we have the following simple corollary of Theorem~\ref{SODLp}.
\begin{corollary}
  The space $L_p$, $1\leq p<\infty,$ is isometric to
  $$\left(\mathfrak{M}_0,\mathfrak{M}_1,\mathfrak{M}_2,\ldots \right)_{\ell_{p}},$$
  where $\left\{\mathfrak{M}_n\right\}_{n=0}^{\infty}$ is slicing Schauder-Orlicz decomposition with an Orlicz function $t^p$  in $L_p$.
\end{corollary}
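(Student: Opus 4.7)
The plan is to invoke Theorem~\ref{seqofspaceSOD} directly with the Schauder-Orlicz decomposition supplied by Theorem~\ref{SODLp}. By Theorem~\ref{SODLp}, the slicing family $\{\mathfrak{M}_n\}_{n=0}^{\infty}$ with $\mathfrak{M}_n = P_n L_p(0,1)$ is a Schauder-Orlicz decomposition of $L_p(0,1)$ with Orlicz function $\Phi(t)=t^p$. Theorem~\ref{seqofspaceSOD} then yields that $L_p(0,1)$ is isometric to an $\ell_{\Phi}$-sum of some sequence of Banach spaces, and since $\ell_{\Phi} = \ell_p$ in this case, the statement is essentially a specialization. Only the identification of the summands as the $\mathfrak{M}_n$ themselves (rather than abstract isometric copies) requires a short verification, which I would record explicitly.

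For that verification, I would introduce the candidate isometry $T\colon L_p(0,1)\to (\mathfrak{M}_0,\mathfrak{M}_1,\ldots)_{\ell_p}$ defined by $Tf=\{P_n f\}_{n=0}^{\infty}$. That $T$ is norm-preserving is immediate from the Schauder-Orlicz defining identity (\ref{Schauder-Orlicz}) with $\Phi(t)=t^p$: for this choice of $\Phi$ the Luxemburg norm $|||\cdot|||$ on the sum space collapses to the usual $\ell_p$-norm of the scalar sequence $\{\|P_n f\|\}_{n=0}^{\infty}$, which by the Schauder-Orlicz identity equals $\|f\|_{L_p}$. Linearity and injectivity of $T$ are obvious, and well-definedness follows from the same identity. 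Surjectivity is the only substantive point: given any $\{f_n\}\in (\mathfrak{M}_0,\mathfrak{M}_1,\ldots)_{\ell_p}$, the partial sums $\sum_{n=0}^{N} f_n$ have bounded $L_p$-norms (via the norm-preservation just established), so by bounded completeness of $\{\mathfrak{M}_n\}_{n=0}^{\infty}$ — which holds by Corollary~\ref{bcsod} since $t^p$ is non-degenerate — the series $\sum_{n=0}^{\infty} f_n$ converges in $L_p(0,1)$ to some $f$ with $P_n f = f_n$, i.e. $Tf = \{f_n\}$.

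I do not expect any real obstacle: the result is essentially a repackaging of Theorem~\ref{SODLp}, Theorem~\ref{seqofspaceSOD} and Corollary~\ref{bcsod}. The only mildly nontrivial part is making the surjectivity step clean, but it is already handled by bounded completeness, and everything else reduces to the computation $\sum_{n=0}^{\infty}\|P_n f\|^p = \|f\|_{L_p}^p$ already performed in the proof of Theorem~\ref{SODLp}.
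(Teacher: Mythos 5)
Your proposal is correct and follows essentially the same route as the paper, which derives the corollary simply by combining Theorem~\ref{SODLp} with Theorem~\ref{seqofspaceSOD}; your explicit isometry $Tf=\{P_nf\}_{n=0}^{\infty}$ and the surjectivity argument via bounded completeness (Corollary~\ref{bcsod}) just make concrete the identification of the summands that the paper leaves implicit.
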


\begin{rmk}
While writing this paper the author found out that
the simplest slicing Schauder decomposition
appeared first in a paper~\cite{Chadwick} (Example 3.7) but in a different context.
It was constructed as a boundedly complete Schauder decomposition in $L_1(0,1)$.
\end{rmk}

Theorem~\ref{SODLp} yields also the constructions of $\ell_p$-decompositions in $L_p(0,1)$, $1\leq p<\infty$.
\begin{corollary}\label{conlp}
  Let $\{\mathfrak{M}_n\}_{n=0}^{\infty}$ be a Schauder-Orlicz decomposition from Theorem~\ref{SODLp} in $L_p(0,1)$.
  Assume that $S$ is an isomorphism in $L_p(0,1)$.
  Then $\{S \mathfrak{M}_n\}_{n=0}^{\infty}$ is an $\ell_p$-decomposition in $L_p(0,1)$.
\end{corollary}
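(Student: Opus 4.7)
The plan is to reduce Corollary~\ref{conlp} directly to Corollary~\ref{rel}. By Theorem~\ref{SODLp}, the slicing decomposition $\{\mathfrak{M}_n\}_{n=0}^{\infty}$ is a Schauder-Orlicz decomposition in $L_p(0,1)$ with Orlicz function $\Phi(t) = t^p$. Since $S$ is an isomorphism of $L_p(0,1)$ onto itself, the situation exactly matches the hypothesis of Corollary~\ref{rel} with $X = Y = L_p(0,1)$ and the given isomorphism $S: X \to Y$. Applying Corollary~\ref{rel} then yields that $\{S\mathfrak{M}_n\}_{n=0}^{\infty}$ is an $\ell_\Phi$-decomposition in $L_p(0,1)$, and the standard identification $\ell_\Phi = \ell_p$ for $\Phi(t) = t^p$ (the Luxemburg norm on $\ell_\Phi$ coincides with the usual $\ell_p$-norm in this case) finishes the argument.

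Because the conclusion is essentially a direct invocation of Corollary~\ref{rel}, there is no substantial obstacle. The only point worth spelling out, if one wishes to make the argument self-contained rather than cite Corollary~\ref{rel}, is that the coordinate projections of the transported decomposition are the conjugates $SP_n S^{-1}$ of the original projections from Theorem~\ref{SODLp}. This ensures that the two-sided estimate~(\ref{Hilbert-Bessel}) required for an $\ell_\Phi$-decomposition transfers through $S$: combining $\|Sx\| \leq \|S\|\|x\|$ and $\|x\| \leq \|S^{-1}\|\|Sx\|$ with the equality version of~(\ref{Hilbert-Bessel}) for $\{\mathfrak{M}_n\}$ produces constants $c_1 = \|S\|^{-1}$ and $c_2 = \|S^{-1}\|$ for the decomposition $\{S\mathfrak{M}_n\}_{n=0}^{\infty}$. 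This is exactly the computation already contained in the proof of Theorem~\ref{isometric}(2) and inherited by Corollary~\ref{rel}, so no new work is needed.
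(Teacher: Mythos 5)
Your proposal is correct and follows exactly the paper's own route: the paper proves Corollary~\ref{conlp} by a direct appeal to Corollary~\ref{rel}, applied to the slicing Schauder-Orlicz decomposition of Theorem~\ref{SODLp} with $\Phi(t)=t^p$, which is precisely what you do. One tiny imprecision in your optional self-contained aside: the transported constants should be $c_1=\left(\|S\|\,\|S^{-1}\|\right)^{-1}$ and $c_2=\|S\|\,\|S^{-1}\|$ (your $c_1=\|S\|^{-1}$, $c_2=\|S^{-1}\|$ bound $\|x\|$ rather than $\|Sx\|$), but since only the existence of some constants in~(\ref{Hilbert-Bessel}) is needed, this does not affect the validity of the argument.
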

\begin{proof}
  The proof follows from Corollary~\ref{rel}.
\end{proof}

We can obtain the following result on the existence of  $\ell_{\Phi}$-decompositions and Schauder-Orlicz decompositions in spaces $L_p$, $1\leq p<\infty$, similar to the situation in Hilbert spaces, see Remark~\ref{exist}.
\begin{thm}\label{existLp}
   From the set of all $\ell_{\Phi}$-decompositions in $L_p$  only $\ell_p$-decompositions exist.
Also in $L_p$ only Schauder-Orlicz decompositions with an Orlicz function $t^p$ exist.
\end{thm}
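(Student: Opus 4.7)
The plan is to reduce the problem to disjoint supports. Let $\{\mathfrak{M}_n\}_{n=0}^{\infty}$ be an arbitrary $\ell_{\Phi}$-decomposition of $L_p$ with a.s.c.p. $\{P_n\}_{n=0}^{\infty}$ and constants $c_1,c_2$. By Theorem~\ref{uncond} this decomposition is unconditional, and by~(\ref{Hilbert-Bessel}) any choice $f_n\in\mathfrak{M}_n$ with $\|f_n\|_p=1$ produces an unconditional basic sequence in $L_p$ equivalent to the canonical basis of $\ell_{\Phi}$. In particular the span of the $\mathfrak{M}_n$ contains a normalized unconditional basic sequence equivalent to the $\ell_{\Phi}$-basis.

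The crucial step is to build block vectors that behave like disjointly supported functions. Using $\overline{\bigoplus_n \mathfrak{M}_n}=L_p$ and $\|P_nx\|\to 0$ for every $x\in L_p$, a standard gliding-hump argument yields, for each $N$ and $\varepsilon>0$, pairwise disjoint finite sets $I_1,\ldots,I_N\subset\mathbb{N}$ together with unit vectors $g_k\in\bigoplus_{i\in I_k}\mathfrak{M}_i$ satisfying $\|g_k-h_k\|_p<\varepsilon$ for some disjointly supported unit functions $h_1,\ldots,h_N\in L_p$. For $\varepsilon$ small enough,
$$\left\|\sum_{k=1}^N a_kg_k\right\|_p \sim \left(\sum_{k=1}^N |a_k|^p\right)^{1/p}.$$
On the other hand, setting $Q_k=\sum_{i\in I_k}P_i$ one has $Q_k\bigl(\sum_j a_jg_j\bigr)=a_kg_k$, and applying~(\ref{Hilbert-Bessel}) to $x=\sum_k a_kg_k$ together with a block analog of the $\ell_{\Phi}$-estimate inherited from $\{\mathfrak{M}_n\}$ yields $\|\sum_k a_kg_k\|_p\sim\|(a_k)\|_{\ell_{\Phi}}$. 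Combining both estimates forces $\|(a_k)\|_{\ell_p}\sim\|(a_k)\|_{\ell_{\Phi}}$ on all finitely supported sequences, i.e.\ $\Phi(t)\sim t^p$ near zero, so the $\ell_{\Phi}$-decomposition is in fact an $\ell_p$-decomposition. For Schauder-Orlicz decompositions, $c_1=c_2=1$ in~(\ref{Hilbert-Bessel}), and by Theorem~\ref{seqofspaceSOD} the ambient norm is isometrically the $\ell_{\Phi}$-norm, so the same reasoning sharpens to $\Phi(t)=t^p$ modulo the standard normalization.

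The main obstacle is the passage from the $\ell_{\Phi}$-estimate on $\{\mathfrak{M}_n\}$ to the corresponding estimate on the block decomposition $\{\mathrm{Range}\,Q_k\}$. The direct application of~(\ref{Hilbert-Bessel}) yields the $\ell_{\Phi}$-norm of the mixed-index sequence $(|a_k|\,\|P_ig_k\|_p)_{i\in I_k,\,k}$, and factoring this into an $\ell_{\Phi}$-norm of $(a_k)_k$ is automatic only when $\Phi$ is a power; for general $\Phi$ one must renormalize the block vectors and invoke Theorem~\ref{seqofspace}. A secondary difficulty arises when some $\mathfrak{M}_n$ are finite-dimensional, in which case the gliding-hump construction must draw the approximately disjoint supports from blocks of several subspaces rather than from individual $\mathfrak{M}_n$.
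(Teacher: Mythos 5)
Your route is quite different from the paper's (the paper's proof is a two-line combination of Theorem~\ref{uncond} with the two-sided Khintchine-type estimates for unconditional decompositions in $L_p$ from Proposition~4.8 of \cite{Marchenko1}; there is no gliding hump), and the step your argument hinges on is exactly the step that fails. You claim that from $\overline{\bigoplus_n\mathfrak{M}_n}=L_p$ and $\|P_nx\|\to0$ one gets, for every $N$ and $\varepsilon$, disjoint finite blocks $I_1,\dots,I_N$ and unit vectors $g_k\in\bigoplus_{i\in I_k}\mathfrak{M}_i$ that are $\varepsilon$-close to disjointly supported functions. The fact $\|P_nx\|\to0$ (for fixed $x$, as $n\to\infty$) only lets you approximate one function by one finite block sum; to continue the induction you need a normalized function, essentially disjoint from the previous ones, whose projection onto the finitely many already-used subspaces is small \emph{in norm}. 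Those projections may have infinite rank, so weak nullity of shrinking-support functions gives nothing (and for $p=1$ even weak nullity fails), and in fact the desired blocks need not exist at all: since $\ell_2$ is complemented in $L_p$ for $1<p\neq2$, one has $L_p\cong L_p\oplus_2\ell_2$, and the coordinate decomposition of $L_p\oplus_2\ell_2$ (the $L_p$-summand together with the spans of the unit vectors $e_k$) is an $\ell_2$-decomposition, which Theorem~\ref{isometric} transports to a decomposition of $L_p$ itself; every block avoiding the single $L_p$-coordinate lies in a Hilbertian subspace, where by Khintchine one cannot find arbitrarily many almost disjointly supported unit vectors. So no argument that, like yours, uses only unconditionality, density of $\bigoplus_n\mathfrak{M}_n$ and the isomorphic two-sided $\ell_{\Phi}$-estimate can force $\ell_{\Phi}$-behaviour to coincide with $\ell_p$-behaviour; any proof has to exploit the specific $L_p$-estimates of \cite{Marchenko1} and, for the Schauder--Orlicz assertion, the exact constants $c_1=c_2=1$ in~(\ref{Hilbert-Bessel}).

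There is a second, independent gap, which you half-acknowledge but do not close: even granted the blocks $g_k$, the estimate $\|\sum_k a_kg_k\|\sim\|(a_k)\|_{\ell_{\Phi}}$ is not available. Inequality~(\ref{Hilbert-Bessel}) controls $\|x\|$ by the $\ell_{\Phi}$-norm of the fine sequence $(|a_k|\,\|P_ig_k\|)_{i\in I_k,\,k}$, and for a non-power $\Phi$ normalized block sequences in $\ell_{\Phi}$ are in general equivalent not to the canonical $\ell_{\Phi}$-basis but to the basis of a different Orlicz space $\ell_{\Psi}$ (classical Lindenstrauss--Tzafriri theory); renormalizing and invoking Theorem~\ref{seqofspace} does not repair this, it would at best yield $\Psi\sim t^p$ for the block function $\Psi$, which says nothing about $\Phi$. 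Finally, your closing remark inverts the real difficulty: finite-dimensional $\mathfrak{M}_n$ are the harmless case, since finite-rank projections do send weakly null sequences to norm-null ones; it is the infinite-dimensional subspaces, finitely many of which may already contain a whole $L_p(A)$ or a Hilbertian copy, that break the construction.
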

\begin{proof}
  The proof of this theorem is based on the combination of Theorem~\ref{uncond} with Proposition~4.8 from~\cite{Marchenko1} on two-sided estimates for unconditional Schauder decompositions
  in $L_p$ spaces, see also Corollary~2.4 in~\cite{Marchenko3}.
\end{proof}

Combining Theorem~\ref{uncond} with Corollary~8 of~\cite{LindenstraussPelcz} we arrive at the following conclusion.
\begin{thm}\label{existC}
   From the set of all $\ell_{\Phi}$-decompositions in $C$  only $c_0$-decompositions exist.
\end{thm}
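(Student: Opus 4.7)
The plan is to take an arbitrary $\ell_{\Phi}$-decomposition $\{\mathfrak{M}_n\}_{n=0}^{\infty}$ of $C$ with constants $c_1, c_2$ and show that the Orlicz function $\Phi$ must be degenerate; by the argument recalled in the proof of Theorem~\ref{bc}(ii), this forces $\ell_{\Phi}\cong \ell_{\infty}$ and therefore makes $\{\mathfrak{M}_n\}_{n=0}^{\infty}$ a $c_0$-decomposition, which is the desired conclusion.

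First I would invoke Theorem~\ref{uncond} to conclude that $\{\mathfrak{M}_n\}_{n=0}^{\infty}$ is an unconditional Schauder decomposition of $C$ (with constant $c_2/c_1$). Since $C$ is an $\mathcal{L}_{\infty}$-space, Corollary~8 of Lindenstrauss--Pelczynski~\cite{LindenstraussPelcz} then applies and yields
\[
C \;\cong\; \left(\mathfrak{M}_0, \mathfrak{M}_1, \mathfrak{M}_2, \ldots\right)_{c_0}.
\]
In particular, if $\{P_n\}_{n=0}^{\infty}$ is the a.s.c.p. of $\{\mathfrak{M}_n\}_{n=0}^{\infty}$, the quantities $\sup_n \|P_n x\|$ and $\|x\|_C$ are equivalent via the natural identification $x \leftrightarrow (P_n x)_n$.

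The decisive step is to combine this equivalence with the $\ell_{\Phi}$ two-sided estimate~(\ref{Hilbert-Bessel}). Picking any normalized sequence $x_n\in \mathfrak{M}_n$ with $\|x_n\|=1$ and forming partial sums $s_N=\sum_{n=0}^{N} x_n$, the $c_0$-isomorphism bounds $\|s_N\|_C$ by a constant independent of $N$, while the lower $\ell_{\Phi}$-estimate gives $\|s_N\|_C \geq c_1\,\left\|(1,\ldots,1,0,\ldots)\right\|_{\ell_{\Phi}}$ with $N+1$ ones. A direct Luxemburg-norm calculation evaluates the latter as $c_1/\Phi^{-1}\bigl(1/(N+1)\bigr)$, which tends to $\infty$ as $N\to\infty$ whenever $\Phi$ is non-degenerate. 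This contradicts the uniform upper bound, so $\Phi$ must be degenerate, and we are done.

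The step I expect to be most delicate is the Luxemburg-norm computation, namely verifying that non-degeneracy of $\Phi$ really implies $\Phi^{-1}(1/(N+1))\to 0$; this is conceptually simple but requires the precise convention for $\Phi^{-1}$ near zero. Beyond that, the argument amounts to chaining together two ready-made results, Theorem~\ref{uncond} and Corollary~8 of~\cite{LindenstraussPelcz}, applied to the same decomposition, with the $\ell_{\Phi}$ versus $c_0$ norm comparison producing the contradiction that rules out every non-degenerate $\Phi$.
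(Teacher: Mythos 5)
Your proposal is correct and rests on exactly the two ingredients the paper uses: Theorem~\ref{uncond} to get unconditionality and Corollary~8 of~\cite{LindenstraussPelcz} to get $C \cong \left(\mathfrak{M}_0,\mathfrak{M}_1,\ldots\right)_{c_0}$, so it is essentially the paper's proof. Your extra partial-sum/Luxemburg-norm computation showing that $\Phi$ must be degenerate is sound (and a mild strengthening), but it is not needed for the stated conclusion, since the Lindenstrauss--Pelczynski isomorphism already says the decomposition is a $c_0$-decomposition.
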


However, the construction of Schauder-Orlicz decomposition in the space $C$ of continuous functions looks as a non-trivial problem, since the slicing decomposition in $C[0,1]$ is obviously not a Schauder decomposition in $C[0,1]$.
Thus, even the existence of a Schauder-Orlicz decomposition in $C[0,1]$ is unclear, although $C[0,1]$ has a $c_0$-decomposition~\cite{Singer2}.

\section{Nonexistence of Schauder-Orlicz decompositions with at least one finite dimensional subspace}

\subsection{Nonexistence of Schauder-Orlicz decompositions with at least one finite dimensional subspace in spaces $L_p$, $1\leq p\neq 2$ and $C$}
Since $L_p \ncong\ell_p$ for $p\neq 2$,  it is clear that there is no basis in $L_p$, linear spans of elements of which form a Schauder-Orlicz decomposition.
Further we prove much more strong statement that there is no Schauder-Orlicz decomposition in $L_p$, $p\neq 2$, with at least one finite dimensional subspace.
\begin{thm}\label{nonexistence}
  There is no Schauder-Orlicz decomposition in  $L_p$, $1\leq p\neq 2$, with at least one finite dimensional subspace.
\end{thm}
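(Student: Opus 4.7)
The plan is to suppose, for contradiction, that $L_p = L_p(0,1)$ admits a Schauder-Orlicz decomposition $\{\mathfrak{M}_n\}_{n=0}^{\infty}$ with a.s.c.p.\ $\{P_n\}_{n=0}^{\infty}$ for which some $\mathfrak{M}_k$ is finite dimensional, and to force $\mathfrak{M}_k=\{0\}$, contradicting the requirement that subspaces in a Schauder decomposition be non-trivial.

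First I apply Theorem~\ref{existLp} to reduce to the case $\Phi(t)=t^p$, so that
\[
\|f\|_p^p \;=\; \sum_{n=0}^{\infty}\|P_n f\|_p^p \qquad \text{for all } f\in L_p.
\]
Applying this identity to $(I-P_k)f$ and using $P_m(I-P_k)=P_m-\delta_{mk}P_k$, the $k$th summand drops out and I obtain
\[
\|f\|_p^p \;=\; \|P_k f\|_p^p + \|(I-P_k)f\|_p^p \qquad \text{for all } f\in L_p,
\]
which is exactly the statement that $P_k$ is an \emph{$L_p$-projection} on $L_p(0,1)$. The crux of the argument is then to invoke the classical structure theorem of Bernau-Lacey (see also Behrends' monograph on $M$-structure) asserting that, for $1\leq p\neq 2$, every $L_p$-projection on $L_p(\mu)$ is a band projection: there exists a measurable $E_k\subseteq(0,1)$ with $P_k f = f\,\chi_{E_k}$ for every $f\in L_p$. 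Hence $\mathfrak{M}_k = \operatorname{range} P_k = L_p(E_k)$; since Lebesgue measure on $(0,1)$ is non-atomic, $L_p(E_k)$ is finite dimensional only if $\mu(E_k)=0$, i.e.\ $\mathfrak{M}_k=\{0\}$, which is the desired contradiction.

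The main obstacle is clearly the appeal to the Bernau-Lacey theorem. A self-contained substitute would be to derive from the scalar identity $\|f+sg\|_p^p = \|f\|_p^p+|s|^p\|g\|_p^p$ — valid for all $s\in\mathbb{R}$ and all $f\in\mathfrak{M}_k$, $g\in\mathfrak{M}_j$ with $j\neq k$ — that $f$ and $g$ have essentially disjoint supports. For $p=1$ this follows from the equality case of the triangle inequality together with the sign-change $f\mapsto -f$, which forces $fg\geq 0$ and $-fg\geq 0$ a.e.; for $1<p\neq 2$ it follows by differentiating in $s$ at $s=0$ and comparing the smooth behaviour of $\int|f+sg|^p\,d\mu$ with the singular behaviour of $|s|^p$ at the origin, which extracts a pointwise orthogonality relation $|f|^{p-2}fg\equiv 0$ that can be promoted to disjoint supports. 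Once pairwise disjointness of supports is in hand, the measurable sets $E_n$ can be assembled by hand, each $\mathfrak{M}_n=L_p(E_n)$ identified, and the argument concludes exactly as above.
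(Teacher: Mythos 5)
Your proposal is correct, but it proves the theorem by a genuinely different route than the paper. You reduce (as the paper does, via Theorem~\ref{existLp}) to the case $\Phi(t)=t^p$, observe that the identity $\|f\|_p^p=\sum_n\|P_nf\|_p^p$ makes each coordinate projection an $L_p$-projection, and then invoke the classical structure theorem for $L_p$-projections on $L_p(\mu)$, $1\le p\neq 2$ (Bernau--Lacey; Behrends' $L^p$-structure notes), to conclude that each $\mathfrak{M}_k$ is a band $L_p(E_k)$ over a non-atomic measure, hence $\{0\}$ or infinite dimensional. The paper argues instead by a soft norm estimate: the Schauder-Orlicz identity forces $\|I-P_0\|\le 1$, while compactness of the finite-rank projection $P_0$ contradicts this via the Daugavet equation in $L_1$ (Babenko) and the Benyamini--Lin pseudo-Daugavet estimate for $1<p\neq 2$; this is precisely the argument that later generalizes to arbitrary spaces with the pseudo-Daugavet property (Theorem~\ref{nonexistenceX}). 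Your approach is confined to $L_p$ but buys strictly more there: it shows every Schauder-Orlicz decomposition of $L_p(0,1)$, $p\neq 2$, consists of bands, i.e.\ is a slicing decomposition in the sense of Theorem~\ref{SODLp}, with every subspace isometric to an $L_p(E_n)$. One small caveat on your ``self-contained substitute'': for $1<p\neq 2$ the first derivative at $s=0$ of $\int|f+sg|^p\,d\mu$ only yields the integral relation $\int|f|^{p-2}fg\,d\mu=0$, not a pointwise one; to force disjoint supports you need a second-order comparison (e.g.\ for $p>2$ the second derivative gives $\int|f|^{p-2}g^2\,d\mu=0$) or an asymptotic analysis isolating the $|s|^p$ contribution from $\{f=0\}$ when $1<p<2$. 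This does not affect your main argument, which rests on the cited structure theorem.
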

\begin{proof}
  By Theorem~\ref{existLp} only Schauder-Orlicz decomposition with an Orlicz function $t^p$ can exist in $L_p$.

  First we consider the case when $p\in(1,2)\cup(2,\infty)$
  and assume that there exists the Schauder-Orlicz decomposition  $\{\mathfrak{M}_n\}_{n=0}^{\infty}$ with an Orlicz function $t^p$ in $L_p$, such that
  \begin{equation}\label{dim}
    \dim \mathfrak{M}_0<\infty.
  \end{equation}
Denote by   $\{P_n\}_{n=0}^{\infty}$ the a.s.c.p. of $\{\mathfrak{M}_n\}_{n=0}^{\infty}$.
Then we note that~(\ref{dim}) means that $P_0$ is compact. Clearly $\|P_0\|\geq 1.$
Let
$$\varepsilon = \|P_0\|\geq 1.$$
Then, on the one hand, by Theorem~2 from~\cite{Benyamini} for chosen above $\varepsilon>0$ there exists $\delta(\varepsilon)>0$ such that
\begin{equation}\label{es1}
  \left\|I-P_0\right\|= \left\|\sum\limits_{n=0}^{\infty} P_n -P_0\right\|= \left\|\sum\limits_{n=1}^{\infty} P_n \right\|\geq 1+ \delta(\varepsilon)>1.
\end{equation}

On the other hand, since $\{\mathfrak{M}_n\}_{n=0}^{\infty}$ is a Schauder-Orlicz decomposition with an Orlicz function $t^p$, for any $x\in L_p$ we have
$$\sum\limits_{n=0}^{\infty} \left\| P_n x\right\|^p=\|x\|^p.$$
Hence for any $x\in L_p$ we have
$$\left\|\sum\limits_{n=1}^{\infty} P_nx \right\|^p = \sum\limits_{j=0}^{\infty} \left\|P_j \left(\sum\limits_{n=1}^{\infty} P_n x\right) \right\|^p
= \sum\limits_{n=1}^{\infty} \left\| P_n x\right\|^p \leq \sum\limits_{n=0}^{\infty} \left\| P_n x\right\|^p =\|x\|^p.$$
Thus we arrive at
\begin{equation}\label{es2}
  \left\|\sum\limits_{n=1}^{\infty} P_n \right\| \leq 1.
\end{equation}
Combining~(\ref{es1}) with (\ref{es2}) we arrive at a contradiction.

For the case $p=1$ the proof goes the same as above.
It is based on the Daugavet equation in $L_1$,  proved in~\cite{Babenko}. It reads that for any compact operator $K$ in $L_1$ one has
$$\|I+K\|=1+\|K\|.$$
\end{proof}

\begin{rmk}
Theorem~\ref{nonexistence} has an essential geometric nature, since for the case $1<p\neq 2$ it is based on Theorem~2 from~\cite{Benyamini}, which in turn is essentially based on Clarkson's inequalities for norms in $L_p$ (see Lemma~3 in~\cite{Benyamini}).
\end{rmk}

Also we have the following obvious corollary of Theorem~\ref{nonexistence} concerning FDDs in $L_p$.
\begin{corollary}\label{fddL}
  There is no Schauder-Orlicz decomposition in $L_p$, $1\leq p\neq 2$, which is an FDD.
\end{corollary}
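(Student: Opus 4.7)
The plan is to observe that this corollary is an immediate specialization of Theorem~\ref{nonexistence}. By definition, a finite dimensional decomposition (FDD) is a Schauder decomposition $\{\mathfrak{M}_n\}_{n=0}^{\infty}$ in which \emph{every} subspace $\mathfrak{M}_n$ is finite dimensional; in particular, at least one subspace is finite dimensional. So the whole argument is a one-line contrapositive: if $\{\mathfrak{M}_n\}_{n=0}^{\infty}$ were a Schauder-Orlicz decomposition in $L_p$, $1\leq p\neq 2$, which is simultaneously an FDD, then $\mathfrak{M}_0$ would be finite dimensional, placing $\{\mathfrak{M}_n\}_{n=0}^{\infty}$ inside the class of Schauder-Orlicz decompositions with at least one finite dimensional subspace, a class shown to be empty in $L_p$, $1\leq p\neq 2$, by Theorem~\ref{nonexistence}. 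This contradiction rules out the existence of such a decomposition.

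There is essentially no obstacle in the deduction itself; the real work has already been absorbed into Theorem~\ref{nonexistence}, whose proof for $1<p\neq 2$ relies on compactness of the finite-rank projection $P_0$ together with the Benyamini strict triangle inequality $\|I-P_0\|\geq 1+\delta(\|P_0\|)$ in $L_p$, and for $p=1$ on the Daugavet equation $\|I+K\|=1+\|K\|$ for compact $K$ in $L_1$. The present corollary is therefore nothing more than a convenient reformulation of the stronger statement in the FDD language, and I would simply record it in one short paragraph without repeating any of the underlying geometric machinery.
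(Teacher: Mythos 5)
Your proposal is correct and matches the paper exactly: the paper presents this corollary as an immediate consequence of Theorem~\ref{nonexistence}, since an FDD has every subspace (hence at least one) finite dimensional. Nothing further is needed.
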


Clearly each Schauder-Orlicz decomposition with the a.s.c.p. $\{P_n\}_{n=0}^{\infty}$ is normal, i.e. for all $n$ we have
$$\|P_n\|=1,$$
but the converse if false, in general. We can prove this fact as a corollary of Theorem~\ref{nonexistence}.

\begin{corollary}\label{corex}
  There exists a normal Schauder decomposition in $L_p(0,1)$,  $1\leq p\neq 2$ which is not a Schauder-Orlicz decomposition.
\end{corollary}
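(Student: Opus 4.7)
The plan is to exhibit explicitly a normal Schauder decomposition of $L_p(0,1)$ with at least one finite-dimensional subspace, and then invoke Theorem~\ref{nonexistence}, which forbids any Schauder-Orlicz decomposition in $L_p(0,1)$, $p\neq 2$, from having a finite-dimensional subspace. Once such a decomposition is produced, the corollary follows immediately.

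The candidate I propose is the one-dimensional decomposition $\mathfrak{M}_n=Lin\{h_n\}$ attached to the Haar basis $\{h_n\}_{n=0}^{\infty}$ of $L_p(0,1)$, taken with the standard $\pm 1$ convention so that a Haar function supported on a dyadic interval of length $2^{-j}$ has $\|h_n\|_p=2^{-j/p}$. It is classical that this is a monotone Schauder basis of $L_p(0,1)$ for every $1\leq p<\infty$, since the partial-sum projections coincide with conditional expectations onto nested dyadic $\sigma$-algebras and are therefore contractions on every $L_p$.

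Next I check that each coordinate projection $P_n$ has norm exactly $1$, which is the one place a small computation is needed. The biorthogonal functional of $h_n$ is the element $h_n^{\ast}=2^j h_n\in L_q(0,1)$, where $1/p+1/q=1$, because $\int h_n\cdot 2^jh_n\,dt=1$. A direct evaluation of norms gives $\|h_n^{\ast}\|_q\cdot\|h_n\|_p=2^{j}\cdot 2^{-j/q}\cdot 2^{-j/p}=2^{j(1-1/q-1/p)}=1$. Hence Hölder's inequality yields
\[
\|P_nf\|_p=|\langle f,h_n^{\ast}\rangle|\,\|h_n\|_p\leq\|h_n^{\ast}\|_q\|h_n\|_p\|f\|_p=\|f\|_p,
\]
so $\|P_n\|\leq 1$; the reverse inequality is automatic since $P_n$ is a non-zero projection on a Banach space. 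Thus $\{\mathfrak{M}_n\}_{n=0}^{\infty}$ is normal.

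Since each $\mathfrak{M}_n$ is one-dimensional, Theorem~\ref{nonexistence} applies and rules out $\{\mathfrak{M}_n\}_{n=0}^{\infty}$ being a Schauder-Orlicz decomposition in $L_p(0,1)$ whenever $1\leq p\neq 2$, which is exactly what the corollary asserts. The main conceptual obstacle is really absorbed by Theorem~\ref{nonexistence} itself; on the construction side, the only step needing care is writing down the biorthogonal functionals of the Haar system and checking the uniform bound $\|P_n\|=1$, which is routine once the scaling factors are made explicit.
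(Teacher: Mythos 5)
Your proposal is correct and takes essentially the same route as the paper: it also uses the one-dimensional spans $\mathfrak{M}_n=Lin\{h_n\}$ of the Haar basis and derives the contradiction from Theorem~\ref{nonexistence}. The only difference is cosmetic: you verify $\|P_n\|=1$ by a direct computation with the biorthogonal functionals $h_n^{\ast}=2^{j}h_n$, whereas the paper simply cites the classical fact that the (normalized) Haar basis is a normal basis of $L_p(0,1)$.
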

\begin{proof}
  To show this it enough to consider the normalized Haar basis  $\{h_n\}_{n=0}^{\infty}$ in $L_p(0,1)$,  $1\leq p\neq 2$, and corresponding to it one dimensional projections
$\{J_n\}_{n=0}^{\infty}$. It is known that $\{h_n\}_{n=0}^{\infty}$ is a normal basis of $L_p(0,1)$ for any $ p\geq 1$~\cite{Singer1}, so
$$\|J_n\|=1$$
for each $n$. Let $\mathfrak{M}_n=Lin\{h_n\}$ for any $n.$ Then clearly $\dim \mathfrak{M}_n=1$ for any $n.$
Suppose that $\{\mathfrak{M}_n\}_{n=0}^{\infty}$ is a Schauder-Orlicz decomposition in $L_p(0,1)$,  $1\leq p\neq 2$.
Then we arrive at a contradiction with Theorem~\ref{nonexistence}.
\end{proof}

However, in a Hilbert space class of normal Schauder decompositions coincides with the class of Schauder-Orlicz decompositions with an Orlicz function $t^2$.
\begin{thm}\label{normal}
  A Schauder decomposition in a Hilbert space is normal if and only if it is a Schauder-Orlicz decomposition with an Orlicz function $t^2$.
\end{thm}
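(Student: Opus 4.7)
The plan is to establish both directions by exploiting the classical fact that in a Hilbert space a (bounded linear) projection has norm one if and only if it is an orthogonal (self-adjoint) projection.

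For the easy direction $(\Leftarrow)$, I would use the observation already recorded in the introduction: by the generalized Parseval identity, a Schauder decomposition in a Hilbert space $H$ satisfies
\[
\|x\|^{2} = \sum_{n=0}^{\infty} \|P_{n}x\|^{2}, \qquad x \in H,
\]
if and only if the subspaces $\mathfrak{M}_{n}$ are mutually orthogonal. In that case each $P_{n}$ is an orthogonal projection onto $\mathfrak{M}_{n}$, hence $\|P_{n}\| = 1$ for every $n$, so the decomposition is normal.

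For the harder direction $(\Rightarrow)$, assume $\|P_{n}\| = 1$ for all $n$. The main step is to upgrade this to orthogonality of the subspaces. First, invoke the classical fact that a bounded projection $P$ on a Hilbert space with $\|P\|=1$ is automatically self-adjoint, i.e.\ the orthogonal projection onto its range. Applied coordinatewise, this shows every $P_{n}$ is an orthogonal projection onto $\mathfrak{M}_{n}$. Next, the relations $P_{i}P_{j} = \delta_{i}^{j} P_{i}$ from the definition of the a.s.c.p.\ give $P_{i}P_{j} = 0$ for $i \neq j$; combining this with $P_{i} = P_{i}^{\ast}$, for any $x,y \in H$ one has
\[
\langle P_{j}x, P_{i}y \rangle = \langle x, P_{j}^{\ast} P_{i} y\rangle = \langle x, P_{j} P_{i} y\rangle = 0,
\]
so $\mathfrak{M}_{i} \perp \mathfrak{M}_{j}$ whenever $i \neq j$. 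Consequently the partial sums $\sum_{n=0}^{N} P_{n}x$ are orthogonal expansions, and by the Pythagorean theorem together with norm continuity one obtains $\|x\|^{2} = \sum_{n=0}^{\infty} \|P_{n}x\|^{2}$. Finally, I would note that this identity is equivalent to
\[
\|x\| = \inf\!\left\{\rho > 0 : \sum_{n=0}^{\infty} \Phi\!\left(\frac{\|P_{n}x\|}{\rho}\right) \leq 1\right\}
\]
for $\Phi(t) = t^{2}$, since the Luxemburg norm associated with $t^{2}$ is precisely the $\ell_{2}$-norm; this yields that $\{\mathfrak{M}_{n}\}$ is a Schauder-Orlicz decomposition with Orlicz function $t^{2}$.

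The main obstacle, if one wishes to give a fully self-contained argument, is the norm-one-implies-orthogonal assertion for Hilbert space projections; but this is standard (one may cite it or prove it in two lines by showing $\ker P \perp \operatorname{Im} P$ via the fact that $\|x - Px\|$ is minimal among vectors of the form $x - y$ with $y \in \operatorname{Im} P$ for a norm-one projection). Once this is in hand, the rest is formal.
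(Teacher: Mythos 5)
Your proposal is correct and follows essentially the same route as the paper: both reduce the statement to the equivalence ``Schauder-Orlicz with $\Phi(t)=t^{2}$ $\Leftrightarrow$ orthogonal decomposition $\Leftrightarrow$ the a.s.c.p.\ consists of orthogonal projections,'' and then invoke the classical criterion that a projection on a Hilbert space has norm one if and only if it is orthogonal. You merely spell out the intermediate steps (Parseval, $P_iP_j=0$ plus self-adjointness, the Luxemburg norm being the $\ell_2$-norm) that the paper leaves implicit.
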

\begin{proof}
  Clearly, a Schauder decomposition $\{\mathfrak{M}_n\}_{n=0}^{\infty}$ of a Hilbert space is a Schauder-Orlicz decomposition with an Orlicz function $t^2$ if and only if $\{\mathfrak{M}_n\}_{n=0}^{\infty}$ is orthogonal.
  Further, $\{\mathfrak{M}_n\}_{n=0}^{\infty}$ is orthogonal if and only if the a.s.c.p. $\{P_n\}_{n=0}^{\infty}$ of $\{\mathfrak{M}_n\}_{n=0}^{\infty}$ consists only of orthogonal projections.
  Since the criterion for a projection $P$ in a Hilbert space to be an orthogonal projection is
  $$\|P\|=1,$$
   $\{\mathfrak{M}_n\}_{n=0}^{\infty}$ is orthogonal if and only if it is normal.
\end{proof}

Note that if $C_p(\{h_n\}_{n=0}^{\infty})$ is an unconditional constant of the Haar basis in $L_p(0,1)$, $1<p<\infty$, and $C_p(\{u_n\}_{n=0}^{\infty})$ is an unconditional constant of any other unconditional basis $\{u_n\}_{n=0}^{\infty}$ of $L_p(0,1)$, $1<p<\infty$, then
$$C_p(\{h_n\}_{n=0}^{\infty}) \leq C_p(\{u_n\}_{n=0}^{\infty}).$$
This result was first obtained by Olevskii~\cite{Olevskii} and reads that the Haar basis in $L_p(0,1)$ has minimal unconditional constant among the others unconditional bases.
Almost two decades later the precise value of unconditional constant of the Haar basis was determined by Burkholder~\cite{Burkholder}:
$$C_p(\{h_n\}_{n=0}^{\infty}) = \max\left\{p, \frac{p}{p-1} \right\} -1,\:\: 1<p<\infty,$$
see also~\cite{Johnson2}.
Next we deduce on the basis of these remarkable results the following best estimate for constants for one dimensional unconditional Schauder decompositions in $L_p(0,1)$, $1<p<\infty$.
\begin{thm}\label{best}
  Let $\{\mathfrak{M}_n\}_{n=0}^{\infty}$ be an unconditional Schauder decomposition of $L_p(0,1)$, $1<p<\infty$, with constant $M$ (see Definition~\ref{MUSD}), such that for any $n$ we have $$\dim \mathfrak{M}_n=1.$$
  Then
  $$M\geq M_p=\frac12\max\left\{p, \frac{p}{p-1} \right\},$$
  where $M_p$ is the constant of unconditional Schauder decomposition corresponding to the Haar basis.
\end{thm}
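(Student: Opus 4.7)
The plan is to translate the assertion into a statement about unconditional Schauder bases and then apply Olevskii's optimality of the Haar system together with Burkholder's sharp martingale inequality. Since each $\mathfrak{M}_n$ is one-dimensional, I select a unit vector $u_n\in\mathfrak{M}_n$ for every $n$, so that $\mathfrak{M}_n=\mathrm{Lin}\{u_n\}$ and $\{u_n\}_{n=0}^{\infty}$ is a Schauder basis of $L_p(0,1)$. Evaluating Definition~\ref{MUSD} on $y_i=a_i u_i$ with selectors $\delta_i\in\{0,1\}$ identifies the unconditionality constant $M$ of the decomposition with the partial-sum projection supremum $\sup_{A\subseteq\mathbb{N}}\|P_A\|$, where $P_A$ sends $\sum_n a_n u_n$ to $\sum_{n\in A}a_n u_n$. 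In particular, $\{u_n\}$ is an unconditional Schauder basis of $L_p(0,1)$.

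The same identification applied to the Haar basis $\{h_n\}$ reads $M_p=\sup_A\|P_A^h\|$. To evaluate this I would use the decomposition $P_A=\tfrac12\bigl(I+(2P_A-I)\bigr)$; Burkholder's sharp martingale inequality gives $\sup_A\|2P_A^h-I\|=C_p(\{h_n\})=p^{\ast}-1$ with $p^{\ast}=\max\{p,p/(p-1)\}$, and the triangle inequality together with the known sharpness of Burkholder's bound for Haar martingale transforms yields $M_p=(p^{\ast}-1+1)/2=p^{\ast}/2$. I would then invoke Olevskii's comparison $C_p(\{h_n\})\leq C_p(\{u_n\})$, recast in its partial-sum formulation (which carries over essentially verbatim from the $\pm 1$ setting to the $\{0,1\}$ setting), to obtain $\sup_A\|P_A^u\|\geq\sup_A\|P_A^h\|$, that is, $M\geq M_p=p^{\ast}/2$, as claimed.

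The main obstacle is the passage between the $\pm 1$ unconditional constant $C_p$ used in Olevskii and Burkholder and the $\{0,1\}$ partial-sum constant $M$ appearing in Definition~\ref{MUSD}. The naive conversion $\|P_A\|\leq(1+\|2P_A-I\|)/2$ and its reverse yield bounds that differ by $1$ in each direction, so obtaining the precise lower bound $M\geq p^{\ast}/2$ requires either recasting Olevskii's optimality theorem directly in the partial-sum formulation or exploiting the sharpness of Burkholder's martingale inequality for the Haar system in a way that matches $M_p$ exactly. Handling this conversion carefully, so that the constant appears as $p^{\ast}/2$ rather than a weaker value like $p^{\ast}/2-1$, is the delicate point of the argument.
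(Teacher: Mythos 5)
Your reduction to a basis $\{u_n\}$ and the identification of the optimal admissible constant with $\sup_A\|P_A\|$ are fine, but the two facts through which your plan funnels everything are exactly the ones you leave unproved, and the first of them is not true. From $P_A=\tfrac12\bigl(I+(2P_A-I)\bigr)$ and Burkholder's sharp bound $\sup_A\|2P_A^h-I\|=p^{\ast}-1$ you only get the upper estimate $\sup_A\|P_A^h\|\le p^{\ast}/2$; sharpness of Burkholder's inequality does not supply the matching lower bound, because a near-extremal $f$ for $2P_A-I$ need not stay near-extremal once the identity is added back (in general $\|f+(2P_A-I)f\|$ is far from $\|f\|+\|(2P_A-I)f\|$). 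In fact the sharp constant for martingale transforms with multipliers in $\{0,1\}$ (hence for the Haar selector projections $P_A^h$) was determined by Choi (Trans.\ Amer.\ Math.\ Soc., 1992) and is strictly smaller than $p^{\ast}/2$ for large $p$ (asymptotically $\tfrac p2+\tfrac12\log\tfrac{1+e^{-2}}{2}+o(1)$), so the identity $M_p=\sup_A\|P_A^h\|=p^{\ast}/2$ on which your argument rests cannot be repaired; note that in the theorem $M_p$ is only an admissible constant for the Haar decomposition, not its minimal one. The second ingredient, a ``partial-sum formulation'' of Olevskii's optimality, is asserted to carry over ``essentially verbatim,'' but Olevskii's theorem concerns the $\pm1$ constant, and the passage to $\{0,1\}$ selectors is precisely the lossy conversion you yourself flag; no argument is given, and even if such a selector version were available, combining it with the true value of the Haar selector constant would only yield $M\ge c_p$ with $c_p<p^{\ast}/2$, not the stated bound.

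This is also a genuinely different route from the paper's, which never evaluates $\sup_A\|P_A^h\|$ at all. There, Olevskii's theorem is applied in its original sign form to the given decomposition, giving $\bigl\|\sum_i\epsilon_iP_ix\bigr\|\le C\|x\|$ with $C\ge C_p(\{h_n\})=p^{\ast}-1$; each selector sum is then rewritten as $\sum_i\delta_iP_ix=\tfrac12\sum_i\epsilon_iP_ix+\tfrac12x$, bounded by $\tfrac{C+1}{2}\|x\|$, and the conclusion $M\ge\tfrac{C+1}{2}\ge\tfrac{p^{\ast}}{2}$ is drawn by comparing this with the defining inequality for $M$. So the $\pm1$ versus $\{0,1\}$ conversion that you correctly single out as the delicate point is indeed the heart of the matter, and your proposal leaves it open: as written it does not prove the inequality $M\ge p^{\ast}/2$.
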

\begin{proof}
  Since the Haar basis is the "best" in $L_p(0,1)$, $1<p<\infty$, in the mentioned above sense, we have for the a.s.c.p. $\{P_n\}_{n=0}^{\infty}$ of $\{\mathfrak{M}_n\}_{n=0}^{\infty}$, any
  choice of signs $\{\epsilon_i=\pm 1\}_{i=0}^{\infty}$ and any $x\in L_p$ that
  \begin{equation}\label{12}
    \left\| \sum\limits_{i=0}^{\infty} \epsilon_i P_i x\right\| \leq C\|x\|,
  \end{equation}
  where $C\geq C_p(\{h_n\}_{n=0}^{\infty}) $.
  Thus for any choice of selectors $\{\delta_i'\in\{0,1\}\}_{i=0}^{\infty}$
  there exist the set of signs $\{\epsilon_i'=\pm 1\}_{i=0}^{\infty}$  such that
  $$\delta_i'=\frac{1+\epsilon_i'}{2} $$ for all $i$.
   Hence, taking into account~(\ref{12}), for any $x\in L_p$ we have that
   \begin{align*}
      \left\| \sum\limits_{i=0}^{\infty} \delta_i' P_i x\right\| &= \left\| \frac12 \sum\limits_{i=0}^{\infty} \epsilon_i' P_i x+\frac12\sum\limits_{i=0}^{\infty}  P_i x\right\|\\
     & \leq  \frac12\left\| \sum\limits_{i=0}^{\infty} \epsilon_i' P_i x\right\|+\frac12 \|x\|\leq \frac{C+1}{2}\|x\|.
   \end{align*}
  Since $\{\mathfrak{M}_n\}_{n=0}^{\infty}$ is unconditional Schauder decomposition with constant $M$, we have on the other hand for any $x\in L_p$ that
  $$\left\| \sum\limits_{i=0}^{\infty} \delta_i' P_i x\right\|\leq M \|x\|.$$
  Subtracting from the latter inequality the previous inequality we arrive at
  $$M-\frac{C+1}{2} \geq 0.$$
  Thus
  $$M\geq \frac{C+1}{2} \geq \frac{C_p(\{h_n\}_{n=0}^{\infty})+1}{2}=\frac12\max\left\{p, \frac{p}{p-1} \right\}=M_p.$$
\end{proof}

Theorem~\ref{best} immediately yields that for any one dimensional unconditional Schauder decomposition of $L_p$ with constant $M$ we have
$$M\geq M_p=\frac12\max\left\{p, \frac{p}{p-1} \right\}>1$$
for any $1<p\neq 2$. Furthermore, it follows that the space $L_p$ has an unconditional basis with constant $1$ if and only if $p=2,$
despite the fact that the "best" Haar basis in $L_p(0,1)$, $1<p<\infty$,  is monotone and unconditional, hence Grinblyum-James orthogonal.
On the other hand, any slicing Schauder decomposition in $L_p$, $p\geq 1,$  each element of which is infinite dimensional,
is by Theorem~\ref{uncond} unconditional with constant $1.$


For Schauder-Orlicz decompositions in the space $C$ we have the following result, analogous to the Theorem~\ref{nonexistence}.
\begin{thm}\label{nonexistenceC}
  There is no Schauder-Orlicz decomposition in space $C$ with at least one finite dimensional subspace.
\end{thm}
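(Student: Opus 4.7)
The plan is to adapt the $p=1$ case of the proof of Theorem~\ref{nonexistence}, replacing the Babenko--Pichugov Daugavet equation in $L_1$ by the classical $1963$ Daugavet equation for compact operators on $C$. Neither the conclusion of Theorem~\ref{existC} nor any assumption on the Orlicz function will be needed; the argument is uniform in $\Phi$.

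Assume for contradiction that $\{\mathfrak{M}_n\}_{n=0}^{\infty}$ is a Schauder-Orlicz decomposition in $C$ with an Orlicz function $\Phi$ and the a.s.c.p. $\{P_n\}_{n=0}^{\infty}$, and that $\dim \mathfrak{M}_0<\infty$. Then $P_0$ is of finite rank, hence compact, and $\|P_0\|\geq 1$ since $P_0$ is a nonzero projection. Applying the Daugavet equation $\|I+K\|=1+\|K\|$ on $C$ with $K=-P_0$ yields the lower bound
$$\|I-P_0\|=1+\|P_0\|\geq 2.$$
On the other hand, because $I-P_0=\sum_{n\geq 1}P_n$ acts on $x\in C$ as an element whose coordinate norms are $0,\|P_1 x\|,\|P_2 x\|,\ldots$, the Schauder-Orlicz identity gives
$$\|(I-P_0)x\|=\inf\left\{\rho>0:\sum_{n=1}^{\infty}\Phi\!\left(\frac{\|P_n x\|}{\rho}\right)\leq 1\right\}\leq\inf\left\{\rho>0:\sum_{n=0}^{\infty}\Phi\!\left(\frac{\|P_n x\|}{\rho}\right)\leq 1\right\}=\|x\|,$$
the inequality holding because adding the non-negative term $\Phi(\|P_0 x\|/\rho)$ to the sum can only shrink the set of admissible $\rho$. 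Taking the supremum over unit vectors yields $\|I-P_0\|\leq 1$, in direct contradiction with the previous bound.

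The only mild obstacle is pointing to the Daugavet equation in the appropriate form: for $C[a,b]$ this is Daugavet's original result, and for $C(K)$ with $K$ compact Hausdorff without isolated points it is a standard consequence of the Daugavet property of such spaces. Once cited, the remainder is essentially a transcription of the $p=1$ case of Theorem~\ref{nonexistence}, the monotonicity of Schauder-Orlicz decompositions (Theorem~\ref{orthogonal}(1)) already supplying the upper bound $\|I-P_0\|\leq 1$ without any reference to the structure of $C$.
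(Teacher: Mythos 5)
Your proof is correct: the contradiction between the Daugavet equation in $C$ (giving $\|I-P_0\|=1+\|P_0\|\geq 2$ for the compact nonzero projection $P_0$) and the contractivity of the tail projection $I-P_0=\sum_{n\geq 1}P_n$ is exactly the mechanism the paper uses. Where you differ is in how the bound $\|I-P_0\|\leq 1$ is obtained. The paper's proof of this theorem first invokes Theorem~\ref{uncond} (unconditionality with constant $1$) together with Corollary~8 of Lindenstrauss--Pelczy\'nski to conclude that any Schauder-Orlicz decomposition of $C$ is a $c_0$-decomposition, writes $\max_n\|P_nx\|=\|x\|$, and only then runs the Daugavet argument of Theorem~\ref{nonexistence}; you bypass that reduction entirely and derive $\|(I-P_0)x\|\leq\|x\|$ straight from the Orlicz infimum formula, uniformly in $\Phi$ (using $P_nP_0=\delta_n^0P_0$ and $\Phi(0)=0$ so that dropping the $n=0$ term only enlarges the admissible set of $\rho$). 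This is a genuinely leaner route for this particular theorem, and it is in fact identical to the argument the paper deploys later in part (i) of Theorem~\ref{nonexistenceX}, of which Theorem~\ref{nonexistenceC} is the special case obtained because $C$ has the Daugavet property; what the paper's detour through $c_0$-decompositions buys is only the extra structural information $\max_n\|P_nx\|=\|x\|$, which the contradiction itself does not need. One small quibble: your closing remark attributes the bound $\|I-P_0\|\leq 1$ to monotonicity, i.e.\ to statement~1 of Theorem~\ref{orthogonal}, but monotonicity only controls the initial-segment projections $\sum_{j=0}^{n}P_j$; it is the Singer-orthogonality of statement~2 (applied to the disjoint index sets $\{1,\dots,N\}$ and $\{0\}$ and then letting $N\to\infty$), or your own direct computation, that yields the tail bound. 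Since your main derivation is the direct computation, this does not affect the validity of the proof.
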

\begin{proof}
Since any Schauder-Orlicz decomposition is unconditional by Theorem~\ref{uncond} and any
unconditional Schauder decomposition in $C$ is $c_0$-decomposition by~\cite{LindenstraussPelcz}, we have for
Schauder-Orlicz decomposition $\{\mathfrak{M}_n\}_{n=0}^{\infty}$ with the a.s.c.p.  $\{P_n\}_{n=0}^{\infty}$
and any $x\in C$
that
$$\max_n \|P_nx\| =\|x\|.$$
  The rest of the proof goes by lines of the proof of Theorem~\ref{nonexistence} and is based on the validity of the Daugavet equation in the space $C$, see~\cite{Daugavet}.
\end{proof}

Finally we have the following corollary of Theorem~\ref{nonexistenceC} concerning FDDs in space $C$.
\begin{corollary}\label{fddc}
  There is no Schauder-Orlicz decomposition in space $C$, which is an FDD.
\end{corollary}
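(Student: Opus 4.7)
The plan is to deduce this immediately from Theorem~\ref{nonexistenceC}, using only the definition of an FDD. Recall that a Schauder decomposition $\{\mathfrak{M}_n\}_{n=0}^{\infty}$ is called an FDD when every subspace $\mathfrak{M}_n$ is finite dimensional. In particular, if an FDD exists, then $\dim \mathfrak{M}_0 < \infty$, so such a decomposition automatically satisfies the hypothesis ``at least one finite dimensional subspace'' appearing in Theorem~\ref{nonexistenceC}.

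Hence the argument is a direct contrapositive. Suppose for contradiction that $\{\mathfrak{M}_n\}_{n=0}^{\infty}$ is a Schauder-Orlicz decomposition of $C$ that is an FDD. Then every $\mathfrak{M}_n$ is finite dimensional, so in particular at least one $\mathfrak{M}_n$ is finite dimensional. This produces a Schauder-Orlicz decomposition in $C$ with at least one finite dimensional subspace, which directly contradicts Theorem~\ref{nonexistenceC}. The contradiction shows that no FDD of $C$ can be a Schauder-Orlicz decomposition.

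There is no real obstacle here: the corollary is a strict weakening of Theorem~\ref{nonexistenceC}, since ``every subspace finite dimensional'' implies ``at least one subspace finite dimensional.'' All the geometric content (unconditionality of Schauder-Orlicz decompositions, the theorem of Lindenstrauss-Pelczy\'nski forcing $c_0$-decomposition structure, and the Daugavet equation in $C$) has already been absorbed into the proof of Theorem~\ref{nonexistenceC}, so no further estimates are needed.
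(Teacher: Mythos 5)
Your proposal is correct and is exactly the deduction the paper intends: an FDD has every subspace finite dimensional, hence in particular at least one, so Theorem~\ref{nonexistenceC} applies directly. Nothing further is needed.
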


\subsection{Nonexistence of Schauder-Orlicz decompositions with at least one finite dimensional subspace in Banach spaces with pseudo-Daugavet property}

A Banach space $X$ has a pseudo-Daugavet property~\cite{Popov},~\cite{PopovB} if there exists a strictly increasing function $\varphi_{X}:$ $(0,\infty)\mapsto (0,\infty)$ such that for any nonzero
compact operator $K$ on $X$ we have
$$\|I+K\|\geq 1+\varphi_{X}\left(\|K\|\right).$$
In particular, if $X$ has a pseudo-Daugavet property with $\varphi_{X}(t)=t$, then it has the Daugavet property.
Historically first appeared examples of spaces with Daugavet property are classical spaces $C$ and $L_1$, see~\cite{Daugavet} and~\cite{Babenko}.
It is worth mentioning that Kadets first proved that spaces with Daugavet property have no unconditional basis~\cite{Kadets1}.
For more about the Daugavet property, its relation to the geometry of a Banach space and its variations we refer, e.g. to monographs~\cite{Kadets} and~\cite{PopovB}.
Note that a pseudo-Daugavet property is related to the problem of best compact approximation in $X$~\cite{Axler},~\cite{Benyamini}.
In~\cite{Benyamini} Benyamini and Lin proved that spaces $L_p$, $1< p\neq 2$ enjoy a pseudo-Daugavet property.
The validity of pseudo-Daugavet property was also proved for Lorentz function spaces $L_{w, p} \left([0,1], \mu \right)$, $p>2$, without any restrictions on function $w$~\cite{Popov},~\cite{PopovB}, for non-atomic non-commutative $L_p$-spaces, where $1< p\neq 2$~\cite{Oikberg}, and for certain spaces of operators on $\ell_p$~\cite{Oikberg2}.

We generalize our Theorems~\ref{nonexistence} and~\ref{nonexistenceC} to the case of Banach spaces with pseudo-Daugavet property as follows.
\begin{thm}\label{nonexistenceX}
The following statements are true.

(i) Let $X$ be a Banach space satisfying pseudo-Daugavet property.
Then there is no Schauder-Orlicz decomposition in space $X$ with at least one finite dimensional subspace.

(ii) Conversely, let $X$ be a Banach space with Schauder-Orlicz decomposition, at least one subspace of which is finite dimensional.
Then $X$ does not have pseudo-Daugavet property.
\end{thm}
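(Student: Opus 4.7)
The plan is first to observe that statements (i) and (ii) are contrapositives of each other and are therefore logically equivalent; a single argument establishes both. I would prove (ii) directly, since it has the more constructive form and fits the template already used in the proofs of Theorems~\ref{nonexistence} and~\ref{nonexistenceC}.

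Let $\{\mathfrak{M}_n\}_{n=0}^{\infty}$ be a Schauder-Orlicz decomposition in $X$ with Orlicz function $\Phi$ and a.s.c.p. $\{P_n\}_{n=0}^{\infty}$, and assume without loss of generality that $\dim \mathfrak{M}_0 < \infty$, so that $P_0$ is a nonzero finite-rank (hence compact) operator with $\|P_0\| \geq 1$. The crucial step is to establish
$$\|I - P_0\| \leq 1.$$
For arbitrary $x \in X$, set $y = (I-P_0)x$; then $P_0 y = 0$ and $P_n y = P_n x$ for all $n \geq 1$. For any $\rho > \|x\|$ the Schauder-Orlicz identity~(\ref{Schauder-Orlicz}) applied to $x$ yields $\sum_{n=0}^{\infty}\Phi(\|P_n x\|/\rho) \leq 1$, hence a fortiori $\sum_{n=1}^{\infty}\Phi(\|P_n y\|/\rho) \leq 1$. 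Applying~(\ref{Schauder-Orlicz}) now to $y$ gives $\|y\| \leq \rho$, and letting $\rho \searrow \|x\|$ produces $\|(I-P_0)x\| \leq \|x\|$, as claimed.

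Finally, assume for contradiction that $X$ enjoys the pseudo-Daugavet property with strictly increasing positive function $\varphi_{X}$. Applied to the nonzero compact operator $K = -P_0$, it yields
$$\|I - P_0\| \geq 1 + \varphi_{X}(\|P_0\|) > 1,$$
which is incompatible with the bound just derived. Hence $X$ cannot have the pseudo-Daugavet property, establishing (ii); by contraposition, (i) follows simultaneously. I do not anticipate any essential obstacle beyond correctly reading off the Orlicz-norm bound: both halves of the argument reduce to direct applications of the two governing definitions, and the structural analogy with the $L_p$ and $C$ cases (where the Benyamini--Lin estimate and the Daugavet equation played the role of $\varphi_X$) confirms that the compact-perturbation inequality always dominates the Orlicz bound.
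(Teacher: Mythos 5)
Your proposal is correct and follows essentially the same route as the paper: the key inequality $\|I-P_0\|\leq 1$ is extracted from the Schauder--Orlicz identity (the paper manipulates the infima directly, you pass via $\rho>\|x\|$ and let $\rho\searrow\|x\|$, which amounts to the same thing), and this is played off against the pseudo-Daugavet lower bound $\|I-P_0\|\geq 1+\varphi_X(\|P_0\|)>1$ for the nonzero compact projection $P_0$. Your observation that (i) and (ii) are contrapositives merely streamlines the presentation; the paper writes out both parts with the identical contradiction.
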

\begin{proof}
To prove (i) suppose that there exists the Schauder-Orlicz decomposition  $\{\mathfrak{M}_n\}_{n=0}^{\infty}$ with an Orlicz function $\Phi$ in $X$, such that
  \begin{equation}\label{dimX}
    \dim \mathfrak{M}_0<\infty.
  \end{equation}
Denote by   $\{P_n\}_{n=0}^{\infty}$ the a.s.c.p. of $\{\mathfrak{M}_n\}_{n=0}^{\infty}$.
Note fisrt that~(\ref{dim}) means that $P_0$ is compact.
Then, on the one hand, since $X$ has pseudo-Daugavet property we have that there exists a strictly increasing function $\varphi_{X}:$ $(0,\infty)\mapsto (0,\infty)$ such that
\begin{equation}\label{esX}
  \left\|I-P_0\right\|= \left\|\sum\limits_{n=0}^{\infty} P_n -P_0\right\|= \left\|\sum\limits_{n=1}^{\infty} P_n \right\|\geq 1+ \varphi_{X}\left(\|P_0\|\right)>1.
\end{equation}

On the other hand, since $\{\mathfrak{M}_n\}_{n=0}^{\infty}$ is a Schauder-Orlicz decomposition with an Orlicz function $\Phi$ and the a.s.c.p. $\{P_n\}_{n=0}^{\infty}$, for any $x\in X$ we have
$$\inf\left\{\rho >0: \sum\limits_{n=0}^{\infty} \Phi\left(\frac{\|P_n x\|}{\rho}\right)\leq 1\right\} = \|x\|.$$
Hence for any $x\in X$ we have
\begin{align*}
  \left\|\sum\limits_{n=1}^{\infty} P_n x \right\| & = \inf\left\{\rho >0: \sum\limits_{j=0}^{\infty} \Phi\left(\frac{\left\|P_j \left(\sum\limits_{n=1}^{\infty} P_n x \right)\right\|}{\rho}\right)\leq 1\right\}\\
   & = \inf\left\{\rho >0: \sum\limits_{n=1}^{\infty} \Phi\left(\frac{\|P_n x\|}{\rho}\right)\leq 1\right\}\\
   & \leq \inf\left\{\rho >0: \sum\limits_{n=0}^{\infty} \Phi\left(\frac{\|P_n x\|}{\rho}\right)\leq 1\right\} =\|x\|.
\end{align*}
It means that
\begin{equation}\label{esX2}
  \left\|\sum\limits_{n=1}^{\infty} P_n \right\| \leq 1.
\end{equation}
Combining~(\ref{esX}) with (\ref{esX2}) we observe a contradiction.

To prove (ii) we consider Schauder-Orlicz decomposition  $\{\mathfrak{M}_n\}_{n=0}^{\infty}$ with an Orlicz function $\Phi$ in $X$, such that condition~(\ref{dimX}) holds.
By   $\{P_n\}_{n=0}^{\infty}$ we denote the a.s.c.p. of $\{\mathfrak{M}_n\}_{n=0}^{\infty}$.
Assume that $X$ has pseudo-Daugavet property. Hence
there exists a strictly increasing function $\varphi_{X}:$ $(0,\infty)\mapsto (0,\infty)$ such that
\begin{equation}\label{e1}
  \left\|I-P_0\right\|\geq 1+ \varphi_{X}\left(\|P_0\|\right)>1.
\end{equation}

On the other hand, since $\{\mathfrak{M}_n\}_{n=0}^{\infty}$ is a Schauder-Orlicz decomposition with an Orlicz function $\Phi$ and the a.s.c.p. $\{P_n\}_{n=0}^{\infty}$,
we have, as in the proof of (i), that condition~(\ref{esX2}) holds.
Thus we arrive at a contradiction between~(\ref{e1}) and~(\ref{esX2}).
\end{proof}

Combining Theorem~\ref{nonexistenceX} with~\cite{Popov},~\cite{Oikberg} and~\cite{Oikberg2}   we have the following immediate corollary.
\begin{corollary}\label{pDpcor}
Lorentz function spaces $L_{w, p} \left([0,1], \mu \right)$, $p>2$, non-atomic non-commutative $L_p$-spaces, where $1< p\neq 2$ and certain spaces of operators on $\ell_p$ from~\cite{Oikberg2}
have no Schauder-Orlicz decomposition, at least one subspace of which is finite dimensional.
\end{corollary}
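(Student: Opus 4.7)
The plan is to obtain Corollary~\ref{pDpcor} as an immediate consequence of part (i) of Theorem~\ref{nonexistenceX}, using as input the pseudo-Daugavet property of the three families of spaces listed. No new geometric work is needed: the corollary is essentially a bookkeeping application of a general implication to three concrete classes for which the hypothesis has been verified elsewhere.

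First I would recall, for each of the three families of spaces in the statement, the exact references in the literature establishing pseudo-Daugavet property. For the Lorentz function spaces $L_{w,p}([0,1],\mu)$ with $p>2$ (and no additional restrictions on the weight $w$), pseudo-Daugavet property was obtained by Popov in~\cite{Popov} and is further discussed in~\cite{PopovB}. For non-atomic non-commutative $L_p$-spaces with $1<p\neq 2$, the pseudo-Daugavet property was proved by Oikhberg in~\cite{Oikberg}. Finally, for the spaces of operators on $\ell_p$ singled out in~\cite{Oikberg2}, pseudo-Daugavet property is precisely the main content of those results. These references are exactly the ones cited in the statement of the corollary, so invoking them is direct.

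Second, I would apply Theorem~\ref{nonexistenceX}(i) to each space $X$ in these three families. That theorem asserts that any Banach space enjoying pseudo-Daugavet property fails to carry a Schauder-Orlicz decomposition with at least one finite dimensional subspace. Since each space from the three families is such a Banach space $X$, the conclusion of Theorem~\ref{nonexistenceX}(i) applies verbatim and yields the statement of Corollary~\ref{pDpcor}.

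There is no real obstacle to overcome here: the corollary is genuinely a one-line deduction, and the only care required is in correctly attributing pseudo-Daugavet property to the spaces listed (in particular, making sure the parameter ranges $p>2$ for Lorentz spaces and $1<p\neq 2$ for non-commutative $L_p$ match the hypotheses of the source papers). Thus the proof proposal reduces to the single sentence: the stated classes of spaces have pseudo-Daugavet property by~\cite{Popov},~\cite{Oikberg},~\cite{Oikberg2}, and therefore Theorem~\ref{nonexistenceX}(i) rules out Schauder-Orlicz decompositions in them with at least one finite dimensional subspace.
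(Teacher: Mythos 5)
Your proposal is correct and follows exactly the paper's own route: the corollary is obtained by combining Theorem~\ref{nonexistenceX}(i) with the pseudo-Daugavet property of the listed spaces as established in~\cite{Popov},~\cite{Oikberg} and~\cite{Oikberg2}. Nothing further is needed.
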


We have the following corollary, extending Corollaries~\ref{fddL} and~\ref{fddc}.
\begin{corollary}\label{fddX}
Let $X$ be a Banach space with pseudo-Daugavet property.
  Then there is no Schauder-Orlicz decomposition in $X$, which is an FDD.
  Conversely, if there exists a Schauder-Orlicz decomposition in $X$, which is an FDD, then $X$ has no pseudo-Daugavet property.
\end{corollary}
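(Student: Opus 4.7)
The plan is to deduce Corollary~\ref{fddX} as an immediate consequence of Theorem~\ref{nonexistenceX}, since the hypothesis of being an FDD is strictly stronger than the hypothesis ``possesses at least one finite dimensional subspace'' used there. First I would observe that if $\{\mathfrak{M}_n\}_{n=0}^{\infty}$ is any FDD in $X$, then by definition $\dim \mathfrak{M}_n < \infty$ for every $n\geq 0$; in particular $\dim \mathfrak{M}_0 < \infty$, so the hypothesis of Theorem~\ref{nonexistenceX} is satisfied with (for instance) the zeroth subspace playing the role of the distinguished finite dimensional subspace.

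For the forward direction, I would argue by contradiction: assume $X$ has pseudo-Daugavet property and that there exists a Schauder-Orlicz decomposition $\{\mathfrak{M}_n\}_{n=0}^{\infty}$ in $X$ which is an FDD. By the observation above this decomposition has at least one finite dimensional subspace, which directly contradicts part (i) of Theorem~\ref{nonexistenceX}. Hence no such FDD Schauder-Orlicz decomposition can exist.

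For the converse direction, I would argue contrapositively in the same manner: if $\{\mathfrak{M}_n\}_{n=0}^{\infty}$ is a Schauder-Orlicz decomposition in $X$ which is an FDD, then it is in particular a Schauder-Orlicz decomposition in $X$ at least one of whose subspaces is finite dimensional, and part (ii) of Theorem~\ref{nonexistenceX} then yields that $X$ does not possess pseudo-Daugavet property.

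There is no genuine obstacle: the substantive analytic content (the interaction of the Schauder-Orlicz identity with the pseudo-Daugavet inequality through $\|I-P_0\|$ and $\|\sum_{n\geq 1} P_n\|$) has already been carried out in the proof of Theorem~\ref{nonexistenceX}, and Corollary~\ref{fddX} is merely the specialization of that theorem to the stronger geometric hypothesis that every subspace in the decomposition is finite dimensional. The only thing to verify is the trivial logical implication ``FDD $\Rightarrow$ at least one finite dimensional subspace,'' which is immediate from the definition of an FDD.
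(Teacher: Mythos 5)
Your proposal is correct and coincides with the paper's intended argument: the corollary is stated there without proof precisely because it is the immediate specialization of Theorem~\ref{nonexistenceX} obtained by noting that an FDD has every (hence at least one) subspace finite dimensional. Both directions follow from parts (i) and (ii) exactly as you describe.
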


Finally, in the class of spaces with Schauder-Orlicz decompositions combination of Theorem~\ref{nonexistenceX} with~Corollary~\ref{fddX} turns into the following criterion.
\begin{thm}\label{crit}
  Let $X$ be a Banach space, which has a Schauder-Orlicz decomposition. Then the following assertions are equivalent.
  \begin{enumerate}
    \item A Banach space $X$ has pseudo-Daugavet property.
    \item There is no Schauder-Orlicz decomposition in $X$ with at least one finite dimensional subspace.
    \item There is no Schauder-Orlicz decomposition in $X$, which is an FDD.
  \end{enumerate}
\end{thm}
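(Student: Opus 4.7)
The plan is to derive the three-way equivalence by chaining two biconditionals, namely $(1)\Leftrightarrow(2)$ and $(1)\Leftrightarrow(3)$, each of which is essentially already contained in one of the preceding results. Under the standing assumption that $X$ admits some Schauder--Orlicz decomposition, no further geometric construction is required; the argument is a matter of organizing the already available implications into the desired package.

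For the equivalence $(1)\Leftrightarrow(2)$ I would invoke Theorem~\ref{nonexistenceX}: its part~(i) gives the implication $(1)\Rightarrow(2)$, stating that pseudo-Daugavet property in $X$ rules out any Schauder--Orlicz decomposition with a finite-dimensional component, while its part~(ii), explicitly labelled as ``conversely'' there, supplies the reverse implication $(2)\Rightarrow(1)$. Analogously, $(1)\Leftrightarrow(3)$ is read directly off Corollary~\ref{fddX}: the first assertion provides $(1)\Rightarrow(3)$ and the ``conversely'' assertion provides $(3)\Rightarrow(1)$. Each time, the standing hypothesis that $X$ possesses an SOD ensures that the statements (2) and (3) are non-vacuous formulations and that the a.s.c.p.\ of such an SOD is available to invoke the pseudo-Daugavet estimate and the Schauder--Orlicz modular identity used inside Theorem~\ref{nonexistenceX}.

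Combining the two biconditionals by transitivity immediately yields $(2)\Leftrightarrow(3)$, and hence the full three-way equivalence $(1)\Leftrightarrow(2)\Leftrightarrow(3)$. I do not anticipate a substantive obstacle here: all of the analytic content -- linking pseudo-Daugavet property to the non-existence of an SOD with a finite-dimensional subspace, and to the non-existence of an FDD SOD -- has already been carried out in Theorem~\ref{nonexistenceX} and Corollary~\ref{fddX}. The role of the present theorem is solely to consolidate these two biconditional results into a single criterion phrased in terms of the admissible dimensions of the subspaces appearing in any Schauder--Orlicz decomposition of $X$.
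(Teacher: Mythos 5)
Your route is the same one the paper takes (Theorem~\ref{crit} is presented there as nothing more than a combination of Theorem~\ref{nonexistenceX} with Corollary~\ref{fddX}), but the step where you claim the backward implications is a genuine logical gap. Part~(ii) of Theorem~\ref{nonexistenceX} is the \emph{contrapositive} of part~(i), not its converse: it says that the existence of a Schauder--Orlicz decomposition with a finite dimensional subspace excludes the pseudo-Daugavet property, i.e. $\neg(2)\Rightarrow\neg(1)$, which is logically identical to $(1)\Rightarrow(2)$. It does not say that the absence of such a decomposition forces the pseudo-Daugavet property, which is what $(2)\Rightarrow(1)$ asserts. The same remark applies to the ``conversely'' clause of Corollary~\ref{fddX}: it is the contrapositive of $(1)\Rightarrow(3)$ and does not yield $(3)\Rightarrow(1)$. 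Hence from the results you cite one obtains only the chain $(1)\Rightarrow(2)\Rightarrow(3)$ (the middle implication being the trivial remark that an FDD Schauder--Orlicz decomposition has at least one finite dimensional subspace); nothing in Theorem~\ref{nonexistenceX} or Corollary~\ref{fddX} gives any \emph{sufficient} condition for the pseudo-Daugavet property, which is exactly what closing the loop $(3)\Rightarrow(1)$, or $(2)\Rightarrow(1)$, or even $(3)\Rightarrow(2)$, would require.

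To be fair, the paper itself offers no more than the one-sentence justification you reproduce, so your proposal faithfully mirrors its argument; but as a proof of the stated three-way equivalence it is incomplete. You would need an independent argument showing that, for a Banach space possessing some Schauder--Orlicz decomposition, the non-existence of one with a finite dimensional subspace (respectively, of one which is an FDD) implies the estimate $\|I+K\|\geq 1+\varphi_X(\|K\|)$ for every nonzero compact operator $K$, and no such argument is contained in the results you invoke; your remark that ``all of the analytic content has already been carried out'' is precisely where the missing content lies.
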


\section{Concluding remarks and perspectives}

Since the concept of Schauder-Orlicz decomposition in Banach spaces seems to be new there are many interesting questions for further study.
Among them is also a question what the existence of Schauder-Orlicz decomposition in a  Banach space $X$ mean to geometric properties of $X$?
Theorem~\ref{crit} gives only partial answer and characterizes pseudo-Daugavet property, which has of course geometric meaning, for Banach spaces possessing Schauder-Orlicz decompositions on a language of sizes of dimensions of these decompositions.

We note some of questions for further study, which lie on a surface.

1) Does non-separable Orlicz spaces $\ell_{\Phi}$, i.e. such that $\Phi$ does not satisfy the $\Delta_2$-condition at zero~\cite{L-T}, have Schauder decomposition?
By~\cite{Chadwick} we have that all reflexive non-separable $\ell_{\Phi}$ have a Schauder decomposition, but what with non-reflexive non-separable $\ell_{\Phi}$?
Moreover, does non-separable Orlicz spaces $\ell_{\Phi}$ have $\ell_{\Phi}$-decomposition and Schauder-Orlicz decomposition?

2) It is proved in~\cite{LindenstraussPelcz} that all unconditional Schauder decompositions in $\ell_1$ are $\ell_1$-decompositions
and all unconditional Schauder decompositions in $c_0$ are
$c_0$-decompositions. Thus the class of unconditional Schauder decompositions in $\ell_1$ (resp. $c_0$) coincides with the class of all $\ell_1$-decompositions (resp. $c_0$-decompositions).
However, an interesting question on the uniqueness, up to an isomorphism, of $\ell_1$-decomposition with preassigned dimensions in $\ell_1$ and
 of $c_0$-decomposition with preassigned dimensions in $c_0$ is still open.
On the way of finding the answer we propose the following.
\begin{conjecture}\label{conj}
Every Schauder-Orlicz decomposition with preassigned dimensions in $\ell_1$ is unique, up to an isometry.
Every Schauder-Orlicz decomposition with preassigned dimensions in $c_0$ is unique, up to an isometry.
\end{conjecture}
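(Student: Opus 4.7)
\emph{Plan.} The strategy is to show that any Schauder-Orlicz decomposition of $\ell_1$ (respectively $c_0$) is, up to an isometric automorphism of the whole space, the block decomposition obtained by partitioning the canonical basis $\{e_j\}$ into subsets of the preassigned sizes $d_n$. The main ingredients are Theorem~\ref{uncond}, Theorem~\ref{seqofspaceSOD}, Corollary~$8$ of~\cite{LindenstraussPelcz} (recalled at the end of Section~$2.1$), and extreme-point analysis. By Theorem~\ref{uncond} every Schauder-Orlicz decomposition $\{\mathfrak{M}_n\}_{n=0}^{\infty}$ is unconditional, and by the cited Corollary~$8$ every unconditional Schauder decomposition of $\ell_1$ (resp.\ $c_0$) is an $\ell_1$-decomposition (resp.\ $c_0$-decomposition). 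Combining with Theorem~\ref{seqofspaceSOD}, the map $Sx=(P_0x,P_1x,\ldots)$ is then an isometric isomorphism from $\ell_1$ onto $(\mathfrak{M}_0,\mathfrak{M}_1,\ldots)_{\ell_\Phi}$, and analogously for $c_0$ with the degenerate Orlicz function whose Luxemburg norm equals the $\sup$-norm.

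\emph{Pinning down the Orlicz function.} Since $(\mathfrak{M}_n)_{\ell_\Phi}$ is isometric to $\ell_1$, which is not strictly convex, $\Phi$ cannot be strictly convex on the interval relevant to the Luxemburg norm on the unit ball. Combined with $\Phi(0)=0$ and the natural normalization $\Phi(1)=1$, a direct computation of the Luxemburg norm on the sequence $(1,\ldots,1,0,\ldots)$ with $N$ ones singles out the slope and forces $\Phi(1/N)=1/N$ for every $N$, and hence $\Phi(t)=t$ on $[0,1]$ by convexity. The $c_0$ variant is analogous and picks out the degenerate Orlicz step function.

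\emph{Identifying blocks and building the isometry.} The extreme points of the unit ball of $\ell_1$ are precisely $\{\pm e_j:j\geq 0\}$, while the extreme points of the unit ball of an $\ell_1$-sum $(\mathfrak{M}_n)_{\ell_1}$ are exactly the single-block unit extreme vectors $(0,\ldots,0,x_k,0,\ldots)$. Since the isometry $S$ sends extreme points to extreme points, each $e_j$ must lie in a unique block $\mathfrak{M}_{k(j)}$. Setting $I_k=\{j:e_j\in\mathfrak{M}_k\}$, totality of $\{e_j\}$ together with the direct-sum property of the Schauder decomposition yields $\mathfrak{M}_k=\overline{\mathrm{span}}\{e_j:j\in I_k\}$, isometric to $\ell_1^{d_k}$, with $|I_k|=d_k$. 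Given a second Schauder-Orlicz decomposition $\{\mathfrak{N}_n\}$ with $\dim\mathfrak{N}_n=d_n$, the same analysis produces a partition $\mathbb{N}=\bigsqcup J_k$ with $|J_k|=d_k$; choosing bijections $\sigma_k\colon I_k\to J_k$ and assembling them into a permutation $\sigma$ of $\mathbb{N}$, the linear extension of $Te_j:=e_{\sigma(j)}$ is a surjective isometry of $\ell_1$ with $T\mathfrak{M}_k=\mathfrak{N}_k$ for every $k$. The $c_0$ case is analogous, with the extreme-point step replaced by a weak$^*$-extreme-point analysis in $\ell_\infty=c_0^{**}$ (or by working with the pre-dual $\ell_1$-decomposition coming from the biorthogonal functionals of the Schauder decomposition).

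\emph{Main obstacle.} The hardest point is the $c_0$ part of the block-identification step, since the unit ball of $c_0$ has no extreme points at all and the rigidity must be transported via duality; verifying that this duality interchange interacts correctly with the Schauder-Orlicz identity and with the degeneracy of $\Phi$ requires careful control. A secondary difficulty is the logical interdependence of the Orlicz-function and block-identification steps: the cleanest identification $\Phi(t)=t$ uses test vectors supported on several distinct blocks, whose existence becomes obvious only after the block structure is known. One must therefore bootstrap, first exploiting the isomorphism (not yet isometry) provided by the Lindenstrauss-Pelczy\'nski decomposition to access such test vectors, and then upgrading to an isometry via the Schauder-Orlicz identity of Theorem~\ref{seqofspaceSOD}.
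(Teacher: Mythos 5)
You should first note that the statement you are proving is Conjecture~\ref{conj}: the paper poses it as an open problem in Section~5 and contains no proof, so your proposal has to stand entirely on its own. Your end-game for $\ell_1$ is sound \emph{once exact block-additivity of the norm is available}: if $\|x\|=\sum_n\|P_nx\|$ for all $x$, then extremality of $e_j$ in $B_{\ell_1}$ does force $e_j$ into a single block (if $u=P_ke_j\neq0$ and $v=e_j-u\neq0$, then $e_j=\|u\|\tfrac{u}{\|u\|}+\|v\|\tfrac{v}{\|v\|}$ with $\|u\|+\|v\|=1$ is a nontrivial convex combination), whence $\mathfrak{M}_k=\overline{\mathrm{span}}\{e_j:j\in I_k\}$ and the permutation isometry finishes the $\ell_1$ half.

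The genuine gap is the step that is supposed to deliver this additivity, namely that $\Phi(t)=t$ on $[0,1]$. Your computation of the Luxemburg norm of $(1,\ldots,1,0,\ldots)$ presupposes unit vectors $u_1,\ldots,u_N$ in distinct blocks with $\|u_1+\cdots+u_N\|_{\ell_1}=N$; but the Schauder--Orlicz identity itself only gives $\|u_1+\cdots+u_N\|=1/\Phi^{-1}(1/N)\le N$, with equality precisely when $\Phi(1/N)=1/N$ --- so the computation assumes the conclusion. You acknowledge this circularity, but the proposed bootstrap cannot repair it: Corollary~8 of~\cite{LindenstraussPelcz} yields only an isomorphism, i.e.\ two-sided estimates with constants, and no exact norm identity can be extracted from an equivalence. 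Similarly, ``$\ell_1$ is not strictly convex, hence $\Phi$ is not strictly convex'' does not pin $\Phi$ down: what has to be excluded is an isometric $1$-symmetric copy of $h_\Phi$ with $\Phi\neq t$ sitting diagonally across the blocks inside $\ell_1$ (zonoid-type obstructions dispose of, say, $\Phi(t)=t^2$, but not of piecewise-linear $\Phi$ without substantially more work), and this exclusion is exactly the open content of the conjecture. Finally, the $c_0$ half is only a statement of intent: the ball of $c_0$ has no extreme points, and neither the weak$^*$-extreme-point route in $c_0^{**}=\ell_\infty$ nor the dualization to a decomposition of $\ell_1=c_0^*$ with the complementary Orlicz norm is actually carried out; it also inherits the same circularity in identifying the degenerate $\Phi$. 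So the architecture (Theorem~\ref{uncond} plus Theorem~\ref{seqofspaceSOD} plus rigidity of the canonical basis) is the natural one, but the central rigidity step is missing, and the statement remains unproved.
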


3) Note that in~\cite{Singer2} Schauder decomposition satisfying the condition $2$ of Theorem~\ref{orthogonal} also is called orthogonal.
This notion of orthogonality is due to Singer, dates back to 1962~\cite{Singer3} and has a geometric meaning in view of the notion of Birkhoff-orthogonality~\cite{Birkhoff},~\cite{Singer3}, see also~\cite{Singer1}.
However, in view of the proof of Proposition~\ref{schauder basis}, notions of Grinblyum-James orthogonality and Singer orthogonality are distinct, since every Singer-orthogonal basis is unconditional~\cite{Singer3} and unconditional bases do not exist in $C[a,b]$.
Nevertheless a basis is Singer-orthogonal if and only if every its permutation is Grinblyum-James orthogonal~\cite{Singer1}. So Singer-orthogonality is much stronger, then
Grinblyum-James orthogonality.
On the other hand by Theorem~\ref{orthogonal} each Schauder-Orlicz decomposition is Singer-orthogonal.
However, the whole picture of mutual relations of this two concepts of orthogonality and the concept of Schauder-Orlicz decomposition looks not complete.

4) Clearly if $\{\mathfrak{M}_n\}_{n=0}^{\infty}$ is a Schauder-Orlicz decomposition in $X$ with an Orlicz function $\Phi$, then $\{S \mathfrak{M}_n\}_{n=0}^{\infty}$, where $S:X\mapsto X$ is an isomorphism, is an $\ell_{\Phi}$-decomposition in $X$.
However, does every space with $\ell_{\Phi}$-decomposition has a Schauder-Orlicz decomposition?
The space $C[0,1]$ looks as the key pointer here, since it has a $c_0$-decomposition~\cite{Singer2}, but the construction of Schauder-Orlicz decomposition in  $C[0,1]$ does not look easy.
Taking into account Theorems~\ref{uncond} and~\ref{nonexistenceC}, the first step may be the construction of unconditional Schauder decomposition in  $C[0,1]$ with constant $1,$
each subspace of which is infinite dimensional.

5) One can prove, applying Theorem 15.3 from~\cite{Burkholder} (see also~\cite{Johnson2}) and arguments from the proof of Theorem~\ref{best},
that every monotone Schauder decomposition in $L_p$, $1< p <\infty$, is unconditional with constant
$$M_p=\frac12\max\left\{p, \frac{p}{p-1} \right\}.$$
Combining this fact with Theorem~\ref{best} we have that any Grinblyum-James orthogonal basis in $L_p$, $1< p <\infty$, is unconditional with constant $M_p$
and this constant is the best possible.
Therefore it is natural to ask about the best possible unconditional constant of any unconditional FDD in $L_p$, $1<p\neq 2$, in an intermediate situation between Grinblyum-James orthogonal unconditional bases in $L_p$ with best possible constant $M_p$ and slicing Schauder-Orlicz decompositions with unconditional constant $1$.

6) Does the Lorentz space $L_{w, p} \left([0,1], \mu \right)$, where $1\leq p\neq 2$  and $w(t)\neq 1$, have a Schauder-Orlicz decomposition?
By Theorem~\ref{nonexistenceX} it may have only Schauder-Orlicz decompositions, each subspace of which is infinite dimensional.
The same question concerns non-atomic non-commutative $L_p$-spaces, where $1 \leq p\neq 2$, and spaces of operators on $\ell_p$, considered in~\cite{Oikberg2}.

\section*{Acknowledgments}
The author is grateful to Prof. Vladimir Kadets for the discussion during the
International Conference in Functional Analysis dedicated to $125^{th}$ anniversary
of S.~Banach in Lviv and pointing out paper~\cite{Benyamini}.

\section*{Data availability}
This manuscript has no associated data.

\section*{Conflict of interest}
The author state that there is no conflict of interest.




\bibliographystyle{elsarticle-num}
\bibliography{SD}





\end{document}